\DeclareMathAlphabet{\mathpzc}{OT1}{pzc}{m}{it}
\definecolor{darkred}{rgb}{.0,.0,.8}
\definecolor{darkgreen}{rgb}{.0,0,0.8}
\definecolor{darkredd}{rgb}{0,0,.8}
\numberwithin{equation}{section}
\newtheorem{proposition}{Proposition}[section]
\newtheorem{remark}{Remark}[section]
\newtheorem{definition}{Definition}[section]
\newtheorem{lemma}{Lemma}[section]
\newtheorem{theorem}{Theorem}[section]
\newtheorem{corollary}{Corollary}[section]
\newtheorem{obs}{Observation}[section]
\newcommand{\N}{\ensuremath{{\mathbb{N}} }}
\newcommand{\<}{\langle}
\renewcommand{\>}{\rangle}
\newcommand{\ga}{\gamma}
\newcommand{\eps}{\epsilon}
\newcommand{\si}{\sigma}
\newcommand{\al}{\alpha}
\newcommand{\rr}{{\mathbb R}}
\newcommand{\LL}{\Lambda}
\newcommand{\ff}{\varphi}
\newcommand{\vff}{\vec{\varphi}}
\newcommand{\vp}{\vec{\psi}}
\newcommand{\bvp}{B_{\vec{\psi}}}
\newcommand{\vdu}{\vec{u}_0}
\newcommand{\vu}{\vec{u}}
\newcommand{\what}{\widehat}
\newcommand{\dd}{\;{\rm d}}
\newcommand{\ee}{{\rm e}}
\newcommand{\ii}{{\rm i}}
\newcommand{\hb}{ \mathbb{H}}
\newcommand{\kk}{\mathscr{K}}
\newcommand{\kb}{\mathbb{K}}
\newcommand{\n}{\mathscr{N}}
\newcommand{\ddd}{\mathscr{D}}
\newcommand{\p}{\mathscr{P}}
\newcommand{\uu}{\mathscr{U}}
\newcommand{\vtt}{\vartheta}
\newcommand{\g}{\mathscr{G}(\beta,c)}
\newcommand{\vg}{\vec{\mathscr{G}}(\beta,c)}
\newcommand{\x}{\mathscr{X}}
\newcommand{\lt}{{L^2(\mathbb{R})}}
\newcommand{\para}{\hspace*{0.25in}}
\newcommand{\fim}{\hfill$\square$\vskip 20pt}
\newcommand{\q}{\quad}
\newcommand{\proof}{\noindent\textbf{Proof.}\quad}
\author{{\bf Amin Esfahani}\\  {\small School of Mathematics and Computer Science} \\
{\small Damghan University}\\ {\small Damghan, Postal Code 36716-41167, Iran}\\
{\small  E-mail: amin@impa.br, saesfahani@du.ac.ir}\vspace{2mm}\\
{\bf Steven Levandosky}\\
{\small Mathematics and Computer Science Department}\\
{\small College of the Holy Cross, Worcester, MA 01610} \\
{\small  E-mail: spl@mathcs.holycross.edu}}
\title{Stability of solitary waves for the generalized higher-order Boussinesq equation
\footnotetext{Mathematical subject classification: 35Q35, 76B55, 76U05, 76B25, 35B35}
\footnotetext{Keywords: Boussinesq equation, solitary waves, stability}}
\date{}
\begin{document}
\maketitle

\begin{abstract}
This work studies the stability of solitary waves of a class of sixth-order Boussinesq equations.
\end{abstract}

\section{Introduction}\label{S:intro}

In this work we study the generalized sixth-order Boussinesq (GSBQ) equation \cite{cmv,esfahani_farah,esfahani_farah_wang}
\begin{equation}\label{E:B6}
u_{tt} = u_{xx} + \beta u_{xxxx} + u_{xxxxxx} - (f(u))_{xx}
\end{equation}
where $f\in C^2$ is homogeneous of degree $p\geq2$. Neglecting the sixth-order term, equation \eqref{E:B6} becomes a generalization of the classical
Boussinesq equations
\begin{equation}\label{E:B4}
u_{tt} = u_{xx} + \beta u_{xxxx} - (f(u))_{xx}, \qquad\beta=\pm1,
\end{equation}
Equation \eqref{E:B4} was originally derived by Boussinesq \cite{boussinesq} in his study of nonlinear, dispersive wave propagation. We should remark that it was the first equation proposed in the literature to describe this kind of physical phenomena. Equation \eqref{E:B4} was also used by Zakharov  \cite{zakharov} as a model of nonlinear string and by Falk \emph{et al} \cite{fls} in their study of shape-memory alloys.

When $\beta = 1$, equation\eqref{E:B4} is called ``bad" Boussinesq equation, while \eqref{E:B4} with $\beta=-1$,
\begin{equation}\label{E:B4_good}
u_{tt} = u_{xx} - u_{xxxx} - (f(u))_{xx},
\end{equation}
is called ``good" Boussinesq equation. Given certain conditions on $f$, \eqref{E:B4_good} possesses special
traveling-wave solutions with finite energy. Indeed, \eqref{E:B4_good} can be written as the system of equations
\begin{equation}\label{E:B4_system}
\begin{split}
  u_t &= v_x\\
  v_t &= (u - u_{xx} - f(u))_x
\end{split}
\end{equation}
By a solitary wave solution of \eqref{E:B4_system}, we mean a traveling-wave solution of the form
$\vec\varphi(x-ct)$, vanishing at infinity, where $c $ is the speed of wave propagation. It was shown in \cite{bona-sachs,liu1} that these solutions are of the form $\vec\varphi= (\varphi,-c\varphi)$ so that they must satisfy
\begin{equation}
(1- c^2)\ff -\ff'' - f(\ff) = 0.
\end{equation}

Bona and Sachs in \cite{bona-sachs} proved that the solitary waves of \eqref{E:B4_system} are stable under an appropriate convexity condition.  Liu \cite{liu1,liu2} showed the nonlinear instability of solitary waves of \eqref{E:B4_system}. His proof was based on a modification of the general argument of \cite{gss}.

Equation \eqref{E:B6} can be also written as the following system of equations
\begin{equation}\label{E:B6_system}
\begin{split}
  u_t &= v_x\\
  v_t &= (u +\beta u_{xx}+u_{xxxx} - f(u))_x
\end{split}
\end{equation}
If we put the solitary wave form $\ff(x - ct)$ into \eqref{E:B6}, we obtain
\begin{equation}\label{E:B6_solitary}
    (1-c^2)\ff+\beta\ff''+\ff''''-f(\ff)=0.
\end{equation}
It is worth noting that the solitary wave solutions of equation \eqref{E:B6_solitary} have been investigated numerically and the two classes of subsonic solutions corresponding to the sign of $\beta$ have been obtained, more precisely, the monotone shapes and the shapes with oscillatory tails \cite{cmv}.

The system \eqref{E:B6_system} has the conserved quantities
\begin{align}
  E(u,v)&=\int_\rr \frac12(u_{xx}^2-\beta u_x^2+u^2+v^2)-F(u)\dd x\\
  Q(u,v)&=\int_\rr uv\dd x
\end{align}
We also note that, at least formally, the quantity
\[
\int_\rr u\partial_x^{2k}v\dd x
\]
is conserved for any positive integer $k$. If $\vff$ is a solution of the solitary wave equation \eqref{E:B6_solitary}, then $\vec\ff=(\ff,-c\ff)$ satisfies
    \[
    E'(\vff)+cQ'(\vff)=\vec 0,
    \]
so solitary waves are critical points of the action
\begin{equation}\label{E:action}
  L(u,v)=E(u,v)+cQ(u,v).
\end{equation}
Our aim here is to study the stability of solitary waves of \eqref{E:B6}.

This paper is organized as follows. In Section \ref{S:existence}, we consider the properties of ground state solitary wave solutions. The solitary wave equation \eqref{E:B6_solitary} is a fourth-order elliptic equation, and is identical, after a rearrangement of parameters, to the solitary wave equation that arises in the study of the fifth-order KdV equation. The variational, regularity, and decay properties of this equation were considered in \cite{levandosky2}, so we refer to this work for several results. In Section \ref{S:stability} we prove the main stability result, Theorem \ref{T:stability}, which states that the set of ground state solitary waves is stable if $d''(c)>0$, where $d$ is defined by equation \eqref{E:d_definition}. In Section \ref{S:instability} we prove the main instability result, Theorem \ref{T:instability}, which states that a given ground state is orbitally unstable if there exists an ``unstable direction''. In Theorem \ref{T:instability_d} we show that such an unstable direction exists provided $d''(c)<0$. Using a different choice of unstable direction, we also derive in Theorem \ref{T:instability_criteria} explicit conditions on $p$, $\beta$ and $c$ that imply orbital instability. Section \ref{S:d_properties} is devoted to establishing further properties of the function $d$. We first show that when $f(u)=|u|^{p-1}u$ for $p<5$, there exist $c$ near $c_*$ such that $d''(c)>0$. See Theorem \ref{T:d_bound} and Corollary \ref{C:stability}. We then derive in Theorem \ref{T:d_scaling} the main scaling identity satisfied by $d$, and use it to prove that $d''(c)$ may change sign at most once along each semi-ellipse in the $(\beta,c)$-plane. Finally, in Section \ref{S:numerical}, we outline the numerical method used to compute the function $d$, and present the results of these numerical calculations. The main conclusions that can be drawn from these results are found in Observation \ref{O:numerical_results}.

\section*{Notations}
For each $r\in\rr$, we define the translation operator by $\tau_ru= u(\cdot+r)$.

Given a solitary wave $\vff$ of \eqref{E:B6_system},
the orbit of $\vff$ is defined by the set $\mathcal{O}_{\vff} = \{\tau_r\vff;\;r\in\rr\}$.

We shall denote by $\widehat{g}$ the Fourier transform of $g$, defined as
\[
\widehat{g}(\zeta)=\int_{\rr}\;g(\omega)\ee^{-\ii \omega\cdot\zeta}\;\dd\omega.
\]
For $s\in\rr$ and $1\leq p\leq\infty$, we denote by $H^{s,p}(\rr)$, the Bessel potential space defined by $H^{s,p}(\rr)=\Lambda^{-s}L^p(\rr)$,
with respect to the norm
\[
\|g\|_{H^{s,p}(\rr)}
=\|\Lambda^sg\|_{L^p(\rr)},
\]
where $\LL^s=(I-\partial_x^2)^{s/2}$.
In particular, we define  the nonhomogeneous Sobolev space $H^s(\rr)=H^{s,2}(\rr)$.
Let $\x$ be the space defined by
\[
\x=H^2(\rr)\times L^2(\rr),
\]
 with the norm
 \[
{ \|\vu\|}_\x={ \|(u,v)\|}_\x={\|u\|}_{H^2(\rr)}+{\|v\|}_{L^2(\rr)}.
 \]

For any positive numbers $a$ and $b$, the notation $a \lesssim b$ means that there exists a
positive (harmless) constant $\mathpzc{k}$ such that $a\leq\mathpzc{k}  b$. We also use $a\sim b$ when $a\lesssim b$ and $b\lesssim a$.

\section{Existence of Solitary Waves}\label{S:existence}

Solutions of the solitary wave equation \eqref{E:B6_solitary} may be shown to exist via the following variational problem. Define
\begin{align}
  I(u)&=\int_\rr u_{xx}^2-\beta u_x^2+(1-c^2)u^2\dd x\\
  K(u)&=(p+1)\int_\rr F(u)\dd x
\end{align}
where $F'=f$ and $F(0)=0$. When $c^2<1$ and $\beta<\beta_*=2\sqrt{1-c^2}$ (equivalently when $\beta<2$ and $c^2<c_\ast^2$, where $c_\ast=\sqrt{1-\beta_+^2/4}$ and $\beta_+=\max\{\beta,0\}$), the functional $I$ is coercive in the sense that
    \begin{equation}\label{E:I_coercive}
    I(u)\geq C(\beta,c)\|u\|_{H^2(\rr)}^2
    \end{equation}
where
    \[
    C(\beta,c)>\left\{
    \begin{array}{cl}
      1-c^2&\beta\leq 0\\
      1-c^2-\frac12\beta\sqrt{1-c^2}&\beta>0
    \end{array}
    \right\}>0.
    \]
Since $K(u)\leq C\|u\|_{H^2(\rr)}^{p+1}$, it follows that for $\lambda>0$ we have
    \[
    M_\lambda=\inf\{I(u)\mid u\in H^2(\rr), K(u)=\lambda\}>0.
    \]
We say that a sequence $u_k$ is a {\em minimizing sequence} if $K(u_k)\to\lambda>0$ and $I(u_k)\to M_\lambda$. The following result is a consequence of the concentration-compactness theorem, and was shown in \cite{levandosky2} for a more general class of homogeneous nonlinearities (see also \cite{esfahani_levandosky,kara-mc}).
\begin{theorem}\label{T:existence}
Fix $p>1$. Suppose $c^2<1$ and $\beta<\beta_*$. If $u_k$ is a minimizing sequence for some $\lambda>0$, then there exists a subsequence $u_{k_j}$, scalars $y_j$ and $\psi\in H^2(\rr)$ such that $u_{k_j}(\cdot-y_j)\to\psi$ in $H^2(\rr)$.
\end{theorem}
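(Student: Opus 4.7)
The plan is to apply Lions' concentration-compactness principle to the minimizing sequence. By the coercivity estimate \eqref{E:I_coercive}, $I(u_k)\to M_\lambda$ implies that $\{u_k\}$ is bounded in $H^2(\rr)$. I would normalize by applying the trichotomy to the non-negative densities $\rho_k(x) = |u_k(x)|^2 + |u_k'(x)|^2 + |u_k''(x)|^2$, whose integrals $\int \rho_k = \|u_k\|_{H^2}^2$ are bounded and, along a subsequence, converge. After passing to a subsequence, one of the three alternatives (vanishing, dichotomy, compactness) must hold.

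First I would rule out vanishing. If $\sup_y \int_{y-R}^{y+R} \rho_k \to 0$ for every $R>0$, then by a standard lemma the bounded $H^2$ sequence satisfies $\|u_k\|_{L^q}\to 0$ for every $q\in(2,\infty)$. Since $f$ is homogeneous of degree $p$, $|F(u)|\lesssim |u|^{p+1}$, and thus $K(u_k)\to 0$, contradicting $K(u_k)\to\lambda>0$.

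Next I would rule out dichotomy via strict subadditivity of $\lambda\mapsto M_\lambda$. Because $F$ is homogeneous of degree $p+1$, the rescaling $u\mapsto \theta^{1/(p+1)}u$ gives $K(\theta^{1/(p+1)}u)=\theta K(u)$ and $I(\theta^{1/(p+1)}u)=\theta^{2/(p+1)}I(u)$, so
\[
M_{\theta\lambda}=\theta^{2/(p+1)}M_\lambda.
\]
Since $p>1$ forces $2/(p+1)<1$, the map $\lambda\mapsto M_\lambda$ is strictly subadditive, and the usual cut-off argument (splitting $u_k$ into two pieces supported in disjoint half-lines and controlling cross terms coming from the $u_{xx}^2$ and $u_x^2$ densities) then contradicts $M_\lambda$ being the infimum if dichotomy were to occur.

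Therefore compactness holds: there exist $y_k\in\rr$ and, for every $\varepsilon>0$, some $R=R(\varepsilon)>0$ such that the translates $\ti u_k=u_k(\cdot-y_k)$ satisfy $\int_{|x|>R}\rho_k(\cdot-y_k)\,dx<\varepsilon$. Weak $H^2$-compactness yields $\ti u_{k_j}\rightharpoonup\psi$ weakly, and Rellich on $[-R,R]$ combined with tightness promotes this to strong convergence $\ti u_{k_j}\to\psi$ in $L^q(\rr)$ for all $q\in[2,\infty)$. Hence $K(\psi)=\lambda$, so $\psi$ is admissible and $I(\psi)\geq M_\lambda$; weak lower semicontinuity gives the reverse inequality, so $I(\psi)=M_\lambda$. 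Since $I$ is equivalent to $\|\cdot\|_{H^2}^2$ by \eqref{E:I_coercive}, the convergence $I(\ti u_{k_j})\to I(\psi)$ plus weak convergence upgrades to strong $H^2$-convergence by the parallelogram identity, completing the proof. The main technical obstacle is the exclusion of dichotomy: because the density $u_{xx}^2-\beta u_x^2+(1-c^2)u^2$ is not pointwise non-negative when $\beta>0$, care is required in the cut-off step to estimate the interaction terms, which is precisely where coercivity \eqref{E:I_coercive} and the subadditivity identity above are used jointly.
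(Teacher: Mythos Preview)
Your proposal is correct and follows the standard concentration--compactness route that the paper itself invokes: the paper does not supply its own proof of this theorem but simply cites \cite{levandosky2} (and related works), where exactly the Lions trichotomy argument you outline---with vanishing excluded via $K(u_k)\to\lambda>0$, dichotomy excluded via the homogeneity identity $M_{\theta\lambda}=\theta^{2/(p+1)}M_\lambda$, and compactness upgraded to strong $H^2$ convergence by weak lower semicontinuity of the coercive quadratic form $I$---is carried out. Your sketch matches that approach in all essential respects.
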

Since the function $\psi$ achieves the minimum $M_\lambda$ it satisfies the Euler-Lagrange equation
\[
(1-c^2)\psi+\beta\psi''+\psi''''=\mu f(\psi),
\]
for some multiplier $\mu$. Multiplying this equation by $\psi$ and integrating over $\rr$, it follows that
$M_\lambda=I(\psi)=\mu(p+1)\lambda$, so $\mu>0$. Thus $\varphi=\mu^{1/(p-1)}\psi$ is a solution of the solitary wave equation \eqref{E:B6_solitary}. Such solutions are referred to as {\em ground states} and, by the homogeneity of $F$, achieve the minimum
\[
m(\beta,c)=\inf\left\{\frac{I(u)}{K(u)^{2/(p+1)}}: u\in H^2(\rr),u\neq0\right\}.
\]
The set of all ground states will be denoted by $\g$. Multiplying the solitary wave equation \eqref{E:B6_solitary} by $\ff$ and integrating gives $I(\ff)=K(\ff)$. Thus the set of ground states is given by
    \begin{equation}\label{E:ground_state_definition}
    \g=\{\ff\in H^2(\rr): I(\ff)=K(\ff)=m(\beta,c)^{\frac{p+1}{p-1}}\}.
    \end{equation}
We shall denote
    \[
    \vg=\{\vff=(\ff,-c\ff)\in\x:\ff\in\g\}.
    \]
As mentioned in the introduction, elements of $\vg$ are critical points of the action $L$ defined by \eqref{E:action}. In fact, elements of $\vg$ are minimizers of $L$ subject to the constraint $P=0$, where
\begin{equation}\label{E:P_definition}
  P(\vec w)=\<L'(\vec w),\vec w\>.
\end{equation}
\begin{theorem}\label{ground}
 Suppose $\beta<\beta_\ast$ and $c^2<1$.
 Let
\begin{equation}\label{E:N_definition}
  \mathscr{N}=\{\vec w\in\x;\;\vec w\neq\vec0,\;P(\vec w)=0\}.
\end{equation}
 The following are equivalent.
 \begin{enumerate}[(i)]
   \item $\vff\in\vg$.
   \item $\vff\in\mathscr{N}$ and $L(\vff)=\inf\{L(\vec w):\vec w\in\mathscr{N}\}$.
 \end{enumerate}
\end{theorem}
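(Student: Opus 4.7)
The plan is to reduce both $L$ and $P$ to expressions depending only on $I(u)$, $K(u)$, and $\|v+cu\|_{\lt}$. Using the orthogonal-type decomposition $u^2+v^2+2cuv=(1-c^2)u^2+(v+cu)^2$, together with the homogeneity identity $\int_{\rr}uf(u)\dd x=(p+1)\int_{\rr}F(u)\dd x=K(u)$, a short computation from the definition \eqref{E:P_definition} would give
\[
P(\vec w)=I(u)+\|v+cu\|_{\lt}^2-K(u),\qquad L(\vec w)=\tfrac12 I(u)+\tfrac12\|v+cu\|_{\lt}^2-\tfrac{1}{p+1}K(u).
\]
In particular, on $\mathscr{N}$ the constraint $P(\vec w)=0$ gives $K(u)=I(u)+\|v+cu\|_{\lt}^2$, whence $L(\vec w)=\tfrac{p-1}{2(p+1)}K(u)$ for every $\vec w=(u,v)\in\mathscr{N}$. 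This single identity is the engine of both implications.

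For (i)$\Rightarrow$(ii), a direct substitution shows that $\vff=(\ff,-c\ff)$ with $\ff\in\g$ yields $v+cu\equiv 0$ and, by \eqref{E:ground_state_definition}, $I(\ff)=K(\ff)=m(\beta,c)^{(p+1)/(p-1)}$, so $\vff\in\mathscr{N}$ and $L(\vff)=\tfrac{p-1}{2(p+1)}m(\beta,c)^{(p+1)/(p-1)}$. For an arbitrary $\vec w=(u,v)\in\mathscr{N}$ I would first rule out $u=0$, which is impossible since $P(0,v)=\|v\|_{\lt}^2$ would then force $v=0$, contradicting $\vec w\neq\vec 0$. By the coercivity \eqref{E:I_coercive}, $I(u)>0$; hence $K(u)=I(u)+\|v+cu\|_{\lt}^2>0$, and combining the definition $I(u)\geq m(\beta,c)K(u)^{2/(p+1)}$ of $m(\beta,c)$ with the inequality $K(u)\geq I(u)$ yields $K(u)\geq m(\beta,c)^{(p+1)/(p-1)}$. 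Therefore $L(\vec w)\geq L(\vff)$, establishing (ii).

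Conversely, Theorem \ref{T:existence} ensures $\vg\neq\emptyset$, so the previous step pins down $\inf_{\mathscr{N}}L=\tfrac{p-1}{2(p+1)}m(\beta,c)^{(p+1)/(p-1)}$. If $\vff_0=(u_0,v_0)\in\mathscr{N}$ attains this infimum, then every inequality in the preceding chain must be an equality, forcing $\|v_0+cu_0\|_{\lt}=0$ together with $I(u_0)=K(u_0)=m(\beta,c)^{(p+1)/(p-1)}$. The first identity gives $v_0=-cu_0$ and the second places $u_0$ in $\g$, so $\vff_0\in\vg$. The whole argument is essentially algebraic; the only mild obstacles are the careful handling of the degenerate case $u=0$ in $\mathscr{N}$ and the implicit use of Theorem \ref{T:existence} to actually identify the value of the infimum.
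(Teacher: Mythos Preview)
Your argument is correct and, in fact, somewhat cleaner than the paper's. Both proofs begin from the same two identities
\[
L(u,v)=\tfrac12 I(u)-\tfrac1{p+1}K(u)+\tfrac12\|v+cu\|_{\lt}^2,\qquad
P(u,v)=I(u)-K(u)+\|v+cu\|_{\lt}^2,
\]
and the consequence $L|_{\mathscr{N}}=\tfrac{p-1}{2(p+1)}K$. The routes then diverge. For (i)$\Rightarrow$(ii) the paper rescales an arbitrary $\vec w\in\mathscr{N}$ by $\alpha=(K(\ff)/K(u))^{1/(p+1)}$ so that $K(\tilde u)=K(\ff)$, and compares $I$-values to force $\alpha\le1$; you instead combine the defining inequality $I(u)\ge m(\beta,c)K(u)^{2/(p+1)}$ directly with $K(u)\ge I(u)$ on $\mathscr{N}$ to obtain $K(u)\ge m(\beta,c)^{(p+1)/(p-1)}$ in one line. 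For (ii)$\Rightarrow$(i) the paper introduces an auxiliary constrained minimizer of $I$, its Euler--Lagrange multiplier $\mu$, and squeezes $\mu=1$ via two opposite inequalities; you simply invoke Theorem~\ref{T:existence} to know $\vg\neq\emptyset$, read off the value of the infimum from the forward direction, and force equality throughout the chain $K(u_0)\ge I(u_0)\ge m(\beta,c)K(u_0)^{2/(p+1)}$. Your version is shorter and avoids both the rescaling device and the multiplier analysis; the paper's version is more self-contained in that its (ii)$\Rightarrow$(i) does not appeal to the already-proved forward implication or to the existence theorem to identify the infimum.
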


\proof The identities that we shall need relating the two variational problems are
\begin{equation}\label{E:LIK}
  L(u,v)=\frac12I(u)-\frac1{p+1}K(u)+\frac12\int_\rr(cu+v)^2\dd x
\end{equation}
and
\begin{equation}\label{E:PIK}
  P(u,v)=I(u)-K(u)+\int_\rr(cu+v)^2\dd x.
\end{equation}
From this it follows that, for any $(u,v)\in\mathscr{N}$, we have $L(u,v)=\frac{p-1}{2(p+1)}K(u)$.

First suppose $\vff\in\g$. Then by definition $I(\ff)=K(\ff)$, so $P(\vff)=0$ and thus
$\vff\in\mathscr{N}$. Denote $\lambda=K(\ff)$.
Then $I(\ff)$ minimizes $I(u)$ over all $u\in H^2(\rr)$ such that $K(u)=\lambda$. Now let $\vec w=(u,v)\in\mathscr{N}$. Then $K(u)>0$, so if we set $\tilde{u}=\alpha u$ where $\alpha=(K(\ff)/K(u))^{\frac1{p+1}}$, then $K(\tilde{u})=K(\ff)$ and consequently $I(\ff)\leq I(\tilde{u})$. Therefore
    \[
    0=P(\ff)=I(\ff)-K(\ff)\leq I(\tilde{u})-K(\tilde{u})=\alpha^2I(u)-\alpha^{p+1}K(u)=\alpha^2(1-\alpha^{p-1})I(u),
    \]
which implies $\alpha\leq 1$. Thus $K(\ff)\leq K(u)$, and it follows that
    \[
    L(\vff)=\frac{p-1}{2(p+1)}K(\ff)\leq \frac{p-1}{2(p+1)}K(u)=L(\vec w).
    \]
Hence (i) implies (ii).

Next suppose $\vff=(\ff,\psi)\in\mathcal{N}$ solves the minimization problem. We need to show that $\ff\in\g$ and $\psi=-c\ff$. Denote $\lambda=K(\ff)>0$ and suppose $u\in H^2(\rr)$ minimizes $I$ subject to the constraint $K(\cdot)=\lambda$. Then
    \[
    u_{xxxx}+\beta u_{xx}+(1-c^2)u=\mu f(u)
    \]
for some $\mu$. Multiplying by $u$ and integrating gives $I(u)=\mu K(u)=\mu\lambda$. Since
    \begin{equation}\label{E:IK_inequality}
      I(u)\leq I(\ff)
      =K(\ff)-\int_\rr(c\ff-\psi)^2\dd x
        \leq K(\ff)\\
      =\lambda,
   \end{equation}
we have $\mu\leq 1$. On the other hand, if we set $\tilde{u}=\mu^{\frac1{p-1}}u$, then $I(\tilde u)=K(\tilde u)$ so if we define $\vec w=(\tilde u,-c\tilde u)$ then we have $\vec w\in\mathscr{N}$. Therefore $L(\vff)\leq L(\vec w)$. Since $\vff\in\mathscr{N}$ we have $L(\vff)=\frac{p-1}{2(p+1)}K(\ff)$ and thus
\begin{align*}
  \frac{p-1}{2(p+1)}K(\ff)&=L(\vff)\\
  &\leq K(\vec w)\\
  &=\frac12I(\tilde u)-\frac1{p+1}K(\tilde u)\\
  &=\frac{p-1}{2(p+1)}I(\tilde u)\\
  &=\frac{p-1}{2(p+1)}\mu^{\frac2{p-1}}I(u)\\
  &\leq \frac{p-1}{2(p+1)}\mu^{\frac2{p-1}}I(\ff)\\
  &\leq \frac{p-1}{2(p+1)}\mu^{\frac2{p-1}}K(\ff).\\
\end{align*}
It then follows that $\mu\geq1$ and thus $\mu=1$. This implies $I(u)=K(u)=\lambda$. But \eqref{E:IK_inequality} then implies that $I(\ff)=K(\ff)=\lambda$ and $\psi=-c\ff$, so we have $\ff\in\g$ and therefore $\vff\in\vg$. This completes the proof.
\fim

As shown in \cite{levandosky2}, solitary waves have the following regularity and decay properties.
\begin{theorem}\label{T:regularity_decay} Suppose $\ff\in H^2(\rr)$ is a weak solution of
\eqref{E:B6_solitary} and that $f\in C^k(\rr)$. Then $\ff$ is a classical solution and
$\ff\in C^{k+4}(\rr)$. Furthermore, $\ff$ decays exponentially as
$|x|\to\infty$.
\end{theorem}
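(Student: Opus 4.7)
The plan is to establish regularity by an ODE bootstrap applied to \eqref{E:B6_solitary} and to derive exponential decay by writing the equation as a first-order hyperbolic system in $\rr^4$ and appealing to the stable-manifold theorem (alternatively, via a convolution argument against an exponentially decaying Green's function). For the regularity step, I would begin from the one-dimensional Sobolev embedding $H^2(\rr)\hookrightarrow C^1_b(\rr)$, which already gives $\ff$ bounded and continuously differentiable. Rewriting \eqref{E:B6_solitary} in the form
\[
\ff''''=f(\ff)-\beta\ff''-(1-c^2)\ff,
\]
the right-hand side lies in $L^2(\rr)$, since $f$ homogeneous of degree $p\geq 2$ forces $|f(\ff)|\lesssim|\ff|^p$ with $\ff\in L^\infty\cap L^2$. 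This upgrades $\ff$ to $H^4(\rr)\hookrightarrow C^3_b(\rr)$. I would then iterate: if $\ff\in C^n(\rr)$ with $3\leq n<k+4$, the chain rule gives $f(\ff)\in C^{\min(n,k)}$ (since $f\in C^k$ and $\ff$ is bounded), so the right-hand side of the equation above lies in $C^{\min(n-2,k)}$, and hence $\ff\in C^{\min(n+2,k+4)}$. Iterating this step raises the regularity by two at a time until it saturates at $C^{k+4}(\rr)$, showing that $\ff$ is a classical solution of the claimed regularity.

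For the exponential decay, I would write \eqref{E:B6_solitary} as a first-order system $Y'=AY+N(Y)$ in $\rr^4$, with $Y=(\ff,\ff',\ff'',\ff''')^T$, $A$ the companion matrix of the linearization at $0$, and $|N(Y)|\lesssim|Y|^p$ near the origin. Under the hypothesis $\beta<\beta_*=2\sqrt{1-c^2}$, the discriminant $\beta^2-4(1-c^2)$ of the characteristic polynomial $\lambda^4+\beta\lambda^2+(1-c^2)$ is negative; a direct computation from $\lambda^2=\tfrac{-\beta\pm\ii\sqrt{4(1-c^2)-\beta^2}}{2}$ then shows that the four roots come in conjugate pairs $\pm a\pm\ii b$ with $a=a(\beta,c)>0$. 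Hence $A$ is hyperbolic with two-dimensional stable and unstable subspaces. Since $\ff\in H^2$ already implies $Y(x)\to 0$ as $|x|\to\infty$, the stable-manifold theorem applied at $+\infty$ and its unstable counterpart at $-\infty$ force $|Y(x)|\lesssim e^{-\delta|x|}$ for every $\delta<a$; combined with the regularity step, this yields exponential decay of $\ff$ and of all its derivatives up to order $k+4$.

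The main obstacle is the decay step, as the regularity argument is routine. One must carefully verify hyperbolicity of the linearization (which is precisely where the hypothesis $\beta<\beta_*$ is used: at equality one would find purely imaginary eigenvalues and the decay could fail) and then apply a quantitative version of the stable-manifold theorem to promote the qualitative statement ``$Y(x)\to 0$'' into an explicit exponential rate. The full execution of both arguments is carried out in \cite{levandosky2} for the equivalent fifth-order KdV solitary-wave equation, to which \eqref{E:B6_solitary} is identical after a rearrangement of parameters.
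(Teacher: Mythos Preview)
Your approach matches the paper's: the paper does not supply its own proof but simply cites \cite{levandosky2}, exactly as you do in your final sentence, and afterwards remarks that the same conclusions follow from the convolution representation $\ff=k\ast f(\ff)$ with the explicit kernel \eqref{E:kernel_definition}--\eqref{E:kernel_formulas}. Your bootstrap-plus-stable-manifold sketch is the standard argument and is correct in outline.

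There is, however, one slip in your eigenvalue analysis. The hypothesis $\beta<\beta_\ast=2\sqrt{1-c^2}$ does \emph{not} force the discriminant $\beta^2-4(1-c^2)$ to be negative; that requires $|\beta|<\beta_\ast$. For $\beta\leq-\beta_\ast$ (which is permitted) the discriminant is nonnegative, and the two values
\[
\lambda^2=\frac{-\beta\pm\sqrt{\beta^2-4(1-c^2)}}{2}
\]
are both real and strictly positive (since $-\beta>0$ and $\sqrt{\beta^2-4(1-c^2)}<-\beta$ when $c^2<1$), giving four real nonzero roots $\pm\lambda_1,\pm\lambda_2$. The linearization is therefore still hyperbolic and your stable-manifold argument goes through unchanged; this trichotomy is precisely what is recorded in the kernel formulas \eqref{E:kernel_formulas}. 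So the omission is easily repaired, but as written your case analysis covers only the regime $-\beta_\ast<\beta<\beta_\ast$.
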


It is noteworthy that regularity and decay properties of the solutions of \eqref{E:B6_solitary} can be obtained by using an argument similar to \cite{esfahani_levandosky} via the following equivalent form of \eqref{E:B6_solitary}
\[
\ff=k\ast f(\ff),
\]
where
\begin{equation}\label{E:kernel_definition}
\what{k}(\xi)=\frac{1}{\xi^4-\beta\xi^2+1-c^2},
\end{equation}
$c^2<1$ and $\beta<\beta_\ast=2\sqrt{1-c^2}$. Using the residue theorem, one obtains the following explicit expressions for $k$.
\begin{equation}\label{E:kernel_formulas}
\kb(x)=\left\{\begin{array}{lll}
\frac{\pi}{\lambda_2^2-\lambda_1^2}\left(\frac{1}{\lambda_1}\ee^{-\lambda_1|x|}-\frac{1}{\lambda_2}\ee^{-\lambda_2|x|}\right),&&\beta<-\beta_\ast,\\ \\
\frac{\pi\sqrt{2}}{\beta_\ast^{3/2}}\left(1+\sqrt{\frac{\beta_\ast}{2}}|x|\right)\;
\ee^{-\sqrt{\frac{\beta_\ast}{2}}|x|},&&\beta=-\beta_\ast,\\ \\
\frac{\pi e^{-\sigma|x|}}{2\sigma\omega(\sigma^2+\omega^2)}\left(\omega\cos(\omega x)+\sigma\sin(\omega|x|)\right),
&&\beta\in(-\beta_\ast,\beta_\ast),
\end{array}
\right.
\end{equation}
where
 \begin{equation}\label{E:lambda_sigma_omega}
 \begin{split}
 \lambda_1&=\sqrt{\frac12\left(-\beta-\sqrt{\beta^2-\beta_\ast^2}\right)}\\
 \lambda_2&=\sqrt{\frac12\left(-\beta+\sqrt{\beta^2-\beta_\ast^2}\right)}\\
 \sigma&=\frac12\sqrt{\beta_\ast-\beta}\\
\omega&=\frac12\sqrt{\beta_\ast+\beta}
\end{split}
\end{equation}

\begin{figure}
\begin{center}
  \scalebox{0.25}{\includegraphics{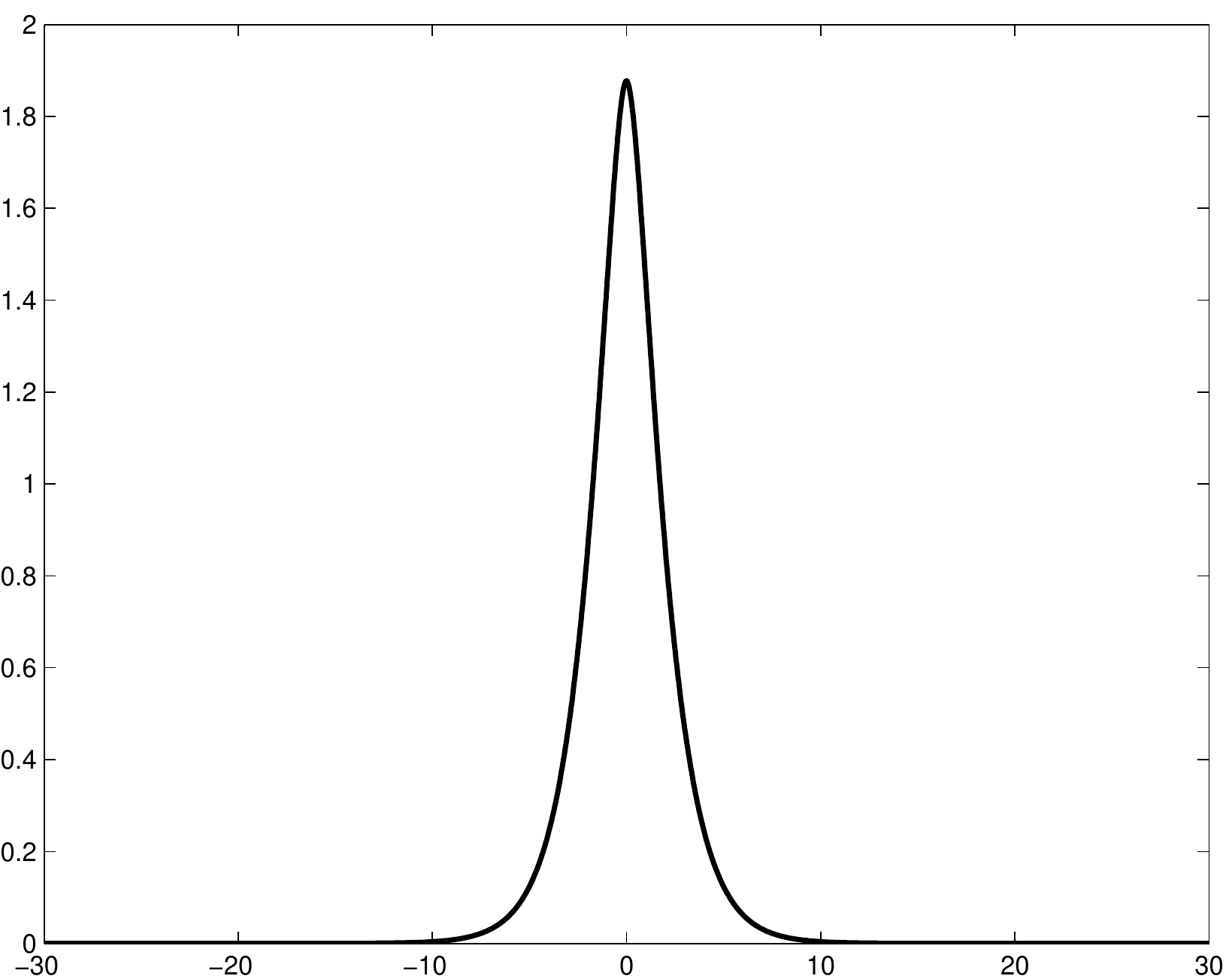}
  \includegraphics{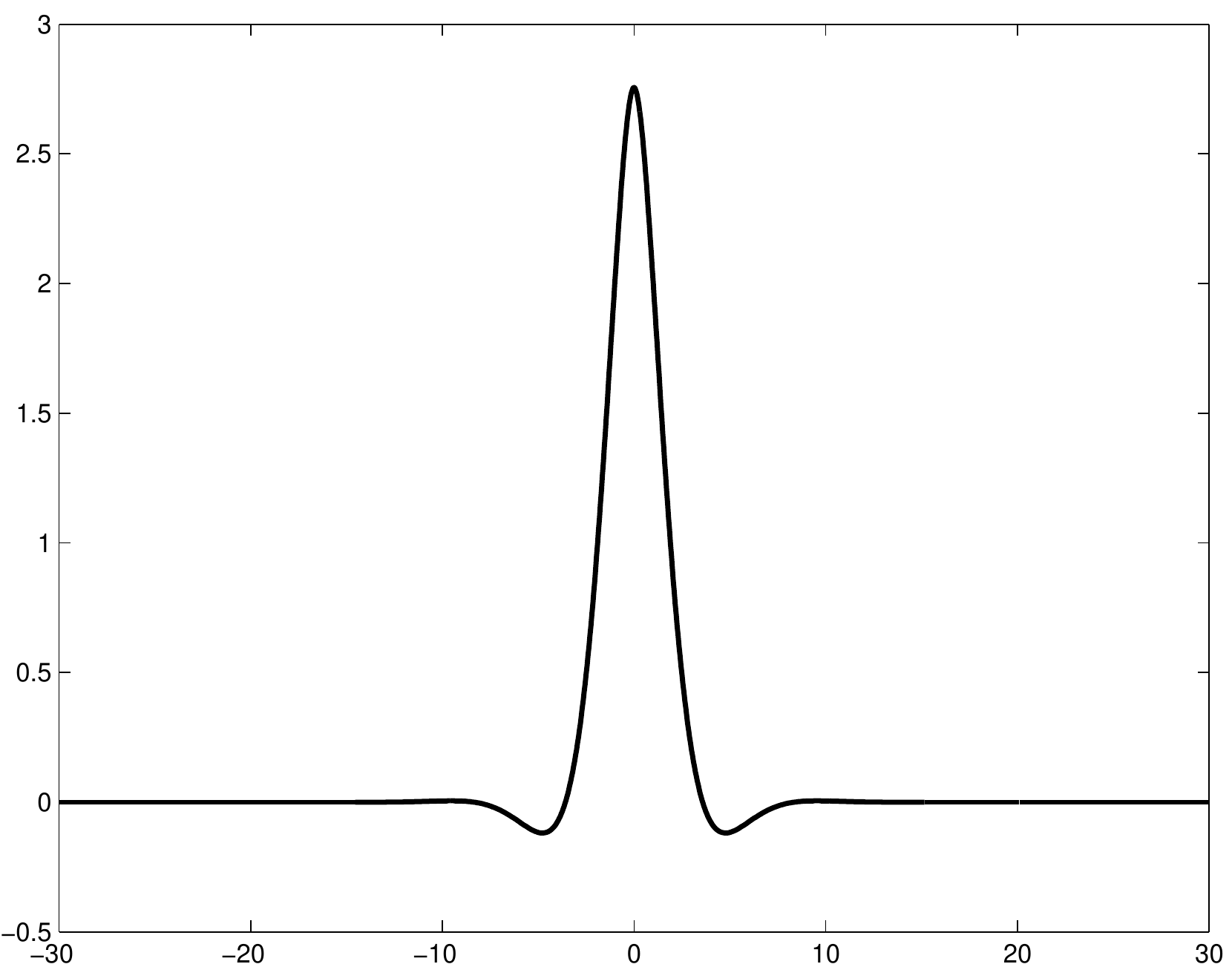}
  \includegraphics{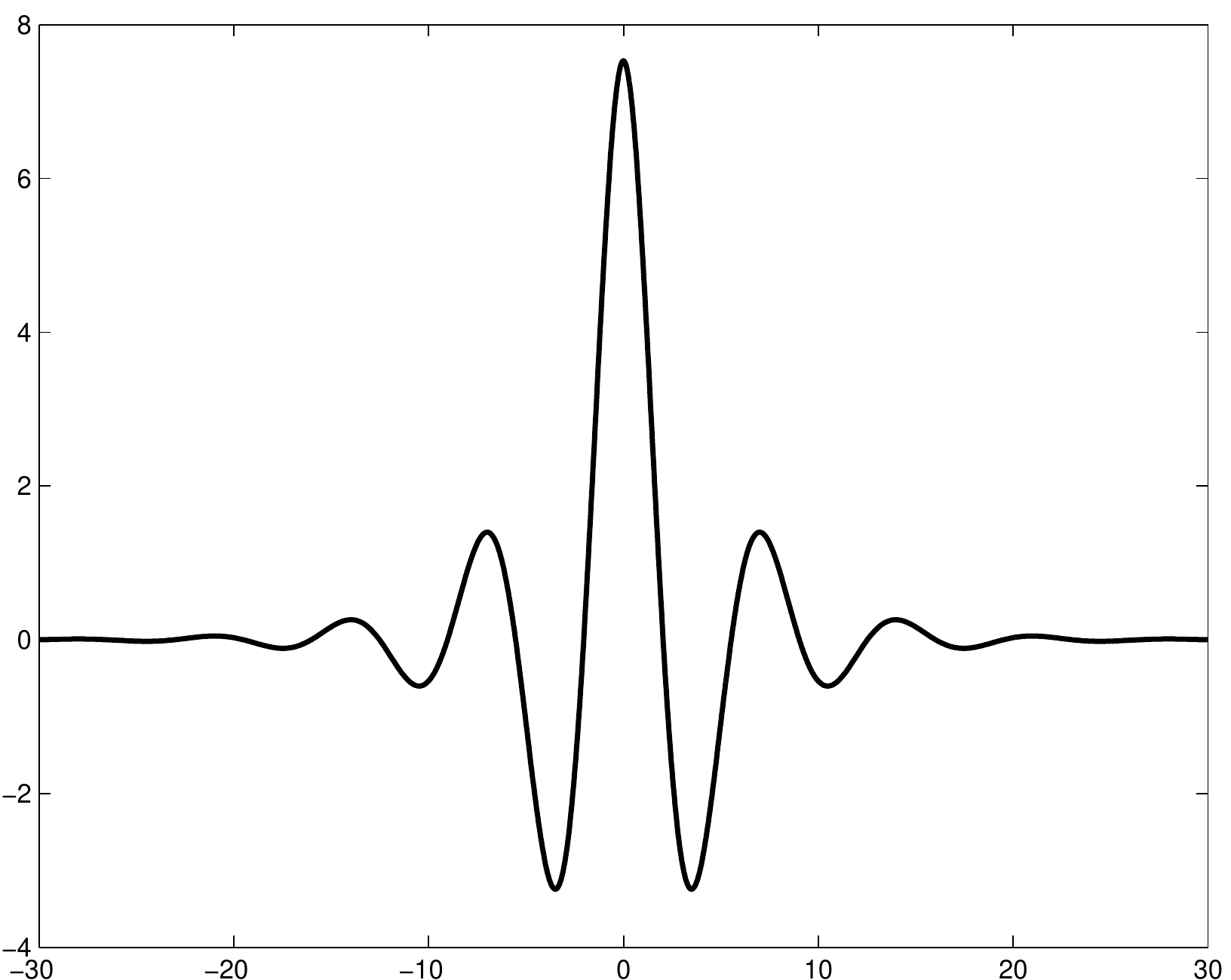}
}
\end{center}
\caption{The kernel $k$, shown here for $c=1/2$, and $\beta=-2$, $\beta=0$ and $\beta=1.5$.}
\end{figure}

One can observe that $k$ oscillates when $\beta\in(-\beta_\ast,\beta_\ast)$; contrary to the case $\beta\leq-\beta_\ast$.
The function $\kb$ may give us an intuition of the properties of the solutions of \eqref{E:B6_solitary}, and is useful in determining the behavior of the function $d$ (see \eqref{E:d_definition}) near the boundary of its domain.

\begin{theorem} There exist no solutions in $H^2(\rr)$ of equation \eqref{E:B6_solitary} if any of the following conditions hold.
\begin{enumerate}[(i)]
  \item $c^2\geq1$ and $\beta<\frac{2\sqrt{(3p+5)(p-1)(c^2-1)}}{p+3}$.
  \item $F(u)\geq0$ for all $u$, $c^2\geq1$ and $\beta\geq0$.
\end{enumerate}
\end{theorem}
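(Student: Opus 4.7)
The plan is to establish two integral identities for any putative $H^2(\rr)$ solution $\ff$ of \eqref{E:B6_solitary} and combine them to reach a contradiction in each case. By Theorem \ref{T:regularity_decay}, such a $\ff$ is automatically a classical solution that decays exponentially, which makes the boundary terms in the integrations by parts below vanish. Multiplying \eqref{E:B6_solitary} by $\ff$ and integrating (using $f(\ff)\ff = (p+1)F(\ff)$, a consequence of the homogeneity of $f$) gives the first identity
\[
(1-c^2)A - \beta B + C = (p+1)D,
\]
where $A=\int_{\rr}\ff^2\dd x$, $B=\int_{\rr}(\ff')^2\dd x$, $C=\int_{\rr}(\ff'')^2\dd x$, and $D=\int_{\rr}F(\ff)\dd x$. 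Multiplying instead by the Pohozaev weight $x\ff'$ and integrating by parts repeatedly---the central computation being $\int_{\rr} x\ff'\ff''''\dd x = \tfrac32 C$---yields the second identity
\[
(c^2-1)A - \beta B + 3C + 2D = 0.
\]

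For part (i) I would eliminate $D$ between the two identities to obtain
\[
(p-1)(c^2-1)A - (p+3)\beta B + (3p+5)C = 0.
\]
Cauchy--Schwarz applied to $B = -\int_{\rr}\ff\ff''\dd x$ gives $B\leq \sqrt{AC}$, and AM--GM applied to the two nonnegative terms $(p-1)(c^2-1)A$ and $(3p+5)C$ forces
\[
(p+3)\beta B \geq 2\sqrt{(p-1)(3p+5)(c^2-1)AC} \geq 2\sqrt{(p-1)(3p+5)(c^2-1)}\,B,
\]
which contradicts the stated bound on $\beta$ (the degenerate case $B=0$ would already imply $\ff\equiv0$). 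For part (ii) I would instead eliminate $C$, producing
\[
4(c^2-1)A + 2\beta B + (3p+5)D = 0.
\]
Under the hypotheses $c^2\geq 1$, $\beta\geq 0$, and $F\geq 0$, every term on the left is nonnegative, so each must vanish. A short case analysis closes the argument: if $A=0$ then $\ff\equiv 0$; otherwise $c^2=1$, and since a nontrivial $H^2$ function has $B>0$, we obtain $\beta=0$ and $D=0$, after which the first identity reduces to $C=0$, again forcing $\ff\equiv 0$.

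The step most likely to demand care is the justification of the Pohozaev identity, since the multiplier $x\ff'$ is unbounded. This is not a genuine obstacle once the exponential decay of $\ff$ and its derivatives from Theorem \ref{T:regularity_decay} is invoked, for then every boundary term at $\pm\infty$ vanishes and every weighted integral converges. Alternatively, the identity may be derived on a truncated interval $[-R,R]$ using a smooth cut-off and then passed to the limit $R\to\infty$, relying only on the $H^2$ hypothesis together with local elliptic regularity.
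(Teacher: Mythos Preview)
Your approach is essentially identical to the paper's: derive the two integral identities (the Pohozaev identity from multiplying by $x\ff'$ and the identity $I(\ff)=K(\ff)$ from multiplying by $\ff$), then take the same two linear combinations---eliminating $D$ for part (i) and eliminating $C$ for part (ii). The paper simply asserts that the resulting expression in (i) is positive under the stated bound on $\beta$, while you make this explicit via AM--GM on the $A$ and $C$ terms together with Cauchy--Schwarz $B\le\sqrt{AC}$; similarly, the paper's argument in (ii) (``left side non-negative, right side negative'') leaves the degenerate case $D=0$ unaddressed, whereas your case analysis closes it. So your write-up is the paper's proof with the gaps filled.

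One small caution: you invoke Theorem~\ref{T:regularity_decay} for exponential decay, but that result is stated (and proved in \cite{levandosky2}) in the regime $c^2<1$, $\beta<\beta_\ast$, where the characteristic polynomial $\xi^4-\beta\xi^2+(1-c^2)$ is strictly positive. For $c^2\ge1$ the linearized equation at infinity has purely imaginary characteristic roots, so the exponential-decay mechanism is different (one must argue that an $H^2$ solution cannot carry the oscillatory modes). Your fallback---deriving the Pohozaev identity by truncation and passing to the limit using only $\ff\in H^4$ (which follows by bootstrapping from $\ff\in H^2$)---is the safer route here and is in fact what the paper implicitly relies on.
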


\proof Suppose $\ff\in H^2(\rr)$ is a solution of \eqref{E:B6_solitary}.
Multiplying the equation by $x\ff'$ and integrating yields the Pohozaev identity
    \begin{equation}\label{E:pohozaev}
      \int_\rr 3(\ff'')^2-\beta(\ff')^2-(1-c^2)\ff^2+2F(\ff)\dd x=0.
    \end{equation}
The identity $I(\ff)=K(\ff)$ may be written
    \begin{equation}\label{E:I=K}
      \int_\rr (\ff'')^2-\beta(\ff')^2+(1-c^2)\ff^2-(p+1)F(\ff)\dd x=0.
    \end{equation}
Together these give
    \[
    (3p+5)\int_\rr(\ff'')^2\dd x-(p+3)\beta\int_\rr(\ff')^2\dd x-(p-1)(1-c^2)\int\ff^2\dd x=0.
    \]
The term on the left side of this equation will be positive, a contradiction, when condition (i) is satisfied. Next, eliminating the $\ff''$ terms in the equations above gives
    \[
    2\beta\int_\rr(\ff')^2\dd x-4(1-c^2)\int_\rr\ff^2\dd x=-(3p+5)\int_\rr F(\ff)\dd x.
    \]
The conditions in (ii) imply that the left hand side is non-negative and the right hand side is negative.
\fim

\section{Stability}\label{S:stability}
In this section we establish that the set of ground state solitary waves is stable under a suitable convexity condition.

\begin{theorem}[Local Existence]\label{local-h2}
Suppose $p\geq2$. Let $\vdu=(u_0,v_0)\in\x$, then there exists $T>0$ and the unique solution
$\vu=(u,v)\in C([0,T);\x)$ of \eqref{E:B6_system} such that $\vu(0)=\vdu$. Moreover $\vu$ satisfies $E(\vu)=E(\vdu)$, $Q(\vu)=Q(\vdu)$,  $Q_1(\vu)=Q_1(\vdu)$, $Q_2(\vu)=Q_2(\vdu)$ and $Q_3(\vu)=Q_3(\vdu)$ where
\begin{eqnarray}
E(\vu)=E(u,v)&=&\int_\rr \frac{1}{2}(u^2-\beta u_x^2+u_{xx}^2+v^2)-F(u)\dd x,\\
Q(\vu)=Q(u,v)&=&\int_\rr uv\dd x,\\
Q_1(\vu)=Q_1(u,v)&=&\int_\rr u\dd x,\\
Q_2(\vu)=Q_2(u,v)&=&\int_\rr v\dd x,\\
Q_3(\vu)=Q_3(u,v)&=&\int_\rr u\partial_x^{2k}v\dd x,\qquad k\in\N.
\end{eqnarray}
and $F'=f$ and $F(0)=0$.
Furthermore $T=+\infty$, or $T<+\infty$ and
\[
\lim_{t\to T^-}\|\vu\|_{\x}=+\infty.
\]
\end{theorem}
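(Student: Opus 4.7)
The plan is to reformulate \eqref{E:B6_system} as the Duhamel integral equation
\[
\vu(t)=S(t)\vdu+\int_0^tS(t-s)N(\vu(s))\,\dd s,
\]
where $S(t)=\ee^{tA}$ is the propagator of the linear part, with $A\vec w=(\pa_xw_2,\,\pa_x(I+\beta\pa_x^2+\pa_x^4)w_1)^T$ and $N(\vu)=(0,-(f(u))_x)^T$, and then run a Banach contraction argument for the resulting mapping $\Phi$ on a closed ball of $C([0,T];\x)$ of radius $\sim 2\|\vdu\|_\x$, for $T>0$ sufficiently small depending on $\|\vdu\|_\x$.

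For the linear semigroup I would diagonalize $A$ on the Fourier side: its symbol is a $2\times 2$ matrix whose eigenvalues are $\pm i\xi\sqrt{1-\beta\xi^2+\xi^4}$. Since $1-\beta\xi^2+\xi^4\to+\infty$ as $|\xi|\to\infty$, the radicand is negative only on a bounded set of frequencies, so $S(t)$ is a strongly continuous group on $\x$ with $\|S(t)\|_{\x\to\x}\leq C\ee^{C|t|}$, where $C=C(\beta)$ absorbs the low-frequency piece that may grow exponentially. For the nonlinearity, the embedding $H^2(\rr)\hookrightarrow L^\infty(\rr)\cap W^{1,4}(\rr)$ combined with $f\in C^2$ and homogeneity of degree $p\geq2$ yields the local Lipschitz estimate
\[
\|N(\vu)-N(\ti\vu)\|_\x=\|(f(u)-f(\ti u))_x\|_{L^2}\lesssim(\|u\|_{H^2}+\|\ti u\|_{H^2})^{p-1}\|u-\ti u\|_{H^2}.
\]
These two ingredients make the Picard contraction standard, yielding existence and uniqueness on a time interval depending only on $\|\vdu\|_\x$; the classical continuation argument then supplies a maximal $T\in(0,+\infty]$ satisfying the claimed blow-up alternative.

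For the conservation laws I would approximate $\vdu$ by smooth, rapidly decaying data $\vdu^{(n)}\in H^s(\rr)\times H^{s-2}(\rr)$ with $s$ large, and rerun the fixed point on this higher-regularity space (the estimates propagate in $s$ because $H^s$ is an algebra for $s>1/2$) to obtain a regular approximating solution $\vu^{(n)}$ on the same time interval. For such smooth solutions, differentiating $E,Q,Q_1,Q_2,Q_3$ in $t$ and using the equations together with repeated integration by parts turns each integrand into a perfect $x$-derivative so that the quantity is constant in $t$; for instance, for $Q$,
\[
\frac{\dd}{\dd t}\int_\rr uv\,\dd x=\int_\rr\bigl(v v_x+u(u+\beta u_{xx}+u_{xxxx}-f(u))_x\bigr)\,\dd x=0.
\]
Passing $n\to\infty$ using continuous dependence of the flow on data in $\x$ transfers the conservation identities to the original $\x$-solution.

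The main technical obstacle is the Lipschitz bound on $N$: writing $(f(u)-f(\ti u))_x=f'(u)\pa_x(u-\ti u)+(f'(u)-f'(\ti u))\pa_x\ti u$, one needs $f'(u)\in L^\infty$ and a Lipschitz estimate $|f'(u)-f'(\ti u)|\lesssim(|u|+|\ti u|)^{p-2}|u-\ti u|$, both of which follow from $f\in C^2$ plus homogeneity via $H^2\hookrightarrow L^\infty$ (with an $L^4\times L^4$ split when $p-2$ is close to zero). Once these basic estimates are in place, the remainder of the proof (existence, uniqueness, continuity of $t\mapsto\vu(t)$ into $\x$, and the blow-up alternative) is routine semigroup bookkeeping.
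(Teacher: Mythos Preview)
Your proof is correct and follows the same overall recipe as the paper: write the system as a linear part plus a locally Lipschitz perturbation, invoke standard semigroup theory for existence, uniqueness and the blow-up alternative, and verify the conservation laws by differentiation. The Lipschitz estimate on the nonlinearity is handled identically in both.

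The one substantive difference is in how the linear propagator is established. The paper does not diagonalize on the Fourier side; instead it introduces the modified inner product
\[
\langle(u_1,v_1),(u_2,v_2)\rangle_\beta=\int_\rr (u_1)_{xx}(u_2)_{xx}-\beta(u_1)_x(u_2)_x+u_1u_2+v_1v_2\,\dd x,
\]
checks by a one-line integration by parts that $B$ is skew-adjoint with respect to $\langle\cdot,\cdot\rangle_\beta$, and invokes Stone's theorem to obtain a \emph{unitary} $C_0$-group on $\x$ (the $\beta$-inner product being equivalent to the standard one when $\beta<2$). This is slightly cleaner than your Fourier argument and gives the sharper bound $\|S(t)\|_{\x\to\x}\leq C$ with no exponential growth, at the cost of implicitly requiring $\beta<2$. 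Your approach, by allowing the exponential factor coming from frequencies where $1-\beta\xi^2+\xi^4$ may be nonpositive, works for arbitrary $\beta$ and is entirely sufficient for the local contraction. Your justification of the conservation laws via approximation by smooth data is in fact more careful than the paper's, which simply asserts that they follow by differentiating and using the system.
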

\proof First write the system \eqref{E:B6_system} as
    \[
    \vec w_t=B\vec w+\vec g(\vec w),
    \]
where
    \[
    B=\begin{pmatrix}
      0&\partial_x\\
      \partial_x+\beta\partial_x^3+\partial_x^5&0
    \end{pmatrix}
    \qquad\qquad
    \vec g(\vec w)=(0,-f(u)_x).
    \]
The result then follows by classical semi-group theory \cite{pazy,segal}, once we show that $B$ is the infinitesimal generator of a $C_0$-semigroup of unitary operators on $\x$, and that $\vec g$ is locally Lipschitz on $\x$. Define an inner product $\left\langle\cdot,\cdot\right\rangle_\beta$ on $\x$ by
    \[
    \left\langle (u_1,v_1),(u_2,v_2)\right\rangle_\beta
    =\int_\rr (u_1)_{xx}(u_2)_{xx}-\beta(u_1)_x(u_2)_x+u_1u_2+v_1v_2\dd x.
    \]
Then for and $\vec w=(u,v)\in\x$, we have
\begin{align*}
  \left\langle B\vec w,\vec w\right\rangle_\beta&=
  \int_\rr v_{xxx}u_{xx}-\beta v_{xx}u_x+v_xu+(u_x+\beta u_{xxx}+u_{xxxxx})v\dd x\\
  &=0
\end{align*}
and therefore $B$ is skew adjoint with respect to this inner product. It then follows from Stone's Theorem that $B$ is the infinitesimal generator of a $C_0$-semigroup of unitary operators on $\x$. Now let $\vec w_1, \vec w_2\in\x$. Then
    \begin{align*}
    \|\vec g(\vec w_2)-\vec g(\vec w_1)\|_\x
    &=\|[f(u_1)-f(u_2)]_x\|_{L^2(\rr)}\\
    &=\|f'(u_1)(u_1)_x-f'(u_2)(u_2)_x\|_{L^2(\rr)}\\
    &\leq \|f'(u_1)(u_1-u_2)_x\|_{L^2(\rr)}+\|(u_2)_x[f'(u_1)-f'(u_2)]\|_{L^2(\rr)}
    \end{align*}
To bound the first term, we use the homogeneity of $f$ and the imbedding of $H^2(\rr)$ into $L^\infty(\rr)$ to obtain
    \[
    \|f'(u_1)\|_{L^\infty(\rr)}\leq C\|u_1\|_{L^\infty(\rr)}^{p-1}\leq C\|u_1\|_{H^2(\rr)}^{p-1}
    \leq C\|\vec w_1\|_{\x}^{p-1},
    \]
and thus
    \[
    \|f'(u_1)(u_1-u_2)_x\|_{L^2}\leq C \|\vec w_1\|_{\x}^{p-1}\|(u_1-u_2)_x\|_{L^2(\rr)}
    \leq C \|\vec w_1\|_{\x}^{p-1}\|\vec w_1-\vec w_2\|_\x.
    \]
For the second term, we again use the homogeneity of $f$ and the imbedding $H^1(\rr)$ into $L^\infty(\rr)$ to find
    \begin{align*}
    \|(u_2)_x[f'(u_1)-f'(u_2)]\|^2_{L^2(\rr)}&\leq C\|u_2\|_{H^2(\rr)}(\|u_1\|_{H^2(\rr)}+\|u_2\|_{H^2(\rr)})^{p-2}\|u_1-u_2\|_{L^2(\rr)}\\
    &\leq C\|\vec w_2\|_\x(\|\vec w_1\|_\x+\|\vec w_2\|_\x)^{p-2}\|\vec w_1-\vec w_2\|_\x.
    \end{align*}
Hence $\vec g$ is locally Lipschitz on $\x$, and the proof of local existence is complete. The conservation laws then follow by differentiating each quantity with respect to $t$ and using the system \eqref{E:B6_system}.
\fim

\begin{definition} We say that a subset $S\subseteq\x$ is $\x$-stable if for every $\epsilon>0$ there exists some $\delta>0$ such that whenever
    \[
    \inf\left\{\|\vec w_0-\vec\psi\|_\x:\vec\psi\in S\right\}<\delta,
    \]
the solution $\vec w$ of the system \eqref{E:B6_system} with $\vec w(0)=\vec w_0$ exists for all $t>0$ and satisfies
    \[
    \sup_{t>0}\inf\left\{\|\vec w(t)-\vec\psi\|_\x:\vec\psi\in S\right\}<\epsilon.
    \]
Otherwise we say the set $S$ is $\x$-unstable.
\end{definition}

In this section we show that the stability of the set of ground states is determined by the convexity of the function
    \begin{equation}\label{E:d_definition}
    d(c)=E(\vec\ff)+cQ(\vec \ff)
    \end{equation}
where $\vec\ff=(\ff,-c\ff)$ and $\ff\in\g$.

\begin{theorem}\label{T:stability} Denote $\vg=\{\vec\ff=(\ff,-c\ff):\ff\in\g\}$.
  Suppose $c^2<1$ and $\beta<\beta_*=2\sqrt{1-c^2}$. If $d''(c)>0$ then $\vg$ is $\x$-stable.
\end{theorem}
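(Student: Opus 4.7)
The plan is to argue by contradiction, combining the variational characterization of $\vg$ from Theorem \ref{ground}, the conservation of $E$ and $Q$ established in Theorem \ref{local-h2}, and the concentration--compactness result Theorem \ref{T:existence}. The strict convexity hypothesis $d''(c)>0$ will enter as the mechanism that prevents a near-minimizing sequence from drifting to a ground state at a different wave speed.

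Suppose $\vg$ is not $\x$-stable. Then one can extract initial data $\vec w_n^0\in\x$ with $\mathrm{dist}_\x(\vec w_n^0,\vg)\to 0$ and times $t_n>0$ such that $\vu_n(t_n)=(U_n,V_n)$ satisfies $\mathrm{dist}_\x(\vu_n(t_n),\vg)\geq\epsilon_0>0$. By conservation and the continuity of $E$ and $Q$ on $\x$, one has $E(\vu_n(t_n))\to E(\vff)$ and $Q(\vu_n(t_n))\to Q(\vff)$ for some fixed $\vff=(\ff,-c\ff)\in\vg$, and hence $L(\vu_n(t_n))=E(\vu_n(t_n))+cQ(\vu_n(t_n))\to d(c)$.

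Next, using the identity \eqref{E:LIK}, the coercivity bound \eqref{E:I_coercive}, and the homogeneous growth of $K$, I would show that $U_n$ is bounded in $H^2(\rr)$ and $V_n+cU_n$ is bounded in $L^2(\rr)$. I would then rescale $\tilde U_n=\alpha_n U_n$ with $\alpha_n>0$ chosen so that $K(\tilde U_n)=K(\ff)$; combining \eqref{E:LIK}, \eqref{E:PIK}, the limit $L(\vu_n(t_n))\to d(c)$, and the minimization characterization from Theorem \ref{ground} will force $\alpha_n\to 1$, $K(U_n)\to K(\ff)$, $I(U_n)\to K(\ff)$, and $\|V_n+cU_n\|_{L^2(\rr)}\to 0$, so that $\tilde U_n$ becomes a minimizing sequence for the problem defining $\g$. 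Applying Theorem \ref{T:existence} to $\tilde U_n$ would then produce translations $y_n$ and some $\psi\in\g$ with $\tilde U_n(\cdot-y_n)\to\psi$ in $H^2(\rr)$, whence $\vu_n(t_n)(\cdot-y_n)\to(\psi,-c\psi)\in\vg$, contradicting $\mathrm{dist}_\x(\vu_n(t_n),\vg)\geq\epsilon_0$.

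The main obstacle is precisely the step of forcing $\alpha_n\to 1$ and $\|V_n+cU_n\|_{L^2(\rr)}\to 0$: absent additional input, a subsequence could in principle concentrate on a ground state at a different wave speed $c'\neq c$, and the strict convexity of $d$ is exactly what rules this out. Concretely, an ``effective speed'' $c'$ attached to $(U_n,V_n)$ through the conserved value of $Q$ would give $L_{c'}(\vu_n(t_n))\geq d(c')$ via Theorem \ref{ground} (after the same scaling procedure at speed $c'$), and the second-order expansion $d(c')\geq d(c)+d'(c)(c'-c)+\tfrac12 d''(c)(c'-c)^2+o((c'-c)^2)$ combined with $d''(c)>0$ forces $c'\to c$ and closes the argument. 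All other ingredients---conservation, variational characterization of $\vg$, and concentration--compactness---are already available from earlier sections, so the proof essentially reduces to carefully tracking how strict convexity of $d$ excludes the off-speed concentration scenario.
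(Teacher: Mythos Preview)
Your overall architecture---contradiction, conservation of $E$ and $Q$, reduction to a minimizing sequence, and concentration--compactness via Theorem~\ref{T:existence}---matches the paper's proof. The gap is in the one step you flag yourself: how strict convexity of $d$ forces $K(U_n)\to K(\ff)$ (equivalently $\alpha_n\to 1$) and $\|V_n+cU_n\|_{L^2}\to 0$.

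Your proposal to attach an ``effective speed'' $c'$ to $(U_n,V_n)$ through $Q$ does not close the argument. Since $Q$ is conserved, any $c'$ defined via $Q(\vu_n(t_n))$ equals the one defined from the initial data and converges to $c$ trivially, giving no control on $K(U_n)$ or $I(U_n)$. More seriously, the inequality $L_{c'}(\vu_n(t_n))\geq d(c')$ you invoke does \emph{not} follow from Theorem~\ref{ground} for general $\vec w$: $d(c')$ is the infimum of $L_{c'}$ only over the Nehari set $\mathscr{N}_{c'}$, and $\vu_n(t_n)$ has no reason to lie on it. Scaling onto $\mathscr{N}_{c'}$ changes $L_{c'}$ and loses the link to the conserved quantities.

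The paper resolves this by defining the effective speed through $K$ rather than $Q$ (Lemma~\ref{L:c_map}): set $c(\vec w)=d^{-1}\bigl(\tfrac{p-1}{2(p+1)}K(u)\bigr)$, so that $K(u)$ equals $K(\psi)$ for a ground state $\psi$ at speed $c(\vec w)$. Then $u$ automatically satisfies the constraint for the minimization at speed $c(\vec w)$, giving $I(u;\beta,c(\vec w))\geq I(\psi)$ and hence $E(\vec w)+c(\vec w)Q(\vec w)\geq d(c(\vec w))$ without any rescaling. Combined with the Taylor expansion of $d$ and $d''(c)>0$, this yields (Lemma~\ref{L:c_bounds})
\[
E(\vec w)-E(\vff)+c(\vec w)\bigl(Q(\vec w)-Q(\vff)\bigr)\ \geq\ \tfrac14 d''(c)\,(c(\vec w)-c)^2,
\]
whose left side tends to zero along $\vu_n(t_n)$ by conservation, forcing $c(\vu_n(t_n))\to c$ and hence $K(U_n)\to K(\ff)$. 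The remaining steps---$I(U_n)\to K(\ff)$, $\|cU_n+V_n\|_{L^2}\to 0$, and the compactness conclusion---then follow exactly as you outline. So your plan is correct once the effective speed is tied to $K$ rather than to $Q$.
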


Before proving Theorem \ref{T:stability}, we state the basic properties of the function $d$. We first note that, for any $\vec w=(u,v)\in\x$ we have
\begin{equation}\label{E:EQIK_relation}
  E(\vec w)+cQ(\vec w)=\frac12I(u)-\frac1{p+1}K(u)+\frac12\int_\rr(cu+v)^2\dd x.
\end{equation}
Applying this to $\vec\ff=(\ff,-c\ff)$ where $\ff\in\g$ and using the fact that $I(\ff)=K(\ff)$, we have
    \[
    E(\vec\ff)+cQ(\vec\ff)=\frac12I(\ff)-\frac1{p+1}K(\ff)=\frac{p-1}{2(p+1)}I(\ff).
    \]
By relation \eqref{E:ground_state_definition} this implies that
\begin{equation}\label{E:dm_relation}
  d(c)=\frac{p-1}{2(p+1)}m(\beta,c)^{\frac{p+1}{p-1}}
\end{equation}
so $d$ is well-defined, and the properties of $d$ may be deduced by studying the properties of the function $m(\beta,c)$. By reasoning similar to that in \cite{levandosky2} we obtain the following.

\begin{lemma}\label{L:d_properties}
On the domain $D=\{(\beta,c):c^2<1, \beta<2\sqrt{1-c^2}\}$, $d$ is continuous and strictly decreasing in both $|c|$ and $\beta$. For each fixed $\beta$, $d_c(\beta,c)$ exists for all but countably many $c$, and for fixed $c$, $d_\beta(\beta,c)$ exists for all but countably many $\beta$. At points of differentiability we have
\begin{align*}
  d_c(\beta,c)&=Q(\vec\ff)=-c\int\ff^2\dd x\\
  d_\beta(\beta,c)&=-\frac12\int\ff_x^2\dd x
\end{align*}
for any $\ff\in\g$.
\end{lemma}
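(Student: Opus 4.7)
The plan is to derive the stated properties from corresponding properties of the function $m(\beta,c)$ by way of \eqref{E:dm_relation}, which writes $d$ as a smooth, strictly increasing function of $m$. The essential observation is that, for each fixed $u\in H^2(\rr)$ with $u\not\equiv 0$, the ratio $I(u)/K(u)^{2/(p+1)}$ is affine in $\beta$ (with slope $-\int u_x^2\,\dd x/K(u)^{2/(p+1)}$) and affine in $c^2$ (with slope $-\int u^2\,\dd x/K(u)^{2/(p+1)}$). Taking the infimum over such $u$ therefore produces a function that is concave in the variables $(\beta,c^2)$ on the domain $D$; note that $D$ is convex in these variables because $s\mapsto 2\sqrt{1-s}$ is concave, so $D$ is the region under the graph of a concave function.

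From concavity on an open convex subset of $\rr^2$ one immediately obtains continuity of $m$ in the interior of $D$, existence of one-sided partial derivatives everywhere, and existence of the full partial derivatives $m_\beta$ and $m_{c^2}$ off countable sets along each slice. Composing with the smooth strictly increasing function $s\mapsto\frac{p-1}{2(p+1)}s^{(p+1)/(p-1)}$ transfers these properties to $d$; since $m$ depends on $c$ only through $c^2$, differentiability of $d$ in $c$ is equivalent to differentiability in $c^2$ away from $c=0$, and at $c=0$ evenness gives $d_c=0$ automatically. Strict monotonicity is then immediate: pick a ground state $\ff_0\in\mathscr{G}(\beta_0,c_0)$ and use it as a trial function at $(\beta,c)$. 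Since $\int(\ff_0)_x^2\,\dd x>0$ and $\int\ff_0^2\,\dd x>0$, any increase in $\beta$ or in $|c|$ strictly lowers the ratio $I_{\beta,c}(\ff_0)/K(\ff_0)^{2/(p+1)}$, which strictly decreases $m$ and hence $d$.

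The derivative formulas come from a one-sided envelope argument. Given $\ff\in\mathscr{G}(\beta_0,c_0)$, rescale to $\ti\ff=\ff/K(\ff)^{1/(p+1)}$ so that $K(\ti\ff)=1$ and $I_{\beta_0,c_0}(\ti\ff)=m(\beta_0,c_0)$. Using $\ti\ff$ as a trial function at $(\beta_0\pm h,c_0)$ for $h>0$ sandwiches the difference quotient:
\[
\frac{m(\beta_0+h,c_0)-m(\beta_0,c_0)}{h}\;\leq\; -\int\ti\ff_x^2\,\dd x\;\leq\; \frac{m(\beta_0,c_0)-m(\beta_0-h,c_0)}{h}.
\]
At any $\beta_0$ where $m$ is differentiable in $\beta$, this forces $m_\beta(\beta_0,c_0)=-\int\ti\ff_x^2\,\dd x$, and an identical argument in $c^2$ yields $m_{c^2}(\beta_0,c_0)=-\int\ti\ff^2\,\dd x$. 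Applying the chain rule through $d=\frac{p-1}{2(p+1)}m^{(p+1)/(p-1)}$ and invoking the identity $m^{(p+1)/(p-1)}=K(\ff)$ makes the factors $K(\ff)^{\pm 2/(p+1)}$ cancel cleanly, producing $d_\beta=-\tfrac12\int\ff_x^2\,\dd x$ and $d_c=2c\,d_{c^2}=-c\int\ff^2\,\dd x$ in terms of the unnormalized ground state.

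The only real technical nuisance will be exactly this bookkeeping of $K(\ff)^{2/(p+1)}$ factors when chaining $m$-differentiation to $d$-differentiation; conceptually everything reduces to the principle that an infimum of affine functions is concave. A useful byproduct of the derivation is that, at any point where $d$ is differentiable, the quantities $\int\ff_x^2\,\dd x$ and $\int\ff^2\,\dd x$ must be independent of the particular ground state $\ff\in\mathscr{G}(\beta,c)$ chosen, which is precisely what makes the stated formulas well posed.
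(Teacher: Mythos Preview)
Your argument is correct and is precisely the approach the paper has in mind: the paper does not spell out a proof but refers to \cite{levandosky2}, where exactly this concavity-of-$m$ argument (infimum of affine functions in $(\beta,c^2)$, envelope inequalities, chain rule through \eqref{E:dm_relation}) is carried out. Your handling of the $K(\ff)^{2/(p+1)}$ bookkeeping and the $c\leftrightarrow c^2$ change of variable at $c=0$ is accurate, and the observation that the derivative formulas force $\int\ff_x^2$ and $\int\ff^2$ to be constant across $\g$ at points of differentiability is exactly the consistency needed for the statement to make sense.
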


For the remainder of this section we fix $\beta<2$ and regard $d$ as a function of $c$ only. We denote by
    \[
    U_\epsilon\equiv U_{\beta,c;\epsilon}=\left\{\vec w\in\x\mid \inf_{\varphi\in \g}\|\vec w-\vec\varphi\|_{\x}<\epsilon\right\}
    \]
the $\epsilon$-neighborhood of the set of ground states $\g$.

\begin{lemma}\label{L:c_map} For each $(\beta,c)\in D^+=\{(\beta,c):0\leq c<1,\beta<2\sqrt{1-c^2}\}$, there exists $\epsilon>0$ such that the mapping $c:U_\epsilon\to\mathbb R$ defined by
    \[
    c(\vec w)=c(u,v)=d^{-1}\left(\frac{p-1}{2(p+1)}K(u)\right)
    \]
is continuous.
\end{lemma}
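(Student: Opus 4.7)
The plan is to factor the map as $\vec w = (u, v) \mapsto \tfrac{p-1}{2(p+1)} K(u) \mapsto d^{-1}\!\bigl(\tfrac{p-1}{2(p+1)} K(u)\bigr)$ and verify continuity of each piece. For the first factor, homogeneity of $F$ of degree $p+1$ together with the $C^2$ hypothesis yields the pointwise estimate $|F(u) - F(\tilde u)| \lesssim (|u|^p + |\tilde u|^p)|u - \tilde u|$; combined with the embedding $H^2(\rr) \hookrightarrow L^\infty(\rr)$ this gives
\[
|K(u) - K(\tilde u)| \lesssim \bigl(\|u\|_{H^2(\rr)}^p + \|\tilde u\|_{H^2(\rr)}^p\bigr)\, \|u - \tilde u\|_{H^2(\rr)},
\]
so $(u, v) \mapsto K(u)$ is continuous from $\x$ to $\rr$. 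For the second factor, Lemma \ref{L:d_properties} says $d$ is continuous and strictly decreasing in $|c|$; restricted to the branch $c \in [0, c_\ast)$ relevant for $D^+$, it is therefore a homeomorphism onto the half-open interval $J = (\lim_{c \to c_\ast^-} d(c),\, d(0)]$, so the restricted inverse $d^{-1}$ is continuous on $J$.

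To combine the two, I would use \eqref{E:ground_state_definition} and \eqref{E:dm_relation} to write $\tfrac{p-1}{2(p+1)} K(\ff) = d(c_0)$ for every $\ff \in \g$, and note that coercivity of $I$ (see \eqref{E:I_coercive}) together with the upper bound $K(u) \lesssim \|u\|_{H^2(\rr)}^{p+1}$ forces $\|\ff\|_{H^2(\rr)}$ to lie in a fixed bounded subinterval of $(0, \infty)$ as $\ff$ ranges over $\g$. For $\vec w \in U_\epsilon$, picking $\vff \in \vg$ with $\|\vec w - \vff\|_\x < \epsilon$, the Lipschitz estimate above gives $\bigl|\tfrac{p-1}{2(p+1)} K(u) - d(c_0)\bigr| \le C_0 \epsilon$ with $C_0$ independent of $\vec w$ and of the chosen $\ff$. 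For $c_0 \in (0, c_\ast)$ I would then pick $\eta > 0$ with $[c_0 - \eta, c_0 + \eta] \subset (0, c_\ast)$ and $\epsilon$ small enough that $C_0 \epsilon < \min\{d(c_0 - \eta) - d(c_0),\, d(c_0) - d(c_0 + \eta)\}$. This places $\tfrac{p-1}{2(p+1)} K(u)$ inside $d([c_0 - \eta, c_0 + \eta]) \subset J$ for every $\vec w \in U_\epsilon$, so $c(\vec w) = d^{-1}(\tfrac{p-1}{2(p+1)} K(u))$ is well-defined on $U_\epsilon$ and continuous as a composition of continuous maps.

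The main subtlety is the boundary case $c_0 = 0$: since $d(0)$ is the right endpoint of $J$, a small perturbation of a ground state can push $\tfrac{p-1}{2(p+1)} K(u)$ slightly above $d(0)$, outside the range of the restricted inverse. I would repair this by extending $d^{-1}$ to take the value $0$ on all of $[d(0), \infty)$, which is consistent with the strict monotonicity of $d|_{[0, c_\ast)}$ (its maximum on this branch is attained at $c = 0$) and yields a one-sided continuous extension of $c(\vec w)$ at $c = 0$.
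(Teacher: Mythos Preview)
Your proof is correct and follows essentially the same approach as the paper: both factor the map as a composition, establish local Lipschitz continuity of $K$ on $H^2(\rr)$ via homogeneity of $F$ and Sobolev embedding, use coercivity of $I$ to bound $\g$ (hence $U_\epsilon$) uniformly in $H^2(\rr)$, and then invoke continuity and strict monotonicity of $d$ to obtain a continuous local inverse. Your treatment is in fact slightly more careful than the paper's at the endpoint $c_0=0$, where the paper simply asserts that $d^{-1}$ is defined on a $\delta$-neighborhood of $d(c)$ without addressing that $d(0)$ is the maximum of $d$ on $[0,c_\ast)$; your extension of $d^{-1}$ by $0$ on $[d(0),\infty)$ is a natural fix.
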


\proof Since $d$ is monotone decreasing and continuous, it follows that for fixed $\beta<2$ its inverse with respect to $d$, $d^{-1}$, is defined and continuous in some $\delta$-neighborhood of $d(c)$. It therefore remains to show that $\frac{p-1}{2(p+1)}K(u)$ lies in this neighborhood when $u\in U_\epsilon$ and $\epsilon$ is sufficiently small. First observe that for any $u_1,u_2\in H^2(\rr)$ we have
    \begin{align*}
    |K(u_1)-K(u_2)|&\leq(p+1)\int_{\rr} |F(u_1)-F(u_2)|\dd x\\
    &=(p+1)\int_\rr |f(\mu(x)u_1+(1-\mu(x))u_2)||u_1-u_2|\dd x\\
    &= (p+1)\int_\rr C|\mu(x)u_1+(1-\mu(x))u_2)|^p|u_1-u_2|\dd x\\
    &\leq C(\|u_1\|_{L^{p+1}}+\|u_2\|_{L^{p+1}})^p\|u_1-u_2\|_{L^{p+1}}.
    \end{align*}
Thus by the embedding of $H^2(\rr)$ into $L^{p+1}(\rr)$, it follows that $K$ is locally Lipschitz on $H^2(\rr)$. Given any $\varphi\in\g$ the coercivity condition \eqref{E:I_coercive} and relations \eqref{E:dm_relation} and \eqref{E:ground_state_definition} imply that
    \[
    \|\ff\|_{H^2(\rr)}\leq {C}^{-1}I(\ff)=C^{-1}\frac{2(p+1)}{p-1}d(c).
    \]
Hence the set of ground states $\g$ is a bounded subset of $\x$. Consequently the neighborhood $U_{\epsilon}$ is bounded for any $\epsilon>0$. Thus since $\frac{p-1}{2(p+1)}K(\varphi)=d(c)$ for any $\ff\in\g$, the Lipschitz continuity of $K$ and boundedness of $U_\epsilon$ imply that $\frac{p-1}{2(p+1)}K(u)$ lies in the $\delta$-neighborhood of $d(c)$ for all $\vec w=(u,v)\in U_\epsilon$ if $\epsilon>0$ is small enough.
\fim

\begin{lemma}\label{L:c_bounds} Suppose $d''(c)>0$. Then there exists some $\epsilon_c>0$ such that for any $\\ff\in\g$ and any $\vec w\in U_{\epsilon}$ we have
    \[
    E(\vec w)-E(\vec\ff)+c(\vec w)(Q(\vec w)-Q(\vec\ff))\geq\frac14d''(c)(c(\vec w)-c)^2.
    \]
\end{lemma}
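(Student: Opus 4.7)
The plan is to exploit the variational characterization of ground states to bound $E(\vec w)+\tilde c\,Q(\vec w)$ from below by $d(\tilde c)$, where $\tilde c=c(\vec w)$, and then to compare with $E(\vec\ff)+\tilde c\,Q(\vec\ff)$ by a Taylor expansion of $d$ at $c$.

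Concretely, fix $\vec w=(u,v)\in U_\epsilon$ and write $\tilde c=c(\vec w)$. By the definition of the map in Lemma \ref{L:c_map} and the relation \eqref{E:dm_relation}, one has $K(u)=m(\beta,\tilde c)^{(p+1)/(p-1)}$, which is exactly $K(\tilde\ff)$ for any $\tilde\ff\in\mathscr{G}(\beta,\tilde c)$. Let $I_{\tilde c}$ denote the functional $I$ with $c$ replaced by $\tilde c$; the variational characterization of $m(\beta,\tilde c)$ as the minimum of $I_{\tilde c}/K^{2/(p+1)}$, combined with the equality $I_{\tilde c}(\tilde\ff)=K(\tilde\ff)$, forces $I_{\tilde c}(u)\geq K(u)$. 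Substituting this into the identity \eqref{E:EQIK_relation} with $c$ replaced by $\tilde c$ yields
\[
E(\vec w)+\tilde c\,Q(\vec w)=\tfrac12 I_{\tilde c}(u)-\tfrac{1}{p+1}K(u)+\tfrac12\|\tilde c u+v\|_{L^2}^2
\geq \tfrac{p-1}{2(p+1)}K(u)=d(\tilde c).
\]

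On the other hand, since $\vec\ff=(\ff,-c\ff)$ with $\ff\in\g$, the identities $E(\vec\ff)+cQ(\vec\ff)=d(c)$ and (by Lemma \ref{L:d_properties}) $Q(\vec\ff)=d'(c)$ give
\[
E(\vec\ff)+\tilde c\,Q(\vec\ff)=d(c)+(\tilde c-c)d'(c).
\]
Subtracting this from the previous inequality produces
\[
E(\vec w)-E(\vec\ff)+\tilde c\bigl(Q(\vec w)-Q(\vec\ff)\bigr)\geq d(\tilde c)-d(c)-(\tilde c-c)d'(c).
\]

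Finally, since $d''(c)$ exists and is positive, Taylor's formula gives $d(\tilde c)-d(c)-(\tilde c-c)d'(c)=\tfrac12 d''(c)(\tilde c-c)^2+o\bigl((\tilde c-c)^2\bigr)$. By Lemma \ref{L:c_map}, the map $\vec w\mapsto c(\vec w)$ is continuous, and $c(\vec\psi)=c$ for every $\vec\psi\in\vg$, so $|\tilde c-c|$ can be made as small as we wish by shrinking $\epsilon$. Choosing $\epsilon_c>0$ small enough that the remainder is bounded in absolute value by $\tfrac14 d''(c)(\tilde c-c)^2$ yields the desired inequality. The main point requiring care is justifying the Taylor expansion with only a.e.\ differentiability of $d$: this is where we use the hypothesis that $d''(c)$ itself exists, which in particular forces $d'$ to exist in a neighborhood of $c$ so the quadratic expansion is legitimate.
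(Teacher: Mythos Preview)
Your proof is correct and follows essentially the same approach as the paper: both arguments establish $E(\vec w)+c(\vec w)Q(\vec w)\geq d(c(\vec w))$ from the variational characterization (you use the Rayleigh-quotient form of $m$, the paper uses the equivalent constrained minimization $I(\psi)\leq I(u)$ on the level set $K=K(u)$), then combine this with the identity $E(\vec\ff)+c(\vec w)Q(\vec\ff)=d(c)+(c(\vec w)-c)d'(c)$ and a second-order Taylor expansion of $d$ at $c$. The only cosmetic difference is the order: the paper does the Taylor step first and the variational step second, while you reverse them.
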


\proof  Using Taylor's Theorem and the fact that $d'(c)=Q(\vec\ff)$ we have
    \[
    d(c_1)=d(c)+Q(\vec\ff)(c_1-c)+\frac12d''(c)(c_1-c)^2+o(|c_1-c|)^2
    \]
for $c_1$ near $c$. Thus for $c_1$ is some $\delta$-neighborhood of $c$ we have
    \[
    d(c_1)\geq d(c)+Q(\vec\ff)(c_1-c)+\frac14d''(c)(c_1-c)^2.
    \]
By Lemma \ref{L:c_map} it then follows that for sufficiently small $\epsilon_c$ and $\vec w=(u,v)\in U_{\epsilon_c}$ we have
    \begin{align*}
    d(c(\vec w))&\geq d(c)+Q(\vec\ff)(c(\vec w)-c)+\frac14d''(c)(c(\vec w)-c)^2\\
    &=E(\vec\ff)+cQ(\vec\ff)+Q(\vec\ff)(c(\vec w)-c)+\frac14d''(c)(c(\vec w)-c)^2\\
    &=E(\vec\ff)+c(\vec w)Q(\vec\ff)+\frac14d''(c)(c(\vec w)-c)^2.
    \end{align*}
Next suppose $\psi\in\mathscr{G}(\beta,c(\vec w))$. Then
$I(\psi;\beta,c(\vec w))=K(\psi)=\frac{2(p+1)}{p-1}d(c(\vec w))=K(u)$ and $\psi$ minimizes
$I(\cdot;\beta,c(\vec w))$ subject to the constraint $K(\cdot)=K(u)$. By \eqref{E:EQIK_relation} we have
    \begin{align*}
    E(\vec w)+c(\vec w)Q(\vec w)&=\frac12I(u;\beta,c(\vec w))-\frac1{p+1}K(u)+\frac12\int_\rr(cu+v)^2\dd x\\
    &\geq \frac12I(\psi;\beta,c(\vec w))-\frac1{p+1}K(\psi)\\
    &=d(c(\vec w)).
    \end{align*}
Combining these inequalities proves the desired result.
\fim

\noindent{\it Proof of Theorem \ref{T:stability}}.
Suppose $\g$ is $\x$-unstable, and choose initial data $\vec w_0^k$ such that
    \[
    \inf_{\ff\in\g}\|\vec w_0^k-\vec\ff\|_{\x}<\frac1k.
    \]
This implies that there exist $\ff_k\in \g$ such that
    \begin{equation}\label{E:stability1}
    \lim_{k\to\infty}\|\vec w_0^k-\vec\ff_k\|_\x=0.
    \end{equation}
Denote by $\vec w^k(t)$ the solutions of \eqref{E:B6_system} with $\vec w^k(0)=\vec w_0^k$. Then there exist some $\delta>0$ and times $t_k$ (for each $k>\frac1\delta$) such that
    \[
    \inf_{\ff\in\g}\|\vec w^k(t_k)-\vec\ff\|_{\x}=\delta.
    \]
Without loss of generality we may also suppose that $\delta<\epsilon_c$ and therefore $\vec w^k(t_k)\in U_{\epsilon_c}$, so that Lemma \ref{L:c_bounds} implies
\begin{equation}\label{E:stability2}
    E(\vec w^k(t^k))-E(\vec\ff)+c(\vec w^k(t_k)))(Q(\vec w^k(t_k))-Q(\vec\ff))\geq\frac14d''(c)[c(\vec w^k(t_k))-c]^2.
\end{equation}
 Next, using the fact that $E$ and $Q$ are continuous on $\x$ and conserved for solutions of equation \eqref{E:B6_system}, we have from equation \eqref{E:stability1} that
    \begin{equation}
    \lim_{k\to\infty} E(\vec w^k(t_k))-E(\vec\ff_k)=\lim_{k\to\infty}E(\vec w_0^k)-E(\vec\ff_k)=0
    \end{equation}
and
    \begin{equation}
    \lim_{k\to\infty} Q(\vec w^k(t_k))-Q(\varphi_k)=\lim_{k\to\infty}Q(\vec w_0^k)-Q(\vec\ff_k)=0.
    \end{equation}
By Lemma \ref{L:c_map}, the sequence of scalars $c(\vec w^k(t_k)))$ is bounded, and thus equation \eqref{E:stability2} implies that
    \[
    \lim_{k\to\infty} c(\vec w^k(t_k))=c.
    \]
By continuity of $d$, this implies that $K(u^k(t_k))=\frac{2(p+1)}{p-1}d(c(\vec w^k(t_k)))$ converges to $\frac{2(p+1)}{p-1}d(c)$.
Using the relation \eqref{E:EQIK_relation} and the fact that $d(c)=E(\vec\ff_k)+cQ(\vec\ff_k)$, it follows that
    \begin{align*}
    \frac12I(u^k(t_k))&=E(\vec w^k(t_k))+cQ(\vec w^k(t_k))+\frac1{p+1}K(u^k(t_k))-\frac12\int\rr(cu^k(t_k)+v^k(t_k))^2\\
    &\leq E(\vec w^k(t_k))-E(\vec\ff_k)+c(Q(\vec w^k(t_k))-Q(\vec\ff_k))+\frac1{p+1}K(u^k(t_k))+d(c)\\
    &\to\frac{p+1}{p-1}d(c).
    \end{align*}
Thus
    \[
    \limsup_{k\to\infty} I(u^k(t_k))\leq\frac{2(p+1)}{p-1}d(c),
    \]
 which implies that $u^k(t_k)$ is a minimizing sequence for $\lambda=\frac{2(p+1)}{p-1}d(c)$. By Theorem \ref{T:existence}, there is a translated subsequence, renamed $u^k(t_k)$, that converges in $H^2(\rr)$ to some $\ff\in\g$. To control the second component of $\vec w^k(t_k)$ observe that
\begin{align*}
 \frac12\int\rr(cu^k(t_k)+v^k(t_k))^2&=-\frac12I(u^k(t_k))+\frac1{p+1}K(u^k(t_k))
 +E(\vec w^k(t_k))+cQ(\vec w^k(t_k))\\
 &\to -d(c)+d(c)=0.
\end{align*}
Hence $v^k(t_k)$ converges in $L^2(\rr)$ to $-c\ff$, and thus $\vec w^k(t_k)$ converges in $\x$ to $\vec\ff=(\ff,-c\ff)$.
Therefore
    \[
    \inf_{\ff\in\g}\|\vec w^k(t_k)-\vec\ff\|_{\x}=0,
    \]
a contradiction. This completes the proof of Theorem \ref{T:stability}.
\fim

\section{Instability}\label{S:instability}
In this section we establish conditions that imply orbital instability of solitary waves.

The following theorem is a key point in the proof of the instability.
\begin{theorem}\label{time-bound-theorem}
Let $\LL^2u_0$ and $\LL^2 v_0$ be in $L^1(\rr)$. Assume that $|f(s)|=O(|s|^p)$ and $|f'(s)|=O(|s|^{p-1})$, as $s\to\infty$, for $p>1$. Suppose also that $\vu=(u,v)$ is a solutions of \eqref{E:B6_system} with $\vu(0)=\vdu$. Then
\begin{enumerate}[(i)]
\item if $p\geq 2$,
\[
\sup_{x\in\rr}\left|\int_{-\infty}^x\vu(z,t)\dd z\right|\leq C\left(1+t^{2/3}+t^{4/5}\right),
\]
\item if $1<p<2$,
\[
\sup_{x\in\rr}\left|\int_{-\infty}^x\vu(z,t)\dd z\right|\leq C\left(t^{1-(p-1)/3}+t^{1-(p-1)/5}\right),
\]
\end{enumerate}
for $t\in(0,T)$, where $T$ is the maximum existence time for $\vu$, and the constant $C>0$  depends only on $\|\vdu\|_\x$, $f$ and $\sup_{t\in[0,T)}\|\vu(t)\|_\x$.
\end{theorem}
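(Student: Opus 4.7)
The plan is to pass to the antiderivatives $U = \partial_x^{-1}u$ and $V = \partial_x^{-1}v$ (both vanishing at $-\infty$) and to bound $\|U(t,\cdot)\|_{L^\infty}+\|V(t,\cdot)\|_{L^\infty}$ via Duhamel's principle combined with dispersive decay estimates. From the system \eqref{E:B6_system}, $U_t = v$ and $V_t = u+\beta u_{xx}+u_{xxxx}-f(u)$, so $U$ satisfies
\[
U_{tt}-U_{xx}-\beta U_{xxxx}-U_{xxxxxx}= -(f(u))_x,\qquad U(0)=\partial_x^{-1}u_0,\ U_t(0)=v_0.
\]
Taking the spatial Fourier transform and setting $\omega(\xi)=|\xi|\sqrt{1-\beta\xi^2+\xi^4}$, Duhamel yields
\[
\hat U(t,\xi) = \cos(\omega t)\,\hat U_0(\xi)+\frac{\sin(\omega t)}{\omega}\,\hat v_0(\xi)-\int_0^t\frac{i\xi\sin(\omega(t-s))}{\omega}\,\widehat{f(u)}(s,\xi)\,ds,
\]
and analogously for $V$.

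The linear terms are bounded using the hypothesis $\Lambda^2 u_0,\Lambda^2 v_0\in L^1$: writing $\hat u_0 = \widehat{\Lambda^2 u_0}/(1+\xi^2)$ absorbs the singular factor $1/\xi$ appearing in $\hat U_0 = \hat u_0/(i\xi)$ against the smoothing $1/(1+\xi^2)$, producing symbols with sufficient integrability. Stationary phase (van der Corput) applied to the oscillatory integrals $\int e^{i(x\xi+\omega(\xi)t)} m(\xi)\,d\xi$ then gives a pointwise kernel bound whose decay rate is dictated by the vanishing order of the derivatives of $\omega$. At high frequency $\omega(\xi)\sim\xi^3$ and $|\omega'''(\xi)|$ stays bounded below, yielding the KdV-type rate $t^{-1/3}$; at low frequency, the expansion
\[
\omega(\xi) = \xi-\tfrac{\beta}{2}\xi^3+\tfrac{4-\beta^2}{8}\xi^5+O(\xi^7)
\]
gives $\omega'''(0)=-3\beta$, which is generically nonzero (again $t^{-1/3}$), but in the borderline case $\beta=0$ one must look all the way to $\omega^{(5)}(0)=60$, producing the slower rate $t^{-1/5}$. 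Handling both regimes uniformly in $\beta$ yields a kernel bound $\|\mathcal{K}(t,\cdot)\|_{L^\infty}\lesssim t^{-1/3}+t^{-1/5}$ for $t\geq 1$ (with an $O(1)$ bound near $t=0$), and the linear contribution to $(U,V)$ stays uniformly bounded in $t$.

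For the nonlinear Duhamel integral, Young's inequality gives
\[
\left\|\int_0^t\mathcal{K}(t-s,\cdot)\ast f(u(s,\cdot))\,ds\right\|_{L^\infty}\leq\int_0^t\|\mathcal{K}(t-s,\cdot)\|_{L^\infty}\,\|f(u(s,\cdot))\|_{L^1}\,ds.
\]
The a priori bound $M=\sup_{t\in[0,T)}\|\vec u(t)\|_\x$, together with $H^2(\rr)\hookrightarrow L^\infty(\rr)$ and the growth hypotheses on $f$, controls $f(u)$ in the relevant Lebesgue norm. For $p\geq 2$,
\[
\|f(u(s))\|_{L^1}\lesssim \|u(s)\|_{L^\infty}^{p-2}\|u(s)\|_{L^2}^2\leq C(M),
\]
and time integration gives $\int_0^t(t-s)^{-1/3}\,ds\sim t^{2/3}$ and $\int_0^t(t-s)^{-1/5}\,ds\sim t^{4/5}$, yielding (i). For $1<p<2$, $f(u)$ need not lie in $L^1$, so instead the estimate $\|f(u)\|_{L^2}\lesssim\|u\|_{L^\infty}^{p-1}\|u\|_{L^2}\leq C(M)$ is coupled with an interpolated dispersive bound of the form $\|\mathcal{K}(t)\ast g\|_{L^\infty}\lesssim t^{-(p-1)/3}\|g\|_{L^2}$ (and its $t^{-(p-1)/5}$ analogue), obtained by Riesz--Thorin interpolation between the $L^1\to L^\infty$ estimate above and the trivial $L^2\to L^2$ bound; the time integral then produces $t^{1-(p-1)/3}$ and $t^{1-(p-1)/5}$, giving (ii).

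The main technical obstacle will be the uniform-in-$x$ stationary-phase analysis of the phase $x\xi+\omega(\xi)t$: because the critical point $\xi^\ast$ defined by $\omega'(\xi^\ast)=-x/t$ can lie anywhere in $\R$, a smooth dyadic decomposition of the Fourier side is needed to keep the kernel constants independent of $x$, and the degeneracy of $\omega$ at $\xi=0$ as $\beta\to 0$ must be carefully tracked to obtain the sharper of the two rates. A secondary subtlety is the interpolated Lebesgue-space dispersive estimate needed in the sub-quadratic regime $1<p<2$, which must be derived with correct scaling in $t$ to produce the exponents $1-(p-1)/3$ and $1-(p-1)/5$ exactly.
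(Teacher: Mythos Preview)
Your overall architecture—Duhamel for the antiderivative, van der Corput/stationary phase giving kernel decay $t^{-1/3}+t^{-1/5}$, Young's inequality with $\|f(u)\|_{L^1}$ for $p\ge 2$, and Riesz--Thorin interpolation toward $L^2$ for $1<p<2$—matches the paper's proof closely. The dispersive analysis of the phase (high frequency $\omega\sim\xi^3$, degeneracy at $\xi=0$ to order $3$ when $\beta=0$) is exactly what the paper encodes in its oscillatory-integral lemma.

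The genuine gap is in your treatment of the \emph{linear} contribution. You claim that writing $\hat u_0=\widehat{\Lambda^2 u_0}/(1+\xi^2)$ ``absorbs the singular factor $1/\xi$'' in $\hat U_0=\hat u_0/(i\xi)$. It does not: $1/(1+\xi^2)$ equals $1$ at $\xi=0$, so the symbol $1/[\xi(1+\xi^2)]$ still has a nonintegrable $1/\xi$ singularity. Equivalently, $U_0=\partial_x^{-1}u_0$ is generically not in $L^1$ (indeed $U_0(+\infty)=\int u_0\neq 0$), so a direct $L^1\to L^\infty$ stationary-phase bound on $\cos(\omega t)\,\hat U_0$ is not available. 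Consequently your assertion that ``the linear contribution to $(U,V)$ stays uniformly bounded in $t$'' is unjustified.

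The paper circumvents this by a time-differentiation trick: writing $\vec W(x,t)=\int_{-\infty}^x S(t)\vec u_0\,dz$ and observing that $\vec W_t=-A\,S(t)\vec u_0$ with $A=\begin{pmatrix}0&-1\\-1-\beta\partial_x^2-\partial_x^4&0\end{pmatrix}$, one integrates in $t$ to get
\[
\vec W(t)=\vec U_0-\int_0^t S(\tau)\,A\vec u_0\,d\tau.
\]
Now $A\vec u_0$ involves only $u_0,v_0$ and their derivatives (no antiderivatives), so the dispersive estimate $\|S(\tau)\vec g\|_{L^\infty}\lesssim(\tau^{-1/3}+\tau^{-1/5})(\|\Lambda^2 g_1\|_{L^1}+\|g_2\|_{L^1})$ applies legitimately and yields $|\vec W(x,t)|\lesssim 1+t^{2/3}+t^{4/5}$. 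In particular the linear piece \emph{does} grow at the same rate as the nonlinear Duhamel term; it is not uniformly bounded. Once you replace your linear-part argument with this identity, the rest of your outline goes through exactly as in the paper.
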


To prove Theorem \ref{time-bound-theorem}, a series of useful lemmas are laid out. The first one is the well-known Van der Corput lemma \cite{stein} as follows.
\begin{lemma}\label{van-der-corput}
Let $h$ be either convex or concave on $[a,b]$ with $-\infty\leq a<b\leq+\infty$. Then
\[
\left|\int_a^b\ee^{\ii h(\xi)}\dd\xi\right|\leq 4\left(\min_{\xi\in[a,b]}|h''(\xi)|\right)^{-1/2},
\]
if $h''\neq0$ in $[a,b]$.
\end{lemma}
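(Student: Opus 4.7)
The plan is to prove the estimate by the classical method of isolating a short interval around the minimum of $|h'|$ and integrating by parts on its complement. By replacing $h$ with $-h$ if necessary (which leaves $|\int \ee^{\ii h}\dd\xi|$ unchanged), I may assume $h$ is convex, so that $h'$ is strictly increasing and $h'' \geq \lambda := \min_{\xi \in [a,b]}|h''(\xi)| > 0$. Let $\xi_* \in [a,b]$ be the unique point at which $|h'|$ attains its minimum: the zero of $h'$ if one exists in $(a,b)$, otherwise the endpoint at which $|h'|$ is smallest. For a parameter $\delta > 0$ to be optimized, I split the integral over $[a,b] = J_\delta \cup J_\delta^c$, where $J_\delta := [a,b]\cap[\xi_* - \delta,\, \xi_* + \delta]$, and use the trivial bound
\[
\left|\int_{J_\delta}\ee^{\ii h(\xi)}\dd\xi\right| \leq |J_\delta| \leq 2\delta.
\]

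On each of the (at most two) connected components of $J_\delta^c$, the monotonicity of $h'$, combined with $h'' \geq \lambda$ and the definition of $\xi_*$, forces $|h'(\xi)| \geq \lambda\delta$, so the integration-by-parts identity
\[
\int \ee^{\ii h(\xi)}\dd\xi = \left[\frac{\ee^{\ii h(\xi)}}{\ii\, h'(\xi)}\right] + \int \frac{h''(\xi)}{\ii\,(h'(\xi))^2}\,\ee^{\ii h(\xi)}\dd\xi
\]
is legitimate there. Each boundary term is bounded in modulus by $1/(\lambda\delta)$. The remainder, whose integrand satisfies $h''/(h')^2 = -\tfrac{d}{d\xi}(1/h')$ and is therefore sign-definite on each component (since $1/h'$ is monotone there), telescopes to a difference of $1/h'$ values again at most $1/(\lambda\delta)$. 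Summing over the two components of $J_\delta^c$ and combining with the bound on $J_\delta$ yields
\[
\left|\int_a^b \ee^{\ii h(\xi)}\dd\xi\right| \leq 2\delta + \frac{C}{\lambda\delta}
\]
for an explicit constant $C$; optimizing in $\delta$ produces the desired $\lambda^{-1/2}$ decay.

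The main technical point is recovering the precise constant $4$ asserted in the lemma: this requires careful bookkeeping of the boundary and remainder contributions on each component (in particular, exploiting the cancellation between the boundary term at the outer endpoint and the telescoping remainder, both of which involve $1/h'$ at that endpoint with opposite signs, so that each component contributes effectively $2/(\lambda\delta)$ rather than the naive sum) followed by the sharp choice of $\delta$. The infinite-interval case is dispatched by observing that $h'' \geq \lambda$ forces $|h'(\xi)| \to \infty$ as $|\xi| \to \infty$, so the boundary terms at infinity in the integration-by-parts formula vanish; applying the argument on $[-N, N] \cap [a,b]$ and letting $N \to \infty$ transfers the estimate to the full interval.
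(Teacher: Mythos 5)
The paper offers no proof of this lemma at all: it is quoted as ``the well-known Van der Corput lemma'' with a citation to Stein's Beijing lectures. So there is nothing internal to compare against; what you have written is the standard textbook proof (trivial bound on a $\delta$-window around the minimum of $|h'|$, integration by parts with the telescoping $-\tfrac{d}{d\xi}(1/h')$ remainder on the complement, optimization in $\delta$), and it correctly establishes the bound $C\left(\min|h''|\right)^{-1/2}$ with an absolute constant. That is all the paper ever uses: in Lemmas \ref{L:van_der_corput_corollary} and \ref{oscillatory-integral} only the power of $\min|h''|$ matters, so your argument fully supports everything downstream. Your handling of the unbounded-interval case (noting $h''\geq\lambda$ forces $|h'|\to\infty$, so the boundary terms at infinity vanish and one can exhaust by $[-N,N]$) is also correct and is a point the lemma's statement glosses over.

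The one genuine flaw is your claim to recover the constant $4$, which you yourself single out as ``the main technical point.'' The cancellation you describe --- the outer boundary term $1/h'$ cancelling against the telescoped remainder --- is precisely what produces the bound $2/(\lambda\delta)$ on each component of $J_\delta^c$; it is already spent. With two components plus the window you get $2\delta+4/(\lambda\delta)$, and by AM--GM this is at least $4\sqrt{2}\,\lambda^{-1/2}$ for every $\delta$, so no choice of $\delta$ yields the constant $4$. (The worst case is an interior zero of $h'$; when the minimizer is an endpoint your bound improves to $2\sqrt{2}\,\lambda^{-1/2}$, but that does not help the generic case.) To literally obtain $4$ one needs a genuinely sharper device than this decomposition --- Stein's own argument for the second-derivative case gives $8$, and many sources state $4\sqrt2$ or $10$. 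Since the constant is irrelevant to the paper, the honest fix is either to prove the lemma with $4\sqrt{2}$ (or an unspecified absolute constant) or to supply a different argument for the stated constant; as written, the claim that careful bookkeeping of these same terms yields $4$ is not substantiated and would not survive the optimization you propose.
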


\begin{lemma}\label{L:van_der_corput_corollary}
Suppose $h$ is twice differentiable on $\mathbf R$ and
\begin{enumerate}[(i)]
  \item $h''$ has finitely many zeroes, all of which are of order $q_1$ or less.
  \item there exist positive constants $C_1$ and $C_2$ such that $|h''(\xi)|\geq C_2|\xi|^{q_2}$ whenever $|\xi|\geq C_1$.
\end{enumerate}
Then there exists a constant $C$ such that
    \[
    \left|\int_\rr\ee^{\ii th(\xi)}\dd\xi\right|\leq Ct^{-1/(2+q_2)}
    \]
for $0<t<1$, and
    \[
    \left|\int_\rr\ee^{\ii th(\xi)}\dd\xi\right|\leq Ct^{-1/(2+q_1)}
    \]
for $t\geq1$.
\end{lemma}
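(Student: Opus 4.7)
My plan is to decompose $\rr$ into pieces adapted to the geometry of $h''$: small neighborhoods of each zero of $h''$ (all of which lie in $[-C_1,C_1]$, since (ii) with $q_2\geq 0$ forces $|h''(\xi)|\geq C_2 C_1^{q_2}>0$ for $|\xi|\geq C_1$), a bounded ``middle'' region on which $|h''|$ has a uniform positive lower bound, and a dyadic decomposition of the tail $|\xi|\geq C_1$. On each such piece $h''$ has constant sign, so Lemma \ref{van-der-corput} applied to the phase $th$ yields $\bigl|\int_a^b \ee^{\ii t h(\xi)}\dd\xi\bigr|\leq 4\bigl(t\min_{[a,b]}|h''|\bigr)^{-1/2}$; on very short pieces I would instead use the trivial bound $b-a$.

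Near each zero $\xi_0$ of $h''$, of order $j\leq q_1$, I choose $\delta_0\in(0,1)$ small enough that $|h''(\xi)|\geq c|\xi-\xi_0|^j\geq c|\xi-\xi_0|^{q_1}$ throughout $|\xi-\xi_0|\leq\delta_0$ and that $h''$ keeps its sign on each of $(\xi_0-\delta_0,\xi_0)$ and $(\xi_0,\xi_0+\delta_0)$. For a parameter $\delta\in(0,\delta_0]$, I estimate $[\xi_0-\delta,\xi_0+\delta]$ trivially by $2\delta$ and apply Van der Corput on $[\xi_0+\delta,\xi_0+\delta_0]$ and $[\xi_0-\delta_0,\xi_0-\delta]$ using the lower bound $\min|h''|\geq c\delta^{q_1}$, obtaining $Ct^{-1/2}\delta^{-q_1/2}$ per side. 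Balancing these by the choice $\delta\sim t^{-1/(q_1+2)}$ yields the bound $Ct^{-1/(q_1+2)}$, valid whenever this $\delta$ lies in $(0,\delta_0]$; for the finitely many $t$ outside this range the trivial estimate $2\delta_0$ is absorbed into the constant.

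For the tail $|\xi|\geq C_1$, on each dyadic interval $[C_1 2^k,C_1 2^{k+1}]$ the function $h''$ is nonvanishing and satisfies $|h''|\geq C_2(C_1 2^k)^{q_2}$, so Van der Corput yields $Ct^{-1/2}2^{-kq_2/2}$. For $t\geq 1$, summing this geometric series (with $q_2>0$) gives $O(t^{-1/2})$, which together with the $O(t^{-1/2})$ from the bounded middle region is dominated, since $q_1\geq 0$, by the near-zero bound $Ct^{-1/(q_1+2)}$. For $0<t<1$ I instead introduce a cutoff $R=t^{-1/(q_2+2)}$: bound $|\xi|\leq R$ trivially by $2R$, and use the dyadic Van der Corput sum on $|\xi|\geq R$ to obtain $Ct^{-1/2}R^{-q_2/2}=Ct^{-1/(q_2+2)}$, which dominates both the $O(1)$ near-zero and bounded-middle contributions. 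The hardest part will be the careful bookkeeping of these splits and verifying the constant-sign hypothesis on each Van der Corput subinterval; the two optimizations in $\delta$ and $R$ are then routine.
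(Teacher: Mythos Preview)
Your proposal is correct and follows essentially the same route as the paper: split off $\delta$-neighborhoods of the zeros of $h''$ and balance the trivial bound $O(\delta)$ against the Van der Corput bound $O(t^{-1/2}\delta^{-q_1/2})$ by taking $\delta\sim t^{-1/(q_1+2)}$ for $t\geq1$; for $0<t<1$ cut at $R=t^{-1/(q_2+2)}$ and balance $2R$ against $O(t^{-1/2}R^{-q_2/2})$. The only cosmetic difference is that you introduce a separate ``middle'' region and a dyadic decomposition of the tail, whereas the paper simply observes that on the complement of the $\epsilon$-neighborhoods one has the uniform lower bound $|h''|\geq C_3\epsilon^{q_1}$ (combining the near-zero, middle, and tail bounds into one) and applies Lemma~\ref{van-der-corput} directly on each connected component, including the two unbounded half-lines; your dyadic sum achieves the same thing with slightly more bookkeeping.
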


\begin{proof} First suppose $0<t<1$. Given $R>C_1$, set
$\Omega_1=\{\xi:|\xi|<R\}$ and $\Omega_2=\{\xi:|\xi|\geq R\}$. Then $|h''(\xi)|\geq C_2R^{q_2}$ for $\xi\in\Omega_2$, so by Lemma \ref{van-der-corput} we have
    \[
    \left|\int_{\Omega_2}\ee^{\ii th(\xi)}\dd\xi\right|\leq CR^{-q_2/2}t^{-1/2},
    \]
while on $\Omega_1$ we have
    \[
    \left|\int_{\Omega_1}\ee^{\ii th(\xi)}\dd\xi\right|\leq R.
    \]
For $0<t<1$ sufficiently small, we may set $R=t^{-1/(2+q_2)}$ and the result follows.

Next suppose $t\geq 1$and let $\xi_1,\ldots,\xi_n$ denote the zeroes of $h''$. For $\epsilon>0$ let $I_k=\{\xi:|\xi-\xi_k|<\epsilon\}$ for each $k$, and set
$\Omega_{1}=\bigcup_{k}I_k$ and $\Omega_{2}=\rr\backslash\Omega_{11}$. Then we have
    \[
    \left|\int_{\Omega_{1}}\ee^{\ii th(\xi)}\dd\xi\right|\leq C\epsilon.
    \]
Since each zero of $h''$ is at most order $q_1$, there exists $C_3>0$ such that for $\epsilon>0$ sufficiently small, we have
$|h''(\xi)|\geq C_3\epsilon^{q_1}$ for $\xi\in\Omega_{2}$. It then follows from Lemma \ref{van-der-corput} that
    \[
    \left|\int_{\Omega_{2}}\ee^{\ii th(\xi)}\dd\xi\right|\leq Ct^{-1/2}\epsilon^{-q_1/2}.
    \]
For $t$ sufficiently large, we may set $\epsilon=t^{-1/(2+q_1)}$ and the estimate follows.
\end{proof}
\fim

\begin{lemma}\label{oscillatory-integral}
Let $\beta<2$ and set $h(\xi,\al)=\sqrt{\xi^2-\beta\xi^4+\xi^6}+\al\xi$.
\begin{enumerate}[(i)]
  \item If $\beta\neq0$, there exists a positive constant $C$ such that
\[
\sup_{\al\in\rr}\left|\int_\rr\ee^{\ii t h(\xi,\al)}\dd\xi\right|\leq Ct^{-1/3}
\]
for all $t>0$.
\item If $\beta=0$ there exists a positive constant $C$ such that
\[
\sup_{\al\in\rr}\left|\int_\rr\ee^{\ii t h(\xi,\al)}\dd\xi\right|\leq C(t^{-1/3}+t^{-1/5})
\]
for all $t>0$.
\end{enumerate}
\end{lemma}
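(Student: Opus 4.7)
The plan is to reduce the integral over $\rr$ to two integrals over $[0,\infty)$ so that the phase becomes smooth and its second derivative is independent of $\alpha$. Since $\xi^2-\beta\xi^4+\xi^6=\xi^2(1-\beta\xi^2+\xi^4)$ with the quartic factor strictly positive whenever $\beta<2$ (its discriminant in $\xi^2$ is $\beta^2-4<0$ for $|\beta|<2$, while for $\beta\leq-2$ both roots in $\xi^2$ are negative), we have $\sqrt{\xi^2-\beta\xi^4+\xi^6}=|\xi|\sqrt{1-\beta\xi^2+\xi^4}$. Setting $g(\xi)=\xi\sqrt{1-\beta\xi^2+\xi^4}$, which is smooth on $[0,\infty)$, and changing $\xi\to-\xi$ on the negative half-axis, I rewrite
\[
\int_{\rr}\ee^{\ii t h(\xi,\alpha)}\dd\xi = \int_0^\infty\ee^{\ii t(g(\xi)+\alpha\xi)}\dd\xi + \int_0^\infty\ee^{\ii t(g(\xi)-\alpha\xi)}\dd\xi.
\]
Both integrands have second derivative equal to $g''(\xi)$, independent of $\alpha$, so any Van der Corput bound expressed purely in terms of $g''$ will be automatically uniform in $\alpha$.

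Next I would compute $g''$ explicitly. A direct differentiation yields
\[
g''(\xi)=\frac{\xi\bigl[-3\beta+(2\beta^2+10)\xi^2-9\beta\xi^4+6\xi^6\bigr]}{(1-\beta\xi^2+\xi^4)^{3/2}}.
\]
Writing $\tilde R(y)=6y^3-9\beta y^2+(2\beta^2+10)y-3\beta$ with $y=\xi^2$, the bracketed factor is $\tilde R(\xi^2)$. For $\beta\leq 0$ all coefficients of $\tilde R$ are non-negative, so $\tilde R(y)>0$ for $y>0$ while $\tilde R(0)=-3\beta\geq 0$; hence on $[0,\infty)$ the only zero of $g''$ is at $\xi=0$, which is a simple zero if $\beta\neq 0$ and of order three if $\beta=0$. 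For $0<\beta<2$ one checks that $\tilde R'(y)=18y^2-18\beta y+(2\beta^2+10)$ has discriminant $180(\beta^2-4)<0$, so $\tilde R$ is strictly increasing; combined with $\tilde R(0)=-3\beta<0$, this forces a unique simple positive root $y_\ast$, giving one extra simple zero of $g''$ at $\xi=\sqrt{y_\ast}$. In every case $g''(\xi)\sim 6\xi$ as $\xi\to\infty$.

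With these orders of vanishing in hand, the hypotheses of Lemma \ref{L:van_der_corput_corollary} are satisfied on $[0,\infty)$ with $q_2=1$, and with $q_1=1$ when $\beta\neq 0$, $q_1=3$ when $\beta=0$. Running its proof on the half-line (split $[0,\infty)$ into the finitely many intervals on which $g''$ has constant sign so that Lemma \ref{van-der-corput} applies, then balance each Van der Corput gain against the trivial length bound on a small neighborhood of each zero) gives
\[
\left|\int_0^\infty\ee^{\ii t(g(\xi)\pm\alpha\xi)}\dd\xi\right|\leq
\begin{cases}
Ct^{-1/3},&\beta\neq 0,\\
C(t^{-1/3}+t^{-1/5}),&\beta=0,
\end{cases}
\]
uniformly in $\alpha\in\rr$ and $t>0$. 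Summing the two half-line contributions proves the lemma. The main obstacle is the zero analysis of $g''$: ruling out additional interior roots of $\tilde R$ when $0<\beta<2$ (handled by the discriminant trick) and, more importantly, recognizing that when $\beta=0$ the zero of $g''$ at the origin degenerates from order one to order three, which is precisely what forces the weaker large-time rate $t^{-1/5}$.
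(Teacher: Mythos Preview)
Your proof is correct and follows essentially the same route as the paper: compute the second derivative of the phase, show the bracketed polynomial $-3\beta+(2\beta^2+10)\xi^2-9\beta\xi^4+6\xi^6$ is increasing in $\xi^2$ for $\beta<2$ (the paper asserts this without detail, whereas you supply the discriminant argument for $\tilde R'$), read off the orders of the zeros of $h''$, and feed these into Lemma~\ref{L:van_der_corput_corollary} with $q_1=q_2=1$ for $\beta\neq0$ and $q_1=3$, $q_2=1$ for $\beta=0$. Your preliminary reduction to two half-line integrals is a clean way to sidestep the non-smoothness of $|\xi|$ at the origin, which the paper treats somewhat informally by working with $h''$ on $\rr\setminus\{0\}$.
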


\proof First observe that $h$ is an even $C^\infty$-function in $\rr\setminus\{0\}$ with
\begin{equation}\label{h2-1}
\frac{\partial^2 h}{\partial\xi^2}=\frac{|\xi|(-3\beta+(2\beta^2+10)\xi^2-9\beta\xi^4+6\xi^6)}{(1-\beta\xi^2+\xi^4)^{3/2}}.
\end{equation}
Since the polynomial $-3\beta+(2\beta^2+10)\xi^2-9\beta\xi^4+6\xi^6$ is increasing in $\xi^2$ for $\beta<2$, it follows that
\begin{enumerate}[(i)]
  \item if $\beta>0$, $h''$ has three simple zeroes, $0$, $\xi_0>0$ and $-\xi_0$,
  \item if $\beta<0$, $h''$ has one simple zero, $\xi=0$,
  \item if $\beta=0$, $h''$ has a zero of order 3 at $\xi=0$.
\end{enumerate}
In cases (i) and (ii) the result then follows from Lemma \ref{L:van_der_corput_corollary} with $q_1=q_2=1$, while for $\beta=0$ it follows from the same lemma with $q_1=3$ and $q_2=1$.
\fim

\begin{lemma}\label{Lp-bessel}
If $u\in L^p(\rr)$, $1\leq p\leq \infty$, then $\Lambda^{-2}u\in L^p(\rr)$ and $\|\Lambda^{-2}u\|_{L^p(\rr)}\leq C\|u\|_{L^p(\rr)}$, for some $C>0$.
\end{lemma}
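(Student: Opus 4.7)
The plan is to realize $\Lambda^{-2}$ as convolution with an explicit $L^1$ kernel, and then invoke Young's inequality. First I would note that $\Lambda^{-2}u = (I-\partial_x^2)^{-1}u$ is the unique tempered solution of $v - v_{xx} = u$, and that this can be written as $v = G \ast u$ where the Green's function $G$ satisfies $\widehat G(\xi) = (1+\xi^2)^{-1}$. A direct residue calculation (or the standard formula for the one-dimensional Bessel kernel of order $2$) yields
\[
G(x) = \tfrac{1}{2}\ee^{-|x|}.
\]

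Next I would observe that $G$ is non-negative and belongs to $L^1(\rr)$ with
\[
\|G\|_{L^1(\rr)} = \int_\rr \tfrac{1}{2}\ee^{-|x|}\dd x = 1.
\]
Young's convolution inequality then gives, for every $1\leq p\leq \infty$ and every $u\in L^p(\rr)$,
\[
\|\Lambda^{-2}u\|_{L^p(\rr)} = \|G\ast u\|_{L^p(\rr)} \leq \|G\|_{L^1(\rr)}\|u\|_{L^p(\rr)} = \|u\|_{L^p(\rr)},
\]
which is the desired estimate with $C=1$.

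There is essentially no obstacle here: the only thing to verify with any care is the identification of the convolution kernel, which for Schwartz $u$ follows from inverting $\widehat G(\xi) = (1+\xi^2)^{-1}$ via contour integration (closing in the upper or lower half-plane according to the sign of $x$), and the general $L^p$ case is obtained either by density when $p<\infty$ or by interpreting $\Lambda^{-2}u$ directly as the pointwise convolution $G\ast u$ (which is well-defined for all $u\in L^p$, $1\leq p\leq\infty$, since $G\in L^1$). This completes the argument.
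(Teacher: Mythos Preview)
Your proof is correct and follows exactly the same approach as the paper: identify $\Lambda^{-2}u = G\ast u$ with $\widehat G(\xi)=(1+\xi^2)^{-1}$, note that $G(x)=\tfrac12 e^{-|x|}\in L^1(\rr)$, and apply Young's inequality. The paper states this in one line (with $G(x)=e^{-|x|}$, a harmless normalization discrepancy), so your write-up simply spells out the same argument in more detail.
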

\proof
The proof follows from Young's inequality and the fact $G(x)=\exp(-|x|)\in L^1(\rr)$, where $\Lambda^{-2}u=G\ast u$  and $\what{G}(\xi)=(1+\xi^2)^{-1}$.
\fim
The following lemma gives a time estimate on the solutions of the linearized problem.
\begin{lemma}\label{est-lin-oper}
Let $S(t)$ be the $C_0$ group of unitary operators for the linearized  problem of \eqref{E:B6_system}
\[
\vu_t+
\left(\begin{array}{lr}
0&-1\\
-1-\beta\partial_x^2-\partial_x^4&0
\end{array}\right)
\vu_x=0,
\]
with $\vu(0)=\vdu(u_0,v_0)$. If $\Lambda^2u_0\in L^1(\rr)$ and $v_0\in L^1(\rr)$, then $S(t)\vdu\in L^\infty(\rr)\times L^\infty(\rr)$ and
\[
\|S(t)\vdu\|_{L^\infty(\rr)\times L^\infty(\rr)}
\leq
C\left(t^{-1/3}+t^{-1/5}\right)\left(\|\Lambda^2u_0\|_{L^1(\rr)}+\|v_0\|_{L^1(\rr)}\right),
\]
where $C>0$ is a constant.
\end{lemma}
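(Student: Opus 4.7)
The plan is to diagonalize the linear system with the Fourier transform, express each component of the solution as a sum of convolutions of dispersive kernels against the two $L^1$ pieces of initial data, and then bound each kernel pointwise via Lemma \ref{oscillatory-integral}. Taking the Fourier transform in $x$ reduces the linearized system to the scalar ODE $\hat u_{tt}+\Omega(\xi)^2\hat u=0$, where $\Omega(\xi):=\sqrt{\xi^2-\beta\xi^4+\xi^6}$ is real-valued for $\beta<2$ since $1-\beta\xi^2+\xi^4>0$. Solving with $\hat u(0)=\hat u_0$ and $\hat u_t(0)=i\xi\hat v_0$ yields
\[
\hat u(\xi,t)=\cos(\Omega t)\hat u_0+\tfrac{i\xi\sin(\Omega t)}{\Omega}\hat v_0,\qquad \hat v(\xi,t)=\cos(\Omega t)\hat v_0+\tfrac{i\Omega\sin(\Omega t)}{\xi}\hat u_0.
\]

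Since only $\Lambda^2 u_0\in L^1$ is assumed, I would substitute $\hat u_0=(1+\xi^2)^{-1}\widehat{\Lambda^2 u_0}$ in the formulas above. Inverting and applying Young's inequality $\|K\ast f\|_{L^\infty}\le\|K\|_{L^\infty}\|f\|_{L^1}$ reduces the proof to showing that the four kernels
\[
K_1=\Big[\tfrac{\cos(\Omega t)}{1+\xi^2}\Big]^\vee,\quad K_2=\Big[\tfrac{i\xi\sin(\Omega t)}{\Omega}\Big]^\vee,\quad K_3=[\cos(\Omega t)]^\vee,\quad K_4=\Big[\tfrac{i\Omega\sin(\Omega t)}{\xi(1+\xi^2)}\Big]^\vee
\]
each satisfy $\|K_j\|_{L^\infty}\le C(t^{-1/3}+t^{-1/5})$. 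For $K_3$, I would write $\cos(\Omega t)=\tfrac12(e^{i\Omega t}+e^{-i\Omega t})$ and change variable $\alpha=x/t$, so the integral takes the form $\int e^{\pm ith(\xi,\pm\alpha)}\,d\xi$ with $h$ as in Lemma \ref{oscillatory-integral}, yielding the bound uniformly in $\alpha$. For $K_1$, since $(1+\xi^2)^{-1}$ is the Fourier symbol of convolution with $\tfrac12 e^{-|x|}\in L^1$ (cf.\ Lemma \ref{Lp-bessel}), I have $K_1=K_3\ast\tfrac12 e^{-|x|}$, and Young's inequality inherits the bound from $K_3$.

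The main obstacle is $K_2$ and $K_4$, whose symbols factor as $\sin(\Omega t)$ times the amplitudes $\xi/\Omega=\operatorname{sgn}(\xi)/r(\xi)$ and $\Omega/[\xi(1+\xi^2)]=\operatorname{sgn}(\xi)r(\xi)/(1+\xi^2)$, where $r(\xi):=\sqrt{1-\beta\xi^2+\xi^4}\ge r_0>0$ for $\beta<2$. Each amplitude is bounded, smooth on $\rr\setminus\{0\}$ with derivative in $L^1$, and has a single jump discontinuity at $\xi=0$, so it lies in $L^\infty\cap BV(\rr)$. To handle these kernels, I would upgrade Lemma \ref{oscillatory-integral} to an amplitude-dependent version by partitioning $\rr$ at the zeros of $h_\xi(\cdot,\alpha)$, integrating by parts in $\xi$ against $e^{\pm ith(\xi,\pm\alpha)}$ on each subinterval, and invoking Lemma \ref{van-der-corput} on the resulting antiderivatives. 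This produces
\[
\sup_{\alpha\in\rr}\Big|\int_\rr A(\xi)\,e^{ith(\xi,\alpha)}\,d\xi\Big|\le C(t^{-1/3}+t^{-1/5})\bigl(\|A\|_{L^\infty}+\|A\|_{BV}\bigr),
\]
which applied to $A=\xi/\Omega$ and $A=\Omega/[\xi(1+\xi^2)]$ closes the bounds on $K_2$ and $K_4$. Combining the four kernel estimates via Young's inequality delivers the stated dispersive bound.
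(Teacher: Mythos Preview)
Your approach is correct and follows a genuinely different route from the paper's in one respect. Both proofs diagonalize via the Fourier transform and reduce matters to the dispersive bound of Lemma \ref{oscillatory-integral}. The difference lies in how the symbol factors $1/\vartheta(\xi)$ and $\vartheta(\xi)$ (your $\Omega/\xi$ and $\xi/\Omega$) are handled. The paper replaces $\vartheta$ by $1+\xi^2$, absorbing the discrepancy into the data via $\Lambda^{\pm2}$; after Fubini the inner oscillatory integral is then amplitude-free and Lemma \ref{oscillatory-integral} applies directly. This substitution is legitimate because $\vartheta/(1+\xi^2)$ and its reciprocal differ from $1$ by smooth functions that are $O(\xi^{-2})$ with all derivatives of the expected order, hence have inverse Fourier transforms in $L^1$ and act as $L^1$ multipliers---but the paper leaves this implicit. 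Your route instead keeps the amplitudes $\operatorname{sgn}(\xi)/r(\xi)$ and $\operatorname{sgn}(\xi)r(\xi)/(1+\xi^2)$ and appeals to the standard $BV$-amplitude extension of Van der Corput, which is more self-contained and makes the estimate fully explicit. Both strategies work; the paper's is shorter once the multiplier fact is granted, while yours avoids that side issue at the cost of a mild extension of Lemma \ref{oscillatory-integral}.

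One small correction: in your amplitude argument you should partition at the zeros of $h_{\xi\xi}$, not of $h_\xi$. Lemma \ref{van-der-corput} requires convexity or concavity of the phase on each subinterval, which is governed by the sign of $h_{\xi\xi}$; moreover $h_{\xi\xi}$ is independent of $\alpha$, so the partition (and hence the resulting constant) is uniform in $\alpha$, whereas the zeros of $h_\xi(\cdot,\alpha)$ move with $\alpha$. With that fix, your integration-by-parts-against-the-antiderivative argument goes through: on each subinterval $F(\xi)=\int^\xi e^{ith}\,d\eta$ is bounded by Lemma \ref{van-der-corput}, and the jump of your amplitudes at $\xi=0$ causes no trouble since $\xi=0$ is always one of the partition points.
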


\proof
Since
\[
\vu(t)=S(t)\vdu(x)=\int_\rr\ee^{\ii x\xi}
\left(\begin{array}{lr}
\cos(t\xi\vtt(\xi))&\frac{\ii}{\vtt(\xi)}\sin(t\xi\vtt(\xi))\\ \\
\ii\vtt(\xi)\sin(t\xi\vtt(\xi))&\cos(t\xi\vtt(\xi))
\end{array}\right)\what{\vdu}(\xi)\dd\xi,
\]
where $\vtt(\xi)=\sqrt{1-\beta\xi^2+\xi^4}$. It is deduced from Fubini's theorem and Lemmas \ref{oscillatory-integral} and \ref{Lp-bessel} that
\[
\begin{split}
|\vu(t)|&=|S(t)\vdu(x)|
\lesssim
\sum\left|\int_\rr\left(\what{u_0}\pm\frac{1}{\vtt(\xi)} \widehat{{v_0}}\right)\ee^{\ii t\xi(\vtt(\xi)\pm x/t)}\dd\xi\right|
+
\sum\left|\int_\rr\left(\what{v_0}\pm\vtt(\xi) \widehat{{u_0}}\right)\ee^{\ii t\xi(\vtt(\xi)\pm x/t)}\dd\xi\right|
\\
&
\lesssim
\sum\left|\int_\rr\left(\what{u_0}\pm \widehat{\Lambda^{-2}{v_0}}\right)\ee^{\ii t\xi(\vtt(\xi)\pm x/t)}\dd\xi\right|
+
\sum\left|\int_\rr\left(\what{v_0}\pm \widehat{\Lambda^{-2}{u_0}}\right)\ee^{\ii t\xi(\vtt(\xi)\pm x/t)}\dd\xi\right|
\\
&
\lesssim
\sum\int_\rr\left|u_0(z)\pm\Lambda^{-2}v_0(z)\right|\left|\int_\rr\ee^{\ii t\xi(\vtt(\xi)\pm x/t-z/t)}\dd\xi\right|\dd z\\
&\para
+
\sum\int_\rr\left|v_0(z)\pm\Lambda^{-2}u_0(z)\right|\left|\int_\rr\ee^{\ii t\xi(\vtt(\xi)\pm x/t-z/t)}\dd\xi\right|\dd z,
\end{split}\]
where the sums are over all two sign combinations. Therefore, we obtain from Lemma \ref{oscillatory-integral} that
\[
\begin{split}
|\vu(t)|&
\lesssim
\sup_{\al\in\rr}\left|\int_\rr\ee^{\ii th(\xi,\al)}\dd\xi\right|\left(\|\vdu\|_{L^1(\rr)\times L^1(\rr)}+\|\Lambda^{-2}v_0\|_{L^1(\rr)}+\|\Lambda^{2}u_0\|_{L^1(\rr)}\right)
\\
&
\lesssim \left(t^{-1/3}+t^{-1/5}\right)\left(\|v_0\|_{L^1(\rr)}+\|\Lambda^{2}u_0\|_{L^1(\rr)}\right).
\end{split}
\]
for $t>0$.
Hence, for some $C>0$, it is concluded
\[
\begin{split}
|\vu(t)|&
\leq C
\left(t^{-1/3}+t^{-1/5}\right)\left(\|v_0\|_{L^1(\rr)}+\|\Lambda^{2}u_0\|_{L^1(\rr)}\right).
\end{split}
\]
\fim

\indent A proof of Theorem \ref{time-bound-theorem} is now in sight.\\

\noindent\textbf{Proof of Theorem \ref{time-bound-theorem}.}\q
Let $\vec w(t)=S(t)\vdu$, then $\vec w$ satisfies
\begin{equation}\label{integral-form-0}
\vec w_t+\left(\begin{array}{lr}
0&-1\\
-1-\beta\partial_x^2-\partial_x^4&0
\end{array}\right)\vec w_x=0,\para\mbox{with}\q\vec w(0)=\vdu.
\end{equation}
Then the solution $\vu(t)$ of \eqref{E:B6_system} can be written
\[
\vu(t)=\vec w(t)-\partial_x\int_0^t S(t-\tau)\left(\begin{array}{c}
0\\f(u(\tau))
\end{array}\right)\dd\tau.
\]
We should estimate
\begin{equation}\label{integral-form-1}
\vec U(t)=\vec W(t)-\int_0^tS(t-\tau)\left(\begin{array}{c}
0\\f(u(\tau))
\end{array}\right)\dd\tau,
\end{equation}
where
\[
\vec U(x,t)=\int_{-\infty}^x\vu(z,t)\dd z\q\mbox{and}\q\vec W(x,t)=\int_{-\infty}^x\vec w(z,t)\dd z.
\]
First observe using \eqref{integral-form-0} that
\[
\vec W(t)=\vec U_0-\int_0^t S(\tau)
\left(\begin{array}{lr}
0&-1\\
-1-\beta\partial_x^2-\partial_x^4&0
\end{array}\right)
\vdu\dd\tau,
\]
where $\vec U_0(x)=\int_{-\infty}^x\vdu(z)\dd z$. Now Lemma \ref{est-lin-oper} implies that
\[\begin{split}
|\vec W(x,t)|&\lesssim
\|\vdu\|_{L^1(\rr)\times L^1(\rr)}
+
\left(\|\Lambda^2 u_0\|_{L^1(\rr)}+\|\Lambda^2 v_0\|_{L^1(\rr)}\right)\int_0^t\left(\tau^{-1/3}+\tau^{-1/5}\right)\dd\tau
\\
&
\lesssim
\left(\|\Lambda^2 u_0\|_{L^1(\rr)}+\|\Lambda^2 v_0\|_{L^1(\rr)}\right)
\left(1+t^{2/3}+t^{4/5}\right).
\end{split}\]
Setting
\[
\vec Y(x,t)=\int_0^tS(t-\tau)\left(\begin{array}{c}
0\\f(u(\tau))
\end{array}\right)\dd\tau,
\]
and using Lemma \ref{est-lin-oper} again, it follows that
\begin{equation}\label{est-1}
|\vec Y(x,t)|\lesssim \int_0^t\left((t-\tau)^{-1/3}+(t-\tau)^{-1/5}\right)\|f(u(\tau))\|_{L^1(\rr)}\dd\tau.
\end{equation}
On the other hand, it is deduced from Cauchy-Schwarz inequality that
\begin{equation}\label{est-2}
|\vec Y(x,t)|\lesssim \int_0^t
\int_\rr\left(1+\frac{1}{\sqrt{1-\beta\xi^2+\xi^6}}\right)|f(\what{u(\xi,\tau)})|\dd\xi
\dd\tau
\lesssim
\int_0^t\|f(u(\cdot,\tau))\|_{H^2(\rr)}\dd\tau.
\end{equation}
Since $H^2(\rr)\hookrightarrow L^\infty(\rr)$ and $|f(s)|=O(|s|^p)$ and $|f'(s)=O(|s|^{p-1})|$ as $s\to0$, for $p>1$, it transpires that $\|f(u)\|_{L^1(\rr)}\leq C$, provided $p\geq2$, for some positive constant $C$ which depends only on $f$ and $\sup_{t\in[0,T)}\|\vu(t)\|_\x$. Hence, if $p\geq2$,
\[
|\vec Y(x,t)|\lesssim
\int_0^t
\left((t-\tau)^{-1/3}+(t-\tau)^{-1/5}\right)\dd\tau
\leq C\left(t^{2/3}+t^{4/5}\right).
\]

If $1<p<2$, it is straightforward to check that $\|f(u)\|_{H^{1,2/p}(\rr)}\leq C$, for some $C>0$. Since \eqref{est-1} and \eqref{est-2} hold for any $f\in L^1(\rr)\cap H^2(\rr)$, a straightforward interpolation thus can be applied for the mapping $S(t-\tau)$ as in \eqref{est-1} and \eqref{est-2}. Thus the same argument proves that
\[\begin{split}
|\vec Y(x,t)|&\lesssim
\int_0^t\left((t-\tau)^{-1/3}+(t-\tau)^{-1/5}\right)^{p-1}\|f(u(\cdot,\tau))\|_{H^{1,2/p}(\rr)}\dd\tau\\
&
\leq C
\left(t^{1-(p-1)/3}+t^{1-(p-1)/5}\right).
\end{split}\]
By combining the estimates of $\vec Y$ and $\vec W$, the proof of Theorem \ref{time-bound-theorem} is now completed.
\fim

Given $\vff\in\g$ and $\epsilon>0$, we define the ``tube''
\[
\Omega_{\vff,\eps}=\left\{u\in\x;\;\inf_{v\in\mathcal{O}_{\vff} }\|v-u\|_\x<\eps\right\}
\]
and the operator
\[
H=L''(\vff)=E''(\vff)+cQ''(\vff).
\]
The main instability result is the following.

\begin{theorem}\label{T:instability}
Suppose $c^2<1$ and $\beta<\beta_*$. If there exists $\vec\psi\in\x$ such that $\partial_x\vp\in L^1(\rr)\times L^1(\rr)$ and $\vp\in H^2(\rr)\times H^2(\rr)$
\begin{enumerate}
  \item $\left\langle Q'(\vec\ff),\partial_x\vec\psi\right\rangle=0$,
  \item $\left\langle H\partial_x\vec\psi,\partial_x\vec\psi\right\rangle<0$,
\end{enumerate}
  then $\mathcal{O}_{\vff}$ is $\x$-unstable.
\end{theorem}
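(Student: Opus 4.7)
I would argue by contradiction along the Bona--Sachs/Liu lines (\cite{bona-sachs,liu1}), using Theorem~\ref{time-bound-theorem} to control antiderivatives. Suppose $\mathcal{O}_{\vff}$ is $\x$-stable, and for each small $\lambda>0$ take initial data $\vu_0^\lambda=\vff+\lambda\,\pa_x\vp$ (mollified if necessary to sit in $\x$). Since $\|\vu_0^\lambda-\vff\|_\x=O(\lambda)$, stability forces the solution $\vu^\lambda(t)$ of \eqref{E:B6_system} to remain in $\Omega_{\vff,\eps}$ for all $t\geq 0$, for any prescribed $\eps>0$, once $\lambda$ is small enough. The implicit function theorem, applied using $\vff'\neq 0$, then yields a $C^1$ modulation parameter $r(t)$ with
\[
\vu^\lambda(t)=\tau_{r(t)}\vff+\vec h(t),\qquad \<\vec h(t),\tau_{r(t)}\vff'\>=0,\qquad \|\vec h(t)\|_\x=O(\eps).
\]

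Introduce a virial-type Lyapunov functional of the form
\[
A(t)=\int_\rr\tau_{r(t)}\pa_x\vp(x)\cdot\vec H(x,t)\dd x,\qquad \vec H(x,t)=\int_{-\infty}^x\vec h(z,t)\dd z
\]
(or a symplectic variant tuned so that $A'(t)$ exposes the quadratic form in condition (2)). Since $\vec h$ satisfies a Boussinesq system whose linear part coincides with \eqref{E:B6_system} and the antiderivative of the exponentially decaying $\tau_{r(t)}\vff$ (Theorem~\ref{T:regularity_decay}) is uniformly bounded in $x$ and $t$, Theorem~\ref{time-bound-theorem} applied to $\vec h$ gives $\|\vec H(\cdot,t)\|_{L^\infty\times L^\infty}\leq C(1+t^{2/3}+t^{4/5})$; combined with $\pa_x\vp\in L^1(\rr)\times L^1(\rr)$ this produces the sublinear upper bound
\[
|A(t)|\leq C\|\pa_x\vp\|_{L^1\times L^1}(1+t^{2/3}+t^{4/5}),\qquad t\geq 0.
\]

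The decisive step, and the principal obstacle, is a matching \emph{linear} lower bound $|A'(t)|\geq c_0>0$ uniform in~$t$. Differentiating $A$ using $\int_{-\infty}^x\vu_t\dd z=(v,\,u+\beta u_{xx}+u_{xxxx}-f(u))$ and expanding around $\tau_{r(t)}\vff$ via $L'(\vff)=0$, one must: (i) use condition~(1) to annihilate the first-order $\dot r$-contribution (which a priori appears as a nonzero multiple of $\<Q'(\vff),\pa_x\vp\>=0$); (ii) use the modulation orthogonality $\<\vec h,\tau_{r(t)}\vff'\>=0$ to discard the zero mode of $H$; (iii) identify the leading contribution as a positive multiple of $-\<H\pa_x\vp,\pa_x\vp\>>0$ supplied by condition~(2); and (iv) absorb the cubic remainder from the homogeneous nonlinearity~$f$. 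Integration then yields $|A(t)|\gtrsim c_0\,t$, contradicting the sublinear upper bound. The delicate part is~(iii), which amounts to a coercivity estimate for the Hessian $H$ on the codimension-two subspace cut out by condition~(1) and by the modulation --- the sixth-order analogue of the virial identity carried out for the classical Boussinesq equation in \cite{liu1}, with the higher-order dispersion adding algebraic complexity but not qualitative novelty.
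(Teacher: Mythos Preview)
Your outline contains a genuine gap at the ``decisive step'' (iii), and the mechanism you name there is not the one that works.

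You claim that after removing the $\dot r$-term and the zero mode, the leading part of $A'(t)$ is a positive multiple of $-\langle H\partial_x\vec\psi,\partial_x\vec\psi\rangle$, and that this reduces to ``a coercivity estimate for the Hessian $H$ on the codimension-two subspace''. Both statements are off. First, $A'(t)$ is (to leading order) \emph{linear} in the perturbation $\vec h(t)$: expanding $L'(\tau_{r}\vff+\vec h)=H\vec h+O(\|\vec h\|^2)$, your computation produces a pairing of the form $\langle H\vec h(t),\partial_x\vec\psi\rangle$, not the fixed number $\langle H\partial_x\vec\psi,\partial_x\vec\psi\rangle$. At $t=0$ you have $\vec h(0)\approx\lambda\,\partial_x\vec\psi$ and the two agree, but for $t>0$ stability only keeps $\|\vec h(t)\|_\x$ small; it does nothing to keep $\vec h(t)$ aligned with $\partial_x\vec\psi$, so there is no reason $A'(t)$ stays bounded away from zero. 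Second, ``coercivity of $H$ on the codimension-two subspace'' is exactly what you \emph{do not} have: condition~(2) says $\partial_x\vec\psi$ lies in that subspace and gives $H$ a negative direction there. Coercivity would yield stability, not instability.

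The paper obtains the uniform lower bound by a completely different device. One builds a $C^1$ vector field $B_{\vec\psi}$ on the tube (via the modulation $\alpha(\vec u)$ of Lemma~\ref{implicit}) with $B_{\vec\psi}(\vff)=\partial_x\kk\vec\psi$, and takes initial data along its flow, $\vec u_{0,j}=\uu(\delta_j)\vff$. A second-order Taylor expansion of $L$ along this flow, together with the fact that $\vff$ \emph{minimizes $L$ on the Nehari set} $\mathscr{N}=\{P=0\}$ (Theorem~\ref{ground}) and an implicit-function step crossing $\mathscr{N}$, yields the key functional inequality
\[
L(\vff)\le L(\vec u)+\p(\vec u)\,s,\qquad \p(\vec u)=\langle L'(\vec u),B_{\vec\psi}(\vec u)\rangle,
\]
for every $\vec u$ in a fixed tube and some $|s|<\sigma$. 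Since $L$ is conserved and $L(\vec u_{0,j})<L(\vff)$ by construction, this gives $-\p(\vec u_j(t))\ge (L(\vff)-L(\vec u_{0,j}))/\sigma=\eta_j>0$ for all $t$; the Lyapunov functional is $A(t)=\int\vec\psi(x-\alpha(\vec u_j))\cdot\vec u_j\,\dd x$ and one checks directly that $A'(t)=-\p(\vec u_j(t))$. The Nehari minimization property---not any coercivity of $H$---is what converts the single negative direction in condition~(2) into a uniform-in-time lower bound.

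A secondary point: you apply Theorem~\ref{time-bound-theorem} to the remainder $\vec h$ rather than to the full solution $\vec u$. The theorem is stated for solutions of \eqref{E:B6_system}, and $\vec h$ satisfies only a perturbed system with a moving forcing term $\tau_{r(t)}\vff$; the paper instead bounds $\int\vec\psi(\cdot-\alpha)\cdot\vec u$ directly via Lemma~\ref{time-estimate}, using $\partial_x\vec\psi\in L^1\times L^1$ and $\vec\psi\in H^2\times H^2$ to handle the fact that $\vec\psi$ itself need not decay.
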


\begin{lemma}\label{implicit}
Let  $c^2<1$ and $\beta<\beta_\ast$ and $\vff\in\g$ be fixed. There exist $\eps_0>0$ and a unique $C^2$ map $\al:\Omega_{\vff,\eps_0}\to\rr$ such that $\al(\vff)=0$, and for all $\vu\in \Omega_{\vff,\eps_0}$ and any $r\in\rr$,
\begin{enumerate}[(i)]
\item $\<\vu(\cdot-\al(\vu)),\partial_x\vff\>=0$,
\item $\al(\vu(\cdot+r))=\al(\vu)-r$,
\item $\al'(\vu)=\frac{1}{{\left\<\vu,\partial_x^2\vff(\cdot-\al(\vu))\right\>}}\partial_x\vff(\cdot-\al(\vu))$, and
\item $\<\al'(\vu),\vu\>=0$, if $\vu\in\mathcal{O}_{\vff}$.
\end{enumerate}
\end{lemma}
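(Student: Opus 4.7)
The plan is to invoke the implicit function theorem. Define
\[
F: \x \times \rr \to \rr, \qquad F(\vu, \alpha) = \langle \vu(\cdot - \alpha), \partial_x \vff \rangle,
\]
so that (i) amounts to solving $F(\vu, \alpha(\vu)) = 0$. A change of variables rewrites this as $F(\vu, \alpha) = \langle \vu, \partial_x \vff(\cdot + \alpha) \rangle$, which is linear in $\vu$; since by Theorem \ref{T:regularity_decay} the ground state $\vff$ is smooth and exponentially decaying, the map $\alpha \mapsto \partial_x \vff(\cdot + \alpha)$ is a $C^2$ curve into $\x$, and hence $F \in C^2(\x \times \rr)$. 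At the base point $(\vff, 0)$, the decay of $\vff$ gives
\[
F(\vff, 0) = \tfrac{1}{2} \int_\rr \partial_x |\vff|^2 \, dx = 0, \qquad \partial_\alpha F(\vff, 0) = \langle \vff, \partial_x^2 \vff \rangle = -\|\partial_x \vff\|_{L^2 \times L^2}^2 < 0,
\]
so the implicit function theorem produces a unique $C^2$ function $\alpha$ on an $\x$-neighborhood of $\vff$ with $\alpha(\vff) = 0$ and $F(\vu, \alpha(\vu)) = 0$, which is (i).

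To upgrade this local construction to one defined on the entire tube $\Omega_{\vff, \epsilon_0}$, and simultaneously to obtain (ii), observe that a change of variables in the integral yields the translation covariance $F(\tau_r \vu, \alpha) = F(\vu, \alpha \pm r)$; the uniqueness clause of the implicit function theorem then forces the shift identity in (ii). Using (ii), for any $\vu \in \Omega_{\vff, \epsilon_0}$ we pick $r$ with $\|\tau_{-r}\vu - \vff\|_\x < \epsilon_0$, apply the locally constructed $\alpha$ to $\tau_{-r}\vu$, and define $\alpha(\vu)$ by the shift rule in (ii); self-consistency on overlaps is precisely what (ii) provides.

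Part (iii) is implicit differentiation: from $F(\vu, \alpha(\vu)) = 0$ we obtain
\[
d\alpha(\vu)[\vec h] = -\frac{D_\vu F[\vec h]}{\partial_\alpha F(\vu, \alpha(\vu))},
\]
with $D_\vu F[\vec h] = \langle \vec h(\cdot - \alpha(\vu)), \partial_x \vff \rangle$ and, after an integration by parts, $\partial_\alpha F(\vu, \alpha(\vu)) = \pm \langle \vu, \partial_x^2 \vff(\cdot - \alpha(\vu)) \rangle$; identifying the $L^2$-Riesz representative yields the stated formula for $\alpha'(\vu)$. Part (iv) is then immediate: if $\vu = \tau_r \vff \in \mathcal{O}_{\vff}$, then by (ii) $\partial_x \vff(\cdot - \alpha(\vu))$ is, up to sign, $\partial_x \vu$, so
\[
\langle \alpha'(\vu), \vu \rangle \;\propto\; \langle \partial_x \vu, \vu \rangle = \tfrac{1}{2} \int_\rr \partial_x |\vu|^2 \, dx = 0
\]
by the decay of $\vu$ at infinity. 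The only genuine technical point is verifying that $F$ is jointly $C^2$ on $\x \times \rr$, which reduces to smoothness of translation applied to the smooth, exponentially decaying function $\partial_x \vff$, and is supplied by Theorem \ref{T:regularity_decay}; beyond this, the argument is bookkeeping around the translation sign conventions.
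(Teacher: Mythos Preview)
Your argument is correct and is precisely the standard implicit function theorem construction that the paper invokes by citation (it refers to Theorem~3.1 of \cite{ribeiro} and Lemma~3.8 of \cite{liutom}, both of which carry out exactly the computation you wrote). Your hedging with $\pm$ on the translation and $\partial_\alpha F$ signs is appropriate, since the paper's own sign conventions in (ii) and (iii) are not entirely internally consistent; once a convention is fixed, the bookkeeping resolves as you indicate.
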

\proof
The proof follows the line of reasoning laid down in Theorem 3.1 in \cite{ribeiro} and Lemma 3.8 in \cite{liutom}.
\fim

Let $\vp$ be as in  Theorem \ref{T:instability}. Define another vector field $\bvp$ by
\[
\bvp(\vu)=\kk\partial_x\vp(\cdot-\al(\vu))-
\frac
{\left\<\vu,\partial_x\vp(\cdot-\al(\vu))\right\>}
{\left\<\vu,{\partial_x^2}{\vff(\cdot-\al(\vu))}\right\>}\kk\partial_x^2\vff(\cdot-\al(\vu)),
\]
for $\vu\in\Omega_{\vff,\eps}$, where
$\kk=\left(\begin{array}{ll}
0&1\\
1&0
\end{array}\right)$.
Geometrically, $\bvp$  can be interpreted as the derivative of the orthogonal component of $\tau_{-\al(\cdot)}\vp$ with regard to $\tau_{-\al(\cdot)}\partial_x\vff$.

\begin{lemma}\label{vector-field}
Let $\vp$ be as in  Theorem \ref{T:instability}. Then the map $\bvp:\Omega_{\vff,\eps_0}\to\x$ is $C^1$ with bounded derivative. Moreover,
\begin{enumerate}[(i)]
\item $\bvp$ commutes with translations,
\item $\<\bvp(\vu),\kk\vu\>=0$, if $\vu\in \Omega_{\vff,\eps_0}$,
\item $\bvp(\vff)=\partial_x\kk\vp$, if $\<\vff,\partial_x\vp\>=0$.
\end{enumerate}
\end{lemma}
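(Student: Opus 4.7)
The plan is to verify the regularity claim and the three properties by reducing everything to the smoothness of $\al$ supplied by Lemma \ref{implicit} together with direct computation using the algebraic structure of $\kk$. First I would check that $\bvp$ is well-defined on $\Omega_{\vff,\eps_0}$ for $\eps_0$ small enough, by showing that the denominator $\<\vu,\partial_x^2\vff(\cdot-\al(\vu))\>$ is uniformly bounded away from zero. At $\vu=\vff$ we have $\al(\vff)=0$, so an integration by parts gives $\<\vff,\partial_x^2\vff\>=-(1+c^2)\int\ff_x^2\,\dd x<0$. Since $\al$ is $C^2$, translations act continuously in $L^2$, and the inner product is bilinear continuous, the denominator depends continuously on $\vu$, so shrinking $\eps_0$ yields a uniform lower bound $|\<\vu,\partial_x^2\vff(\cdot-\al(\vu))\>|\geq\delta>0$ throughout $\Omega_{\vff,\eps_0}$.

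For the $C^1$ claim with bounded derivative I would invoke the chain rule. Lemma \ref{implicit} gives $\al:\Omega_{\vff,\eps_0}\to\rr$ of class $C^2$ with an explicit and bounded derivative. Using the smoothness and exponential decay of $\vff$ supplied by Theorem \ref{T:regularity_decay}, together with the assumed regularity of $\vp$, the maps $\vu\mapsto\partial_x\vp(\cdot-\al(\vu))$ and $\vu\mapsto\partial_x^2\vff(\cdot-\al(\vu))$ are $C^1$ into $\x$. Composing with the bilinear continuous inner products produces $C^1$ numerators, the denominator is $C^1$ and bounded below by $\delta$, and the quotient and product rules then make $\bvp$ itself a $C^1$ map $\Omega_{\vff,\eps_0}\to\x$. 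The derivative is bounded on the tube because $\Omega_{\vff,\eps_0}$ is a bounded subset of $\x$, the denominator is bounded below, and the $H^2$-norms of the fixed profiles $\vff$, $\vp$ and their derivatives are fixed constants.

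To establish (i)--(iii) I would compute directly. For (i), Lemma \ref{implicit}(ii) gives $\al(\tau_r\vu)=\al(\vu)-r$, hence $\partial_x\vp(\cdot-\al(\tau_r\vu))=\tau_r[\partial_x\vp(\cdot-\al(\vu))]$ and likewise for $\partial_x^2\vff$; a change of variable $y=x+r$ in the two integrals appearing in the scalar coefficient shows that coefficient is invariant under $\vu\mapsto\tau_r\vu$, whence $\bvp(\tau_r\vu)=\tau_r\bvp(\vu)$. For (ii), the matrix $\kk$ is symmetric and satisfies $\kk^2=I$, so $\<\kk\vec a,\kk\vec b\>=\<\vec a,\vec b\>$ for any $\vec a,\vec b\in\x$; plugging this into the definition of $\bvp$, the two terms in $\<\bvp(\vu),\kk\vu\>$ collapse to $\<\vu,\partial_x\vp(\cdot-\al(\vu))\>$ and cancel by construction. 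For (iii), at $\vu=\vff$ we have $\al(\vff)=0$, and the hypothesis $\<\vff,\partial_x\vp\>=0$ annihilates the correction term, leaving $\bvp(\vff)=\kk\partial_x\vp=\partial_x\kk\vp$ since $\kk$ is constant in $x$. The main technical obstacle is the $C^1$ argument in the second paragraph, which rests on the uniform lower bound of the denominator and on the $C^2$-dependence of $\al$; once that is in place, properties (i)--(iii) reduce to straightforward algebra.
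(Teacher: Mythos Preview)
Your proposal is correct and is precisely the direct verification that the paper has in mind; the paper itself does not give a proof but simply refers to Lemma~3.5 in \cite{angulo-1} and Lemma~3.3 in \cite{angulo-2}, where the argument proceeds exactly as you outline (nonvanishing of the denominator at $\vff$ extended by continuity, chain rule with the $C^2$ map $\al$, and the algebraic identities for $\kk$). One small caveat worth noting: for $\bvp$ to genuinely land in $\x=H^2(\rr)\times L^2(\rr)$ and be $C^1$ there, you need slightly more regularity on the second component of $\vp$ than the bare hypothesis $\vp\in H^2\times H^2$ provides, since $\kk$ swaps components and places $\partial_x\psi_2$ in the $H^2$ slot; in the paper's actual applications (Theorems~\ref{T:instability_d} and \ref{T:instability_criteria}) the chosen $\vp$ are built from $\vff$ and are smooth with exponential decay, so this causes no trouble, but it is a slight looseness in the statement rather than in your argument.
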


\proof
The proof follows the same lines from the proof of Lemma 3.5 in \cite{angulo-1} or Lemma 3.3 in \cite{angulo-2}.
\fim
Before starting with the proof of Theorem \ref{T:instability}, we state and prove the following lemma which shows the boundedness of the Liapunov function (see \eqref{Liapunov}).
\begin{lemma}\label{time-estimate}
Let $\vp$ be as in Theorem \ref{T:instability}, $\vdu=(u_0,v_0)$ be in $\Omega_{\vff,\eps_3}$ such that $\Lambda u_0,\Lambda v_0\in L^1(\rr)$ and $f$ satisfy the assumptions of Theorem \ref{time-bound-theorem}. If $\vu(t)$ is a solution of \eqref{E:B6_system} which corresponds to the initial data $\vdu$ and $\vu(t)\in\Omega_{\vff,\eps_3}$, for $t\in [0,T]$, then
\begin{equation}\label{L:Estimate}
\left|\int_\rr\vp(x-\al(\vu(t)))\cdot\vu(x,t)\dd x\right|
\leq C\left(1+t^{2/3}+t^{4/5}\right)
\end{equation}
for $t\in[0,T)$, where $T$ is the maximum existence time for $\vu$,  and the constant $C>0$  depends on $\|\Lambda^2 u_0\|_{L^1(\rr)}$, $\|\Lambda^2 v_0\|_{L^1(\rr)}$, $f$ and $\vff$.
\end{lemma}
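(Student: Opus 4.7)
The plan is to reduce the integral to one involving the antiderivative $\vec U(x,t)=\int_{-\infty}^x\vu(z,t)\dd z$ via an integration by parts, and then quote Theorem \ref{time-bound-theorem}(i) to get a time-growth estimate on $\vec U$. Concretely, writing $\vec\psi=(\psi_1,\psi_2)$ and $\vu=(u,v)$, I would split
\[
\int_\rr\vp(x-\al(\vu(t)))\cdot\vu(x,t)\dd x
=\int_\rr \psi_1(x-\al)\,u(x,t)\dd x+\int_\rr \psi_2(x-\al)\,v(x,t)\dd x,
\]
and integrate each piece by parts. Since $\vp\in H^2(\rr)\times H^2(\rr)$, the Sobolev embedding $H^2(\rr)\hookrightarrow C_0(\rr)$ kills the boundary term at $+\infty$, while the boundary term at $-\infty$ vanishes by the very definition $\vec U(-\infty,t)=\vec 0$. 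Hence
\[
\int_\rr\vp(x-\al(\vu(t)))\cdot\vu(x,t)\dd x
=-\int_\rr\partial_x\vp(x-\al(\vu(t)))\cdot\vec U(x,t)\dd x.
\]

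Next I would estimate this in a crude $L^1$–$L^\infty$ way, using that $\partial_x\vp\in L^1(\rr)\times L^1(\rr)$ by hypothesis:
\[
\left|\int_\rr\vp(x-\al(\vu(t)))\cdot\vu(x,t)\dd x\right|
\leq \|\partial_x\vp\|_{L^1(\rr)\times L^1(\rr)}\,\sup_{x\in\rr}|\vec U(x,t)|.
\]
Translation invariance of the $L^1$ norm makes the $\al(\vu(t))$-shift irrelevant, so this bound is uniform in $t$.

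Finally I would invoke Theorem \ref{time-bound-theorem}(i), which is applicable since $p\geq 2$ and since the hypothesis $\Lambda^2u_0,\Lambda^2v_0\in L^1(\rr)$ (needed to apply the theorem, and reflected in the statement of the constant $C$) is in force. This yields
\[
\sup_{x\in\rr}|\vec U(x,t)|\leq C\bigl(1+t^{2/3}+t^{4/5}\bigr),
\]
with the constant $C$ depending on $\|\vdu\|_\x$, $f$ and $\sup_{t\in[0,T)}\|\vu(t)\|_\x$. Since $\vu(t)\in\Omega_{\vff,\eps_3}$ for all $t\in[0,T)$, the last supremum is controlled by $\|\vff\|_\x+\eps_3$, so the dependence ultimately reduces to $\|\Lambda^2u_0\|_{L^1}$, $\|\Lambda^2v_0\|_{L^1}$, $f$, and $\vff$, giving \eqref{L:Estimate}.

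The step I expect to require the most care is the integration-by-parts, specifically justifying the vanishing of the boundary terms and the fact that $\vec U(x,t)$ is defined pointwise and bounded (rather than merely distributional); this is precisely what Theorem \ref{time-bound-theorem} supplies, so beyond quoting it the only subtlety is the trivial observation that translating $\vp$ by $\al(\vu(t))$ preserves $L^1$ norms and leaves the integration-by-parts identity intact.
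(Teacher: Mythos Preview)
Your proof is correct and in fact cleaner than the paper's own argument. The paper does \emph{not} integrate by parts; instead it subtracts a Heaviside step, writing
\[
\int_\rr\vp(x-\al)\cdot\vu\,\dd x
=\int_\rr\bigl(\vp(x-\al)-\vec\gamma\,\hb(x-\al)\bigr)\cdot\vu\,\dd x
+\vec\gamma\cdot\int_{\al}^{\infty}\vu\,\dd x,
\]
with $\vec\gamma=\int_\rr\partial_x\vp$. The second term is controlled by Theorem~\ref{time-bound-theorem}, while the first is bounded by Cauchy--Schwarz once one shows $\vp-\vec\gamma\hb\in L^2\times L^2$; the paper does this via Minkowski's integral inequality, which at the end requires the weighted bound $\int_\rr|\partial_x\vp(x)|\sqrt{|x|}\,\dd x<\infty$. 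Your route sidesteps this entirely: by integrating by parts and pairing $\partial_x\vp\in L^1\times L^1$ against $\vec U\in L^\infty$, you use only the hypotheses actually stated and arrive at the same estimate in one line. The only point worth flagging is that the boundary term at $+\infty$ vanishes because $\vp\to0$ \emph{and} $\vec U$ is bounded (which you correctly attribute to Theorem~\ref{time-bound-theorem}); both facts are needed.
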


\proof
Let  $\hb$ be the Heaviside function and $\vec\ga=\int_\rr\partial_x\vp(x)\dd x$. Then the left hand side of \eqref{L:Estimate} may be written
\[
\int_\rr\left(\vp\left(x-\al(\vu(t))\right)-\vec\ga\hb(x-\al(\vu(t)))\right)\cdot\vu(x,t)\dd x
+
\vec\ga\cdot\int_{\al(\vu(t))}^{+\infty}\vu(x,t)\dd x.
\]
So it follows from Cauchy-Schwarz inequality and Theorem \ref{time-bound-theorem} that
\[\begin{split}
&\left|\int_\rr\vp(x-\al(\vu(t)))\cdot\vu(x,t)\dd x\right|\\
&\q\q\leq
\left\|\vp-\vec\ga\hb\right\|_{\lt\times\lt}\left\|\vu(t)\right\|_{\lt\times\lt}
+
C\left(1+t^{2/3}+t^{4/5}\right).
\end{split}\]
We show that $\vp-\vec\ga\hb\in\lt\times\lt$. Indeed, Minkowski's inequality yields that
\[
\begin{split}
\left\|\vp-\vec\ga\hb\right\|_{\lt\times\lt}&
\leq
\left(\int_{-\infty}^0|\vp(x)|^2\dd x\right)^{1/2}
+
\left(\int^{+\infty}_0|\vp(x)-\vec\ga\hb(x)|^2\dd x\right)^{1/2}\\
&
=\left(\int_{-\infty}^0|\vp(x)|^2\dd x\right)^{1/2}
+
\left(\int^{+\infty}_0\left|\int_x^{+\infty}\partial_x\vp(z)\dd z\right|^2\dd x\right)^{1/2}\\
&
\leq\|\vp\|_{\lt\times\lt}
+
\int_\rr|\partial_x\vp(x)|\sqrt{|x|}\dd x\\
&\leq\|\vp\|_{H^2(\rr)\times H^2(\rr)}<+\infty.
\end{split}
\]
Hence, for $t\in[0,T)$, we obtain
\[
\left|\int_\rr\vp(x-\al(\vu(t)))\cdot\vu(x,t)\dd x\right|
\leq C\left(1+t^{2/3}+t^{4/5}\right).
\]
\fim

All the elements are now in place to prove the instability result in
Theorem \ref{T:instability}.

\noindent\textbf{Proof of Theorem \ref{T:instability}.}\q
First we claim that there exist $\eps_3>0$ and $\si_3>0$ such that for each $\vdu\in\Omega_{\vff,\eps_3}$,
\begin{equation}\label{taylor-exp-1}
L(\vff)\leq L(\vdu)+\p(\vdu)s,
\end{equation}
for some $s\in(-\si_3,\si_3)$, where $\p(\vu)=\<L'(\vu),\bvp(\vu)\>$.

For $\vdu\in\Omega_{\vff,\eps_0}$, where $\eps_0$ is given in Lemma \ref{implicit}, consider the initial value problem
\begin{equation}\label{initial-value}
\begin{array}{ll}
\frac{{\rm d} }{{\rm d} s}\vu(s)=\bvp(\vu(s))\\
\vu(0)=\vdu.
\end{array}
\end{equation}
By Lemma \ref{vector-field}, we have that \eqref{initial-value} admits for each $\vdu\in\Omega_{\vff,\eps_0}$ a unique maximal solution $\vu\in C^2((-\si,\si);\Omega_{\vff,\eps_0})$, where $\si\in(0,+\infty]$. Moreover for each $\eps_1<\eps_0$, there exists $\si_1>0$ such that $\si(\vdu)\geq\si_1$, for all $\vdu\in \Omega_{\vff,\eps_1}$. Hence we can define for fixed $\eps_1$, $\si_1$, the following dynamical system
\[
\begin{array}{cc}
\uu:(-\si_1,\si)\times \Omega_{\vff,\eps_1}\longrightarrow\Omega_{\vff,\eps_0}\\
\q\q(s,\vdu)\mapsto\uu(s)\vdu,
\end{array}\]
where $s\to\uu(s)\vdu$ is the maximal solution of \eqref{initial-value} with initial data $\vdu$. It is also clear from Lemma \ref{vector-field} that $\uu$ is a $C^1-$function, also we have that for each $\vdu\in \Omega_{\ff,\eps_1}$, the function $s\to\uu(s)\vdu$ is $C^2$ for $s\in(-\si_1,\si_1)$, and the flow $s\to\uu(s)\vdu$ commutes with translations. One can also observe from the relation
\[
\uu(t)\vff=\vff+\int_0^t\tau_{\al(\uu(s)\vff)}\partial_x\vp\dd s
-\int_0^t\rho(s)\tau_{\al(\uu(s)\vff)}\partial_x^2\vff\dd s
\]
that $\uu(s)\vff\in H^r(\rr)$, $r>3/2$, for all $s\in(-\si_1,\si_1)$, where
\[
\rho(s)=\frac{\left\<\uu(s)\vff,\tau_{\al(\uu(t)\vff)}\partial_x\vp\right\>}
{\left\<\uu(t)\vff,\tau_{\al(\uu(t)\vff)}\partial_x^2\vff\right\>}.
\]
Now we obtain from Taylor's theorem that there is $\varrho\in(0,1)$ such that
\[
L(\uu(s)\vdu)=L(\vdu)+\p(\vdu)s+\frac{1}{2}R(\uu(\varrho s)\vdu)s^2,
\]
where $R(\vu)=\<L''(\vu)\bvp,\bvp(\vu)\>+\<L''(\vu),\bvp'(\vu)(\bvp(\vu))\>$. Since $R$ and $\p$ are continuous, $L'(\vff)=0$ and $R(\vff)<0$, there exists $\eps_2\in(0,\eps_1]$ and $\si_2\in(0,\si_1]$ such that \eqref{taylor-exp-1} holds for $\vdu\in B(\vff,\eps_2)$ and $s\in (-\si_2,\si_2)$. On the other hand, it is straightforward to verify that
\[
P(\uu(s)\vdu)\bigg|_{(\vdu,s)=(\vff,0)}=0\q\mbox{and}\q
\frac{\dd}{\dd s}P(\uu(s)\vdu)\bigg|_{(\vdu,s)=(\vff,0)}=\<P'(\vff),\partial_x\vp\>,
\]
where $P$ is defined in Theorem \ref{ground}. We show that $\<P'(\vff),\partial_x\vp\>\neq0$. Otherwise, $\partial_x\vp$ would be tangent to $\n$ at $\vff$, where $\n$ is defined in Theorem \ref{ground}. Hence, $\<L''(\vff)\partial_x\vp,\partial_x\vp\>\geq0$, since $\vff$ minimizes $L$ on $\n$ by Theorem \ref{ground}. But this contradicts Theorem \ref{ground}. Therefore, by the implicit function theorem, there exist $\eps_3\in(0,\eps_2)$
and $\si_3\in(0,\si_2)$ such that for all $\vdu\in B{\vff,\eps_3}$, there exists a unique $s=s(\vdu)\in(-\si_3,\si_3)$ such that $P(\uu(s)\vdu)=0$. Then applying \eqref{taylor-exp-1} to $(\vdu,s(\vdu))\in B{\vff,\eps_3}\times(-\si_3,\si_3)$ and using the fact  $\vff$ minimizes $L$ on $\n$, we have that for $\vdu\in B{\vff,\eps_3}$ there exists $s\in(-\si_3,\si_3)$ such that $S(\vff)\leq L(\uu(s)\vdu)\leq L(\vdu)+\p(\vdu)s$. This inequality can be extended to $\Omega_{\vff,\eps_3}$ from the gauge invariance.

Since $\uu(s)\vdu$ commutes with $\tau_r$, it follows by replacing $\vdu$ with $\uu(s)\vdu$ in \eqref{taylor-exp-1} and then $\delta=-s$ that
\begin{equation}\label{taylor-exp-2}
L(\vff)\leq L(\uu(\delta)\vff)-\p(\uu(\delta)\vff)\delta,
\end{equation}
for all $\delta\in(-\si_3,\si_3)$. Moreover, using Taylor's theorem again and the fact  $\p(\vff)=0$, it follows that
the map $\delta\mapsto L(\uu(\delta)\vff)$ has a strict local maximum at $\delta=0$. Hence, we obtain
\begin{equation}\label{taylor-exp-3}
L(\uu(\delta)\vff)<L(\vff),\q\q\delta\neq0,\;\delta\in(-\si_4,\si_4),
\end{equation}
where $\si_4\in(0,\si_3]$. Thus it follows from \eqref{taylor-exp-2} that
\begin{equation}\label{taylor-exp-4}
\p(\uu(\delta)\vff)<0,\q\q\delta\in(0,\si_4).
\end{equation}

Now let $\delta_j\in(0, \si_4)$ be such that $\delta_j\to0$ as $j\to\infty$, and consider the sequences of initial data $\vec u_{0,j}=\uu(\delta_j)\vff$. It is clear that $\vec u_{0,j}\in H^r(\rr)$, $r > 3/2$ for all positive integers $j$ and $\vec u_{0,j}\to\vff$ in $\x$ as $j\to\infty$. We need only verify that the solution $\vec u_j(t)=\uu(t)\vec u_{0,j}$ of \eqref{E:B6_system} with $\vec u_j(0) = \vec u_{0,j}$ escapes from $\Omega_{\vff,\eps_3}$, for all positive integers $j$ in finite time.  Define
\[
T_j=\sup\left\{t'>0;\,\vec u_j(t)\in \Omega_{\vff,\eps_3},\;\forall t\in(0,t')\right\}
\]
and
\[
\ddd=\left\{\vu\in\Omega_{\vff,\eps_3};\,L(\vu)<L(\vff),\;\p(\vu)<0\right\}.
\]
It follows from \eqref{taylor-exp-1} that for all $j\in\N$ and $t\in (0,T_j)$, there exists $s=s_j(t)\in(-\si_3,\si_3)$ satisfying $L(\vff)\leq L(\vec u_{0,j})+\p(\vec u_j(t))s$. By \eqref{taylor-exp-3} and \eqref{taylor-exp-4}, $u_{0,j}\in\ddd$; and therefore $\vec u_j(t)\in\ddd$ for all $t\in[0,T_j]$. Indeed, if $\p(\vec u_j(t_0))>0$ for some $t_0\in[0, T_j]$, then the continuity of $\p$ implies that there exists some $t_1\in[0, T_j]$ satisfying $\p(\vec u_j(t_1))=0$, and consequently  $L(\vff)\leq L(\vec u_{0,j})$, which contradicts $\vec u_{0,j}\in\ddd$. Hence,  $\ddd$ is bounded away from zero and
\begin{equation}\label{bound}
-\p(\vec u_j)\geq\frac{L(\vff)-L(\vec u_{0,j})}{\si_3}=\eta_j>0,\q\forall t\in [0,T_j].
\end{equation}
Now suppose that for some $j$, $T_j=+\infty$. Then we define a Liapunov function
\begin{equation}\label{Liapunov}
A(t)=\int_\rr\vp(x-\al(\vec u_j))\cdot\vec u_j(x,t)\dd x,\q t\in[0,T_j].
\end{equation}
Since $$\frac{{\rm d} \vec u_j}{{\rm d}t}=\partial_x\kk E'(\vec u_j),$$ then we have
\[
\begin{split}
\frac{{\rm d} A}{{\rm d} t}&=
\left\<\al'(\vec u_j(t)),\frac{{\rm d}\vec u_j}{{\rm d}t}\right\>
\left\<\partial_x\vp(\cdot-\al(\vec u_j(t))),\vec u_j(t)\right\>
+\left\<\vp(\vec u_j(t))),\frac{{\rm d}\vec u_j}{{\rm d} t}\right\>\\
&=\left\<\<\partial_x\vp(\vec u_j(t))),\vec u_j(t)\>\partial_x\kk\al'(\vec u_j(t))
-\partial_x\kk\vp(\vec u_j(t))),E'(\vec u_j(t))\right\>\\
&=-\left\<B_\psi(\vec u_j(t)),L'(u_j(t))\right\>+c\left\<B_\psi(\vec u_j(t)),Q'(\vec u_j(t))\right\>
=-\p(\vec u_j(t)),
\end{split}
\]
for $t\in[0,T_j]$. Therefore it is deduced from \eqref{bound} that
\[
-\frac{\dd A}{\dd t}\geq\eta_j>0,\q\q\forall t\in[0,T_j].
\]
This contradicts the boundedness of $A(t)$ in Lemma \ref{time-estimate}. Consequently $T_j<+\infty$ for all $j$, which means that $\vec u_j$ eventually leaves $\Omega_{\vff,\eps_3}$. This completes the proof.
\fim

The remaining results of this section are applications of Theorem \ref{T:instability}. In verifying the hypotheses of this theorem, we will use the fact that for any $\vec w_1=(u_1,v_1)$ and $\vec w_2=(u_2,v_2)$ in $\x$ we have
    \begin{equation}\label{E:H_formula}
    \left\langle H\vec w_1,\vec w_2\right\rangle
    =\int_\rr(u_1''''+\beta u_1''+(1-c^2)u_1-f'(\ff)u_1)u_2+(cu_1+v_1)(cu_2+v_2)\dd x.
    \end{equation}
In view of this, we define $H_1=\partial_x^4+\beta\partial_x^2+(1-c^2)+f'(\ff)$. Our first result is the following complement of Theorem \ref{T:stability}.

\begin{theorem}\label{T:instability_d}
Suppose $\beta<2$ and assume there exists a $C^2$ map $c\mapsto\varphi\in\g$ for $c<c_*$. If $d''(c)<0$, then $\mathcal{O}_{\vff}$ is $\x$-unstable for any $\vec\ff\in\g$.
\end{theorem}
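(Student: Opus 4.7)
The proof applies Theorem \ref{T:instability}, so the main task is to construct an unstable direction $\vec\psi$. Since $c\mapsto\vec\ff$ is $C^2$, $\vec\ff_c\in\x$ is defined, and differentiating the Euler--Lagrange identity $L'(\vec\ff)=E'(\vec\ff)+cQ'(\vec\ff)=\vec 0$ in $c$ yields
\[
H\vec\ff_c=-Q'(\vec\ff).
\]
Combined with $d'(c)=Q(\vec\ff)$ from Lemma \ref{L:d_properties}, this gives $d''(c)=\langle Q'(\vec\ff),\vec\ff_c\rangle=-\langle H\vec\ff_c,\vec\ff_c\rangle$, so the hypothesis $d''(c)<0$ makes $\vec\ff_c$ a \emph{positive} direction for $H$. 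The unstable direction therefore cannot be $\vec\ff_c$ itself; it must be produced by projecting onto the $Q'$-orthogonal subspace, in the spirit of Grillakis--Shatah--Strauss.

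My ansatz is $\vec\psi=(\psi_1,-c\psi_1)$ with $\psi_1\in H^2(\mathbb R)$, for which $\partial_x\vec\psi=(\chi,-c\chi)$, $\chi=\partial_x\psi_1$. For vectors of this form the cross-term $cu+v$ in \eqref{E:H_formula} vanishes identically, which reduces the two hypotheses of Theorem \ref{T:instability} to the scalar conditions
\[
\int\ff\,\chi\,dx=0 \quad\text{and}\quad \langle A\chi,\chi\rangle<0, \qquad A=\partial_x^4+\beta\partial_x^2+(1-c^2)-f'(\ff),
\]
together with the regularity $\chi\in L^1(\mathbb R)$ with $\int\chi\,dx=0$ (so that $\psi_1=\partial_x^{-1}\chi\in H^2$) and exponential decay, which is inherited from Theorem \ref{T:regularity_decay}. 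Differentiating \eqref{E:B6_solitary} in $x$ and $c$ and using the homogeneity of $f$ gives the key algebraic identities $A\ff_x=0$ (translation kernel), $A\ff=-(p-1)f(\ff)$ (so $\langle A\ff,\ff\rangle=-(p-1)K(\ff)<0$ exhibits a negative direction for $A$), and $A\ff_c=2c\ff$.

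I build $\chi$ by starting from a negative eigenfunction of $A$ and adjusting by elements of $\mathrm{span}\{\ff,\ff_c,\partial_x^2\ff\}$; the three coefficients are uniquely determined by the linear system enforcing $\int\chi\,dx=0$, $\int\ff\chi\,dx=0$, and $\langle Q'(\vec\ff),\partial_x\vec\psi\rangle=0$. Nondegeneracy of the $3\times 3$ constraint matrix follows from the fact that $\int\vec\ff=(\int\ff,-c\int\ff)$ and $\int\vec\ff_c=(\int\ff_c,-\int\ff-c\int\ff_c)$ span $\mathbb R^2$ with cross-determinant $-(\int\ff\,dx)^2\neq 0$, while $\int\partial_x^2\vec\ff=\vec 0$ and $\langle Q'(\vec\ff),\partial_x^2\vec\ff\rangle=2c\int\ff_x^2\,dx\neq 0$ supply the third independent degree of freedom. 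A Grillakis--Shatah--Strauss--type Lagrange multiplier computation, pivoting on the identity $A\ff_c=2c\ff$ together with the formula $d''(c)=-\int\ff^2\,dx-2c\int\ff\ff_c\,dx$, then shows that the constrained infimum of $\langle A\chi,\chi\rangle$ is strictly negative precisely when $d''(c)<0$. Setting $\vec\psi=(\partial_x^{-1}\chi,-c\partial_x^{-1}\chi)$ and invoking Theorem \ref{T:instability} yields the $\x$-instability of $\mathcal{O}_{\vec\ff}$.

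The main obstacle is the quantitative step: one must ensure that the negativity contributed by the eigenfunction of $A$ is not destroyed by the three correction terms. This is essentially the classical GSS index count, complicated here by the fact that the integral constraint $\int\chi\,dx=0$ is not a standard $L^2$-orthogonality (since $1\notin L^2(\mathbb R)$), so the usual Fredholm apparatus must be adapted, for instance by approximation with smooth, compactly supported perturbations in the negative cone of $A$, or by working in a suitable weighted Sobolev space.
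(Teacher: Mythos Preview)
Your proposal has a genuine gap stemming from the restrictive ansatz $\vec\psi=(\psi_1,-c\psi_1)$. With $\partial_x\vec\psi=(\chi,-c\chi)$ the cross-term $cu+v$ in \eqref{E:H_formula} vanishes, so you are reduced to the scalar problem of finding $\chi$ with $\int\ff\chi=0$ and $\langle A\chi,\chi\rangle<0$. But the standard Lagrange-multiplier count shows that, when $A$ has a single negative eigenvalue, the constrained quadratic form is negative iff $\langle A^{-1}\ff,\ff\rangle>0$. Since $A\ff_c=2c\ff$ one gets $A^{-1}\ff=\ff_c/(2c)$ (modulo kernel) and hence
\[
\langle A^{-1}\ff,\ff\rangle=\frac{1}{2c}\int\ff\ff_c\,dx=\frac{d'(c)-cd''(c)}{4c^3},
\]
whose sign is \emph{not} determined by $d''(c)<0$ alone (for $c>0$ the two summands have opposite signs). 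Thus your claim that ``the constrained infimum of $\langle A\chi,\chi\rangle$ is strictly negative precisely when $d''(c)<0$'' is not justified for the scalar problem you have set up. Note also that two of your three constraints coincide: for $\partial_x\vec\psi=(\chi,-c\chi)$ one has $\langle Q'(\vec\ff),\partial_x\vec\psi\rangle=-2c\int\ff\chi$, so this condition and $\int\ff\chi=0$ are the same; the $3\times3$ system is actually $2\times3$.

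The fix, which is what the paper does, is to drop the $(\chi,-c\chi)$ ansatz and take
\[
\partial_x\vec\psi=\vec\ff-\frac{2d'(c)}{d''(c)}\vec\ff_c,\qquad \vec\ff_c=(\ff_c,\,-c\ff_c-\ff).
\]
The second component of $\vec\ff_c$ carries an extra $-\ff$ that makes the $(cu+v)$-term in \eqref{E:H_formula} contribute $\int\ff^2$, and this is precisely what converts the awkward quantity $d'(c)-cd''(c)$ into $-d''(c)$: one computes directly $\langle H\vec\ff,\vec\ff\rangle=-2(p+1)d(c)$, $\langle H\vec\ff,\vec\ff_c\rangle=-2d'(c)$, $\langle H\vec\ff_c,\vec\ff_c\rangle=-d''(c)$, and the coefficient $-2d'(c)/d''(c)$ simultaneously enforces $\langle Q'(\vec\ff),\partial_x\vec\psi\rangle=0$ and yields $\langle H\partial_x\vec\psi,\partial_x\vec\psi\rangle=-2(p+1)d(c)+4(d'(c))^2/d''(c)<0$. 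No spectral analysis of $A$, no eigenfunction, and no mean-zero adjustment is needed; the entire argument is a three-line algebraic computation once the right two-dimensional search space $\mathrm{span}\{\vec\ff,\vec\ff_c\}$ is identified.
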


\proof Define
\[
\vec\psi(x)=\int_{-\infty}^x\vec\ff(y)-\frac{2d'(c)}{d''(c)}\vec\ff_c(y)\dd y,
\]
where $\vec\ff_c=\frac{d}{dc}\vec\ff=(\ff_c,-c\ff_c-\ff)$.
We need to show that $\vec\psi$ satisfies the hypotheses of Theorem \ref{T:instability}. Now
    \[
    \left\langle Q'(\vec\ff),\partial_x\vec\psi\right\rangle=
    \left\langle Q'(\vec\ff),\vec\ff\right\rangle-\frac{2d'(c)}{d''(c)}
    \frac{d}{dc}Q(\vec\ff)
    =2d'(c)=2d'(c)=0
    \]
so the first hypothesis is satisfied. To show that the second hypothesis is satisfied, first note that
\begin{align*}
  \left\langle H\partial_x\vec\psi,\partial_x\vec\psi\right\rangle
  &=\left\langle H\vec\ff,\vec\ff\right\rangle-\frac{4d'(c)}{d''(c)}\left\langle H\vec\ff,(\vec\ff)_c\right\rangle
  +\left(\frac{2d'(c)}{d''(c)}\right)^2\left\langle H\vec\ff_c,\vec\ff_c\right\rangle.
\end{align*}
Using the homogeneity of $f$ and the solitary wave equation, we have
    \[
    H_1(\ff)=f(\ff)-f'(\ff)\ff=(1-p)f(\ff),
    \]
so by relation \eqref{E:H_formula} it follows that
    \[
    \left\langle H\vec\ff,\vec\ff\right\rangle
    =(1-p)\int_\rr f(\ff)\ff\dd x=(1-p)(p+1)\int_\rr F(\ff)\dd x
    =-2(p+1)d(c)
    \]
and
    \[
    \left\langle H\vec\ff,\vec\ff_c\right\rangle
    =(1-p)\int_\rr f(\ff)\ff_c\dd x=(1-p)\left(\int_\rr F(\ff)\dd x\right)_c
    =-2d'(c).
    \]
By differentiating the solitary wave equation with respect to $c$, it follows that
    \[
    H_1\ff_c=2c\ff,
    \]
so
    \begin{align*}
    \left\langle H\vec\ff_c,\vec\ff_c\right\rangle
    &=\int_\rr 2c\ff\ff_c+\ff^2\dd x\\
    &=c\left(\int_\rr\ff^2\dd x\right)_c-\frac{d'(c)}{c}\\
    &=-c\left(\frac{d'(c)}{c}\right)_c-\frac{d'(c)}{c}\\
    &=-d''(c).
    \end{align*}
It now follows that
    \begin{align*}
    \left\langle H\partial_x\vec\psi,\partial_x\vec\psi\right\rangle
    &=-2(p+1)d(c)-\frac{4d'(c)}{d''(c)}(-2d'(c))
    +\left(\frac{2d'(c)}{d''(c)}\right)^2(-d''(c))\\
    &=-2(p+1)d(c)+\frac{4(d'(c))^2}{d''(c)}<0
    \end{align*}
since $d''(c)<0$. This completes the proof.
\fim

We next apply Theorem \ref{T:instability} to obtain conditions on $p$, $\beta$ and $c$ that imply orbital instability. For our choices of unstable direction we will use the following.
\begin{enumerate}[(i)]
  \item $\vec\psi_x=(\ff,+c\ff)$ -- for small $c$, and any $p>1$.
  \item $\vec\psi_x=\vec\ff+2x\vec\ff_x$ -- for large $p$.
\end{enumerate}

\begin{lemma}\label{L:unstable_direction} Let $\partial_x\vec\psi=\vec\ff+2x\vec\ff_x
=(\ff+2x\ff_x,-c(\ff+2x\ff'))$. Then $\left\langle Q'(\vec\ff),\partial_x\vec\psi\right\rangle=0$ and
    \[
    \left\langle H\partial_x\vec\psi,\partial_x\vec\psi\right\rangle=\frac{(1-p)(p-3)}{p+1}K(\ff)+\int_\rr 24(\ff'')^2-4\beta(\ff')^2\dd x.
    \]
\end{lemma}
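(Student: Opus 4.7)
The plan is to compute both identities directly, exploiting the structure that $\partial_x\vec\psi = (u, -cu)$ with $u=\ff+2x\ff'$, and then collapse the result using the Pohozaev identity \eqref{E:pohozaev} together with $I(\ff)=K(\ff)$.

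For the first identity, since $Q'(\vec\ff)=(-c\ff,\ff)$, I get
$\langle Q'(\vec\ff),\partial_x\vec\psi\rangle=-2c\int_\rr \ff(\ff+2x\ff')\,dx=-2c\int_\rr (x\ff^2)'\,dx=0$,
using that $\ff$ decays. This is immediate.

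For the second identity, the key observation is that because $v=-cu$ in $\partial_x\vec\psi$, the $(cu+v)^2$ piece in formula \eqref{E:H_formula} vanishes, so
\[
\langle H\partial_x\vec\psi,\partial_x\vec\psi\rangle=\int_\rr u\,H_1 u\,dx,\qquad H_1=\partial_x^4+\beta\partial_x^2+(1-c^2)-f'(\ff).
\]
I would then apply $H_1$ to $u=\ff+2x\ff'$ by using the commutator identity $\partial_x^n(xg)=x\partial_x^n g+n\partial_x^{n-1}g$ to write
$H_1(x\ff')=x(\ff'''''+\beta\ff'''+(1-c^2)\ff'-f'(\ff)\ff')+4\ff''''+2\beta\ff''$,
and notice that the bracket is exactly the $x$-derivative of the solitary wave equation \eqref{E:B6_solitary}, so it vanishes. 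Combined with the paper's identity $H_1(\ff)=(1-p)f(\ff)$, this yields
\[
H_1 u=(1-p)f(\ff)+8\ff''''+4\beta\ff''.
\]

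Next I would expand $\int_\rr u\,H_1u\,dx$ into six integrals and evaluate each using integration by parts. The routine pieces give $\int \ff f(\ff)\,dx=K(\ff)$ and $\int 2x\ff' f(\ff)\,dx=2\int x(F(\ff))'\,dx=-\tfrac{2}{p+1}K(\ff)$; the Pohozaev-type pieces give $\int x\ff'\ff''''\,dx=\tfrac{3}{2}\int(\ff'')^2\,dx$ (from two integrations by parts plus $\int x\ff''\ff'''\,dx=-\tfrac12\int(\ff'')^2\,dx$) and $\int x\ff'\ff''\,dx=-\tfrac12\int(\ff')^2\,dx$. Collecting these gives the intermediate expression
\[
\langle H\partial_x\vec\psi,\partial_x\vec\psi\rangle=-\frac{(p-1)^2}{p+1}K(\ff)+32\int_\rr(\ff'')^2\,dx-8\beta\int_\rr(\ff')^2\,dx.
\]

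The final step, and the only non-mechanical one, is to massage this into the claimed form. Adding \eqref{E:pohozaev} and \eqref{E:I=K} eliminates the $(1-c^2)\ff^2$ terms and yields $4\int(\ff'')^2-2\beta\int(\ff')^2=\tfrac{p-1}{p+1}K(\ff)$, hence $8\int(\ff'')^2-4\beta\int(\ff')^2=\tfrac{2(p-1)}{p+1}K(\ff)$. Splitting $32\int(\ff'')^2-8\beta\int(\ff')^2=\bigl(24\int(\ff'')^2-4\beta\int(\ff')^2\bigr)+\bigl(8\int(\ff'')^2-4\beta\int(\ff')^2\bigr)$ and substituting this identity produces $\tfrac{(p-1)(3-p)}{p+1}K(\ff)=\tfrac{(1-p)(p-3)}{p+1}K(\ff)$ in the $K(\ff)$ coefficient, matching the stated formula. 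The main obstacle here is just keeping the signs and coefficients straight in the various integrations by parts; the strategic step that makes the whole computation work is recognizing that one must combine Pohozaev with $I(\ff)=K(\ff)$ to absorb the leftover $8\int(\ff'')^2-4\beta\int(\ff')^2$ into the nonlinear term.
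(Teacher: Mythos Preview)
Your proof is correct. The approach is essentially the same as the paper's---both reduce to computing $\int u\,H_1u\,dx$ with $u=\ff+2x\ff'$ via the identities $H_1\ff=(1-p)f(\ff)$ and $H_1(2x\ff')=8\ff''''+4\beta\ff''$---but you organize the expansion slightly differently. The paper splits the bilinear form symmetrically as $A_1=\langle H_1\ff,\ff\rangle$, $A_2=2\langle H_1\ff,2x\ff'\rangle$, $A_3=\langle H_1(2x\ff'),2x\ff'\rangle$, exploiting the self-adjointness of $H_1$; this yields the stated formula directly with coefficient $(1-p)\bigl(1-\tfrac{4}{p+1}\bigr)=\tfrac{(1-p)(p-3)}{p+1}$ and no further manipulation. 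You instead compute $H_1u$ first and pair it with $u$, which routes the cross term $\langle H_1(2x\ff'),\ff\rangle=8\int(\ff'')^2-4\beta\int(\ff')^2$ into the quadratic piece rather than the nonlinear piece, producing your intermediate form $-\tfrac{(p-1)^2}{p+1}K(\ff)+32\int(\ff'')^2-8\beta\int(\ff')^2$; you then need the combination of \eqref{E:pohozaev} and \eqref{E:I=K} to absorb the extra $8\int(\ff'')^2-4\beta\int(\ff')^2$ back into $K(\ff)$. Both routes are valid, but the paper's symmetric split is a bit cleaner since it avoids invoking the Pohozaev identity altogether in this lemma (that identity is only needed later, in Theorem~\ref{T:instability_criteria}).
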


\proof
First, we have
    \[
      \left\langle Q'(\vec\ff),\partial_x\vec\psi\right\rangle=\int_\rr -2c\ff(\ff+2c\ff')\dd x=0
    \]
as claimed. Next we have
    \[
    \left\langle \partial_xH\vec\psi,\partial_x\vec\psi\right\rangle=\int_\rr
    [(\ff+2x\ff')''''+\beta (\ff+2x\ff')''+(1-c^2)(\ff+2x\ff')-f'(\ff)(\ff+2x\ff')](\ff+2x\ff')\dd x,
    \]
which may be split into three terms:
\begin{align*}
  A_1&=\int_\rr
    [\ff''''+\beta \ff''+(1-c^2)\ff-f'(\ff)\ff]\ff\dd x,\\
    A_2&=2\int_\rr
    [\ff''''+\beta \ff''+(1-c^2)\ff-f'(\ff)\ff](2x\ff')\dd x,\\
    A_3&=\int_\rr
    [(2x\ff')''''+\beta (2x\ff')''+(1-c^2)(2x\ff')-f'(\ff)(2x\ff')](2x\ff')\dd x.
\end{align*}
Since $\ff''''+\beta \ff''+(1-c^2)\ff-f'(\ff)\ff=(1-p)f(\ff)$ we have
    \[
    A_1=(1-p)(p+1)\int_\rr F(\ff)\dd x=(1-p)K(\ff)
    \]
and
    \[
    A_2=-4(1-p)\int_\rr F(\ff)\dd x=-\frac{4(1-p)}{p+1}K(\ff).
    \]
For $A_3$ first observe that by differentiating \eqref{E:B6_solitary} we obtain $\ff'''''+\beta \ff'''+(1-c^2)\ff'-f'(\ff)\ff'$, and thus
    \[
    (2x\ff')''''+\beta (2x\ff')''+(1-c^2)(2x\ff')-f'(\ff)(2x\ff')
    =8\ff''''+4\beta\ff''.
    \]
Thus
    \[
    A_3=\int_\rr (8\ff''''+4\beta\ff'')2x\ff'\dd x=\int_\rr 24(\ff'')^2-4\beta(\ff')^2\dd x
    \]
so summing $A_1$, $A_2$ and $A_3$ yields the result of the lemma.
\fim

\begin{theorem}\label{T:instability_criteria} Suppose $c^2<1$, $\beta<\beta_*=2\sqrt{1-c^2}$ and $\ff\in\g$. Recall that $c_\ast=\sqrt{1-\beta_+^2/4}$. 
Then $\mathcal{O}(\ff)$ is $\x$-unstable in the following cases.
\begin{enumerate}[(i)]
\item $p>1$ and $c^2<\left(\frac{p-1}{p+3}\right)c_\ast^2$.
  \item $p\geq9$, $c^2<1$ and $\beta<\left(\frac{(p-1)(p-9)}{(p-1)^2+16}\right)\beta_*$.
\end{enumerate}
\end{theorem}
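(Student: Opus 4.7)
The plan is to apply Theorem \ref{T:instability} in each case, with the two choices of unstable direction $\partial_x\vec\psi$ indicated just before Lemma \ref{L:unstable_direction}. Throughout, denote $A=\int(\ff'')^2\dd x$, $B=\int(\ff')^2\dd x$ and $C=\int\ff^2\dd x$. The key algebraic inputs will be the identity $I(\ff)=K(\ff)$ (equation \eqref{E:I=K}), the Pohozaev identity \eqref{E:pohozaev}, and the identity $\left\langle H\vec\ff,\vec\ff\right\rangle=-2(p+1)d(c)$ type computations already used in the proof of Theorem \ref{T:instability_d}. Subtracting and combining \eqref{E:pohozaev} and \eqref{E:I=K} gives the two basic relations
\[
(p-1)K(\ff)=2(p+1)(2A-\beta B),\qquad (p-1)(1-c^2)C=(3p+5)A-(p+3)\beta B.
\]

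For case (i), I would take $\partial_x\vec\psi=(\ff,\;+c\ff)$. The orthogonality $\left\langle Q'(\vec\ff),\partial_x\vec\psi\right\rangle=\int(-c\ff)\ff+\ff(c\ff)\dd x=0$ is immediate. Using formula \eqref{E:H_formula} with $u_1=u_2=\ff$ and $v_1=v_2=c\ff$, and Euler's identity $f'(\ff)\ff=pf(\ff)$ from homogeneity, the computation parallels the one in Theorem \ref{T:instability_d} and yields
\[
\left\langle H\partial_x\vec\psi,\partial_x\vec\psi\right\rangle=(1-p)K(\ff)+4c^2 C.
\]
The task is to show this is negative. Eliminating $K$ by the first of the two identities above and $C$ by the second, the sign condition reduces, after collecting terms, to a linear inequality in the unknowns $A$ and $\beta B$. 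When $\beta\le 0$ the $\beta B$-contribution has the favourable sign and the inequality is immediate from the hypothesis $c^2<\frac{p-1}{p+3}c_*^2=\frac{p-1}{p+3}$. For $\beta>0$ I would estimate $\beta B\le\beta\sqrt{AC}$ via Cauchy--Schwarz, feed this back into the Pohozaev relation to get an upper bound on $\sqrt{A/C}$ in terms of $\beta$ and $1-c^2$, and then verify algebraically that the bound $c^2<\frac{p-1}{p+3}(1-\beta^2/4)$ is exactly what is required to close the estimate.

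For case (ii), I would take $\partial_x\vec\psi=\vec\ff+2x\vec\ff_x$, for which Lemma \ref{L:unstable_direction} already supplies both the orthogonality $\left\langle Q'(\vec\ff),\partial_x\vec\psi\right\rangle=0$ and the formula
\[
\left\langle H\partial_x\vec\psi,\partial_x\vec\psi\right\rangle=\frac{(1-p)(p-3)}{p+1}K(\ff)+24A-4\beta B.
\]
Eliminating $K(\ff)$ by $(p-1)K=2(p+1)(2A-\beta B)$ gives the clean simplification
\[
\left\langle H\partial_x\vec\psi,\partial_x\vec\psi\right\rangle=2\bigl[2(9-p)A+(p-5)\beta B\bigr],
\]
so the sign requirement becomes $(p-5)\beta B<2(p-9)A$. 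For $p\ge 9$ the coefficients $p-5$ and $p-9$ are nonnegative, so when $\beta\le 0$ the left side is $\le 0$ and the right side is $\ge 0$ (with strict inequality as soon as $p>9$ or $\beta<0$). For $\beta>0$ I would again use $B\le\sqrt{AC}$, substitute into Pohozaev to obtain the quadratic inequality $(3p+5)(A/C)-(p+3)\beta\sqrt{A/C}-(p-1)(1-c^2)\le 0$, and then eliminate $\sqrt{A/C}$ between this quadratic bound and the desired $(p-5)\beta\sqrt{AC}<2(p-9)A$. This reduces, after squaring and simplifying (the key polynomial identity being $[(p-1)^2+16]^2-(p-1)(p-5)(p^2+2p+29)=16(p+3)^2$), to the explicit bound $\beta<\frac{(p-1)(p-9)}{(p-1)^2+16}\beta_*$.

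The main obstacle will be the $\beta>0$ subcase in each part: closing the Cauchy--Schwarz/Pohozaev loop and matching the resulting quadratic estimate with the exact explicit thresholds stated in the theorem. The polynomial bookkeeping is lengthy but, as the identity above suggests, works out with room to spare, so the argument is entirely algebraic once the two relations above are in hand.
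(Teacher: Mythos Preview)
Your choice of test directions, the orthogonality checks, and your formula $\langle H\partial_x\vec\psi,\partial_x\vec\psi\rangle=(1-p)K(\ff)+4c^2C$ for (i) and the simplification $2[2(9-p)A+(p-5)\beta B]$ for (ii) are all correct and match the paper. The problem is the $\beta>0$ step in both parts.

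In part (i), after eliminating $K$ and $C$ via your two Pohozaev-type relations, the negativity condition becomes (with $t=\sqrt{A/C}$) a \emph{lower} bound $t>c_B\beta/c_A$ for certain positive constants $c_A,c_B$. Likewise in part (ii) your condition $(p-5)\beta B<2(p-9)A$, combined with $B\le\sqrt{AC}$, again demands a \emph{lower} bound on $t$. But feeding $B\le\sqrt{AC}$ into the Pohozaev identity produces the quadratic $(3p+5)t^2-(p+3)\beta t-(p-1)(1-c^2)\le 0$, whose useful content is an \emph{upper} bound $t\le t_+$ (the negative root gives nothing). So the chain ``Cauchy--Schwarz $\Rightarrow$ Pohozaev quadratic $\Rightarrow$ bound on $t$'' runs in the wrong direction, and the loop does not close; in particular the polynomial identity you quote does not arise from this argument.

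The paper does not use Pohozaev at all here. Instead it uses two one-line coercivity bounds on $I(\ff)$: for (i), completing the square gives $A-\beta B+\tfrac{\beta^2}{4}C=\int(\ff''-\tfrac{\beta}{2}\ff)^2\ge 0$, hence $I(\ff)\ge(1-c^2-\tfrac{\beta^2}{4})C=(c_*^2-c^2)C$; for (ii), AM--GM plus $B\le\sqrt{AC}$ give $A+(1-c^2)C\ge 2\sqrt{1-c^2}\,B=\beta_* B$, hence $I(\ff)\ge(\beta_*-\beta)B$. Since $I(\ff)=K(\ff)$, these directly bound $4c^2C$ and $8\beta B$ by multiples of $K(\ff)$, and the stated thresholds drop out in one line each. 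Both inequalities are consequences of the Cauchy--Schwarz bound you already invoke, so the missing idea is to apply it to bound $C$ (resp.\ $B$) against $I$ directly, rather than routing through the Pohozaev relation.
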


\begin{figure}
  \begin{center}
    \scalebox{0.45}{
    \includegraphics{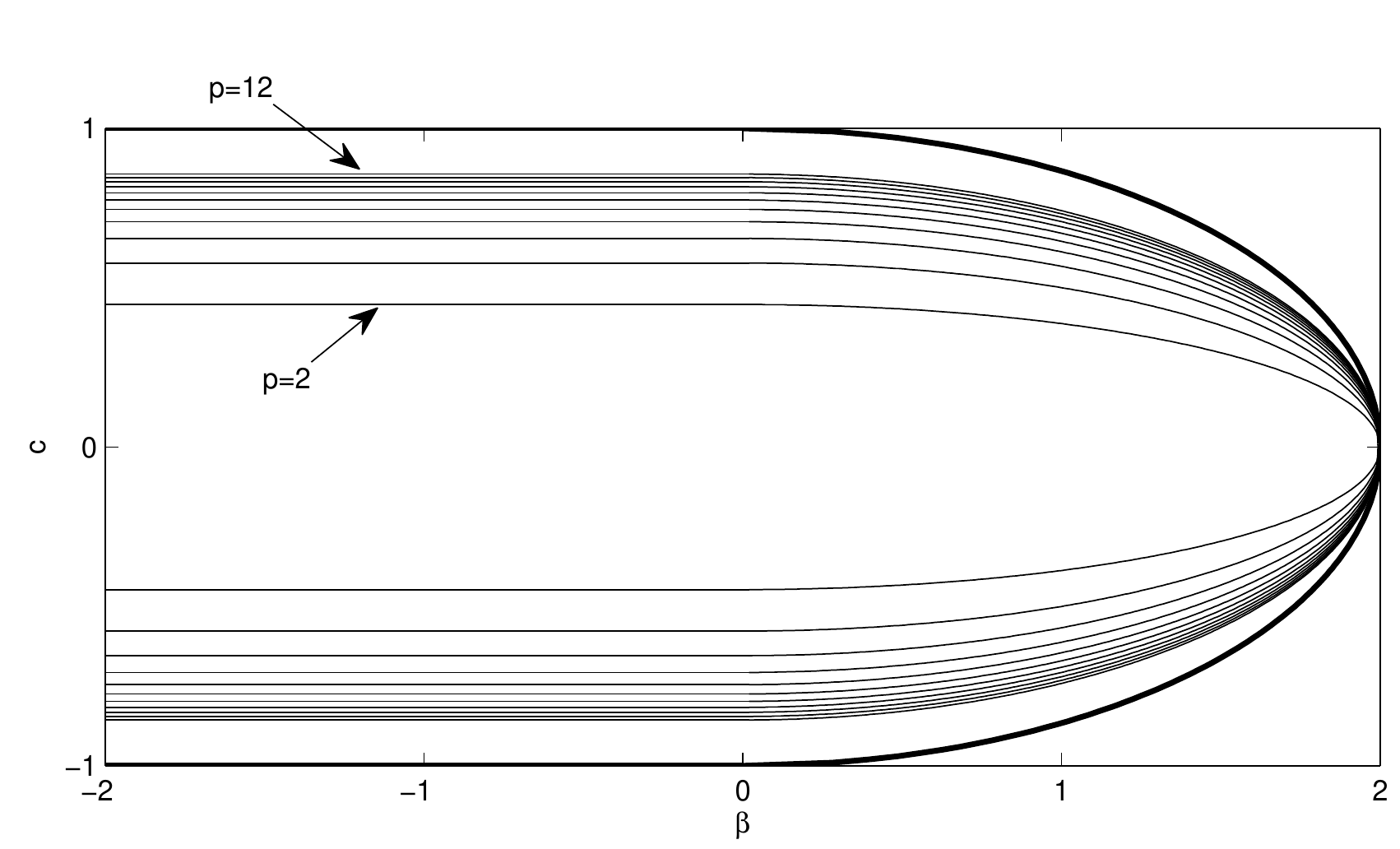}\quad
    \includegraphics{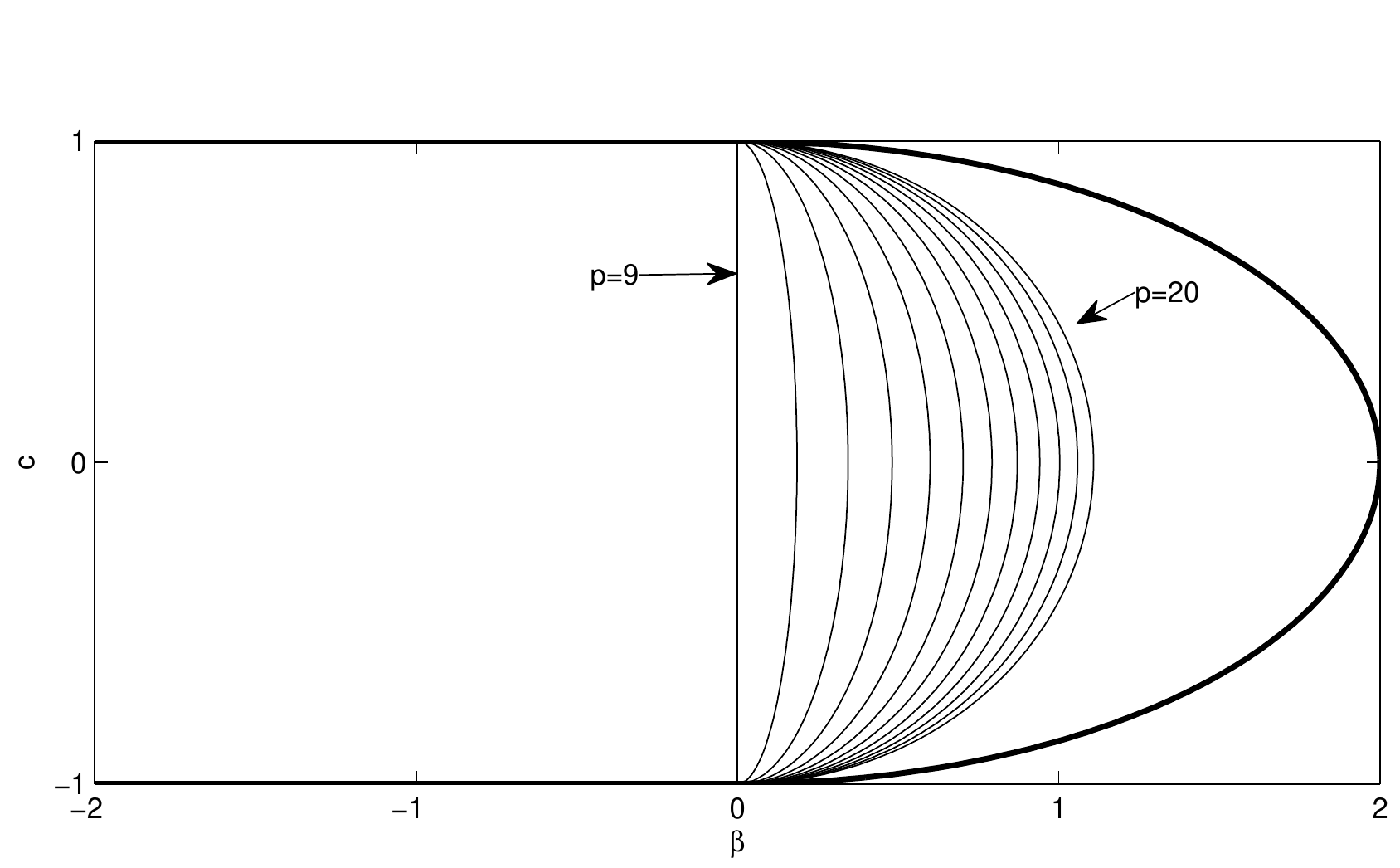}
    }
  \end{center}
  \caption{The regions of instability guaranteed by Theorem \ref{T:instability_criteria}. The regions described in part (i) lie between the upper and lower curves on the first plot. The regions described in part (ii) lie to the left of the curves in the second plot. Both regions grow to fill the domain of $d$ as $p$ increases.}
\end{figure}

\proof To prove the first statement, consider the choice $\partial_x\vec\psi=(\ff,c\ff)$. It is easy to see that $\left\langle Q'(\vec\ff),\partial_x\vec\psi\right\rangle=0$. Next we compute
    \begin{align*}
    \left\langle \partial_xH\vec\psi,\partial_x\vec\psi\right\rangle&=
    \int_\rr(\ff''''+\beta\ff''+\ff-f'(\ff)\ff)\ff+(c\ff)^2+2c\ff c\ff\dd x\\
    &=\int_\rr(\ff''''+\beta\ff''+(1-c^2)\ff-f'(\ff)\ff)\ff+4c^2\ff^2\dd x\\
    &=(1-p)K(\ff)+4c^2\int_\rr\ff^2\dd x.
    \end{align*}
First suppose $\beta\leq 0$, in which case $c_*=1$. Then $I(\ff)\geq(1-c^2)\int_\rr \ff^2\dd x$, so
    \[
    \left\langle \partial_xH\vec\psi,\partial_x\vec\psi\right\rangle\leq \left(1-p+\frac{4c^2}{1-c^2}\right)I(\ff).
    \]
Now suppose $\beta>0$. Then
    \begin{align*}
    I(\ff)&=\int_\rr(\ff'')^2-\beta(\ff')^2+\frac{\beta^2}{4}\ff^2\dd x
    +\left(1-c^2-\frac{\beta^2}{4}\right)\int_\rr\ff^2\dd x\\
    &\geq\left(1-c^2-\frac{\beta^2}{4}\right)\int_\rr\ff^2\dd x
    \end{align*}
and thus
    \[
    \left\langle \partial_xH\vec\psi,\partial_x\vec\psi\right\rangle\leq \left(1-p+\frac{4c^2}{1-c^2-\frac{\beta^2}{4}}\right)I(\ff).
    \]
Hence for any $\beta<2$ we have 
    \[
    \left\langle \partial_xH\vec\psi,\partial_x\vec\psi\right\rangle\leq \left(1-p+\frac{4c^2}{c_\ast^2-c^2}\right)I(\ff),
    \]
and this quantity is negative when condition (i) is satisfied.

To prove (ii), we use the choice of unstable direction given in Lemma \ref{L:unstable_direction}.
Multiplying the solitary wave equation by $x\ff'$ and integrating yields the Pohozaev identity
    \begin{equation}
      \int_\rr 3(\ff'')^2-\beta(\ff')^2-(1-c^2)\ff^2+2F(\ff)\dd x=0.
    \end{equation}
The identity $I(\ff)=K(\ff)$ may be written
    \begin{equation}
      \int_\rr (\ff'')^2-\beta(\ff')^2+(1-c^2)\ff^2-(p+1)F(\ff)\dd x=0.
    \end{equation}
Together these give
    \[
    \int_\rr 4(\ff'')^2-2\beta(\ff')^2-(p-1)F(\ff)\dd x=0.
    \]
Together with the result of Lemma \ref{L:unstable_direction}, this gives
    \[
    \left\langle \partial_xH\vec\psi,\partial_x\vec\psi\right\rangle=\frac{(1-p)(p-9)}{p+1}K(\ff)+8\beta\int_\rr(\ff')^2\dd x.
    \]
Since
    \[
    I(\ff)\geq (\beta_*-\beta)\int_\rr(\ff')^2\dd x
    \]
it then follows that
    \[
    \left\langle \partial_xH\vec\psi,\partial_x\vec\psi\right\rangle\leq \left(\frac{8\beta}{\beta_*-\beta}+\frac{(1-p)(p-9)}{p+1}\right)K(\ff).
    \]
The term in parentheses is negative when $\beta$ satisfies condition (ii) above.
\fim

\section{Further Properties of $d$.}\label{S:d_properties}

In this section we establish further properties of the function $d$.
We first obtain bounds on the function $d$ as $c$ approaches $\pm c_*=\sqrt{1-\beta_+^2/4}$. To obtain these bounds, we use trial functions to obtain bounds on the Rayleigh quotient that defines $m(\beta,c)$. To motivation the choice of trial function, we observe that solutions of the solitary wave equation \eqref{E:B6_solitary} have tails that decay like solutions of the linear equation
\begin{equation}\label{E:linear_solitary}
  \ff''''+\beta\ff''+(1-c^2)\ff=0.
\end{equation}
The fundamental solution of this equation is the function $h$ defined by \eqref{E:kernel_definition}. Recalling that $h$ is given explicitly by the expressions in \eqref{E:kernel_formulas}, we see that $h\in H^2(\rr)$, and is thus a valid trial function provided $K(h)>0$. The fact that $K(h)>0$ will be verified below. Since scaling has no effect on the Rayleigh quotient that defines $m$, we use the following scaled versions of $h$ for simplicity. If $-\beta_\ast<\beta<\beta_\ast$, define
    \begin{equation}\label{E:trial_u}
    u(x)=e^{-\sigma |x|}(\omega\cos(\omega x)+\sigma\sin(\omega |x|))
    \end{equation}
and if $\beta<-\beta_*$ define
    \begin{equation}\label{E:trial_v}
    v(x)=\lambda_2e^{-\lambda_1|x|}-\lambda_1e^{-\lambda_2|x|}
    \end{equation}
where $\lambda_1$, $\lambda_2$, $\sigma$ and $\omega$ are defined by \eqref{E:lambda_sigma_omega}.

\begin{theorem}\label{T:d_bound} Suppose $f(u)=|u|^{p-1}u$. Fix $\beta<2$. Then
    \[
    d(\beta,c)=O\left((c_*-c)^\frac{p+3}{2(p-1)}\right)
    \]
  as $c$ approaches $c_*$.
\end{theorem}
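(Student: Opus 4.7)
The plan is to bound $d(c)$ from above by evaluating the Rayleigh quotient $R(u)=I(u)/K(u)^{2/(p+1)}$ at a suitable trial function. By the identity \eqref{E:dm_relation}, $d(c)=\frac{p-1}{2(p+1)}m(\beta,c)^{(p+1)/(p-1)}$, so it suffices to prove $m(\beta,c)\leq C(c_*-c)^{(p+3)/(2(p+1))}$; raising to the power $(p+1)/(p-1)$ then yields the stated estimate. I would take as trial function the Green's kernel $\kb$ of \eqref{E:kernel_definition} (equivalently one of its scalar multiples $u$ or $v$ from \eqref{E:trial_u}--\eqref{E:trial_v}): $\kb\in H^2(\rr)$ and $K(\kb)=\int_\rr|\kb|^{p+1}\,\dd x>0$ for $f(u)=|u|^{p-1}u$, so it is admissible, and the explicit formulas \eqref{E:kernel_formulas} make the asymptotics tractable. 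A computational simplification, coming from Parseval together with $(\xi^4-\beta\xi^2+(1-c^2))\what{\kb}(\xi)=1$, is that
\[
I(\kb)=\frac{1}{2\pi}\int_\rr(\xi^4-\beta\xi^2+(1-c^2))|\what{\kb}(\xi)|^2\,\dd\xi
=\frac{1}{2\pi}\int_\rr\what{\kb}(\xi)\,\dd\xi=\kb(0),
\]
which can be read off directly from \eqref{E:kernel_formulas}.

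The heart of the proof is the asymptotic analysis of $\kb(0)$ and $\|\kb\|_{L^{p+1}}^{p+1}$ as $c\to c_*$, in which the natural small parameter ($\sigma$ when $\beta\in(-\beta_*,\beta_*)$, or $\lambda_1$ when $\beta<-\beta_*$) scales like $(c_*-c)^{1/2}$. For $\beta>0$, $c_*=\sqrt{1-\beta^2/4}<1$, and as $c\to c_*^-$, $\beta_*\to\beta^+$, so $\sigma\to 0$ while $\omega\to\sqrt{\beta/2}$ stays bounded; a first-order Taylor expansion of $\beta_*-\beta$ about $c_*$ gives $\sigma\sim(c_*-c)^{1/2}$. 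From \eqref{E:kernel_formulas}, $I(\kb)=\pi/(\sigma\beta_*)\sim\sigma^{-1}$, and the change of variables $y=\sigma x$ yields
\[
\int_\rr|\kb|^{p+1}\,\dd x\sim\frac{1}{\sigma^{p+2}}\int_\rr e^{-(p+1)|y|}\,|\cos(\omega y/\sigma)|^{p+1}\,\dd y,
\]
in which the fast-oscillating factor (since $\omega/\sigma\to\infty$) averages against the exponential envelope to a positive constant, giving $K(\kb)\sim\sigma^{-(p+2)}$. For $\beta\leq 0$, $c_*=1$, and for $c$ close to $c_*$ we are in the regime $\beta<-\beta_*$, so I would instead use the trial function $v$; expanding $\sqrt{\beta^2-\beta_*^2}$ gives $\lambda_1\sim\beta_*/(2\sqrt{|\beta|})\sim(c_*-c)^{1/2}$ with $\lambda_2$ bounded away from zero, and since the $e^{-\lambda_1|x|}$ term dominates in the tail, one obtains the analogous $I(v)\sim\lambda_1^{-1}$ and $K(v)\sim\lambda_1^{-(p+2)}$. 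Either way,
\[
m(\beta,c)\leq R(\kb)\lesssim\sigma^{-1+2(p+2)/(p+1)}=\sigma^{(p+3)/(p+1)}\lesssim(c_*-c)^{(p+3)/(2(p+1))},
\]
with $\lambda_1$ in place of $\sigma$ when $\beta<-\beta_*$, and the theorem follows.

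The main technical obstacle is the borderline case $\beta=0$, in which $\omega=\sigma$ and oscillation and decay live on the same scale as $c\to 1$: the change of variables $y=\sigma x$ then produces an integrand $e^{-(p+1)|y|}(\cos y+\sin|y|)^{p+1}$ whose integral is a positive constant independent of $c$, leading to the \emph{stronger} exponent $R(\kb)\sim\sigma^{(3p+5)/(p+1)}$. Since $(3p+5)/(p+1)\geq(p+3)/(p+1)$ for $p\geq 1$, this is still consistent with the claimed big-O bound. The remaining technical care consists of verifying uniformity in $c$ of the implicit constants, positivity of the limiting period-average of the oscillatory factor in $K(\kb)$, and a standard Riemann-Lebesgue-type justification of the averaging step; each of these is routine given the explicit expressions \eqref{E:kernel_formulas}.
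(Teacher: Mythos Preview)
Your approach is essentially the paper's: use the Green kernel (or a scalar multiple) as trial function, estimate $I$ and $K$ via the explicit formulas \eqref{E:kernel_formulas}, and track the small parameter $\sigma$ or $\lambda_1$ as $c\to c_*$. The identity $I(\kb)=\kb(0)$ is a pleasant shortcut that the paper replaces by the direct evaluations $I(u)=4\sigma\omega^2(\sigma^2+\omega^2)$ and $I(v)=2(\lambda_2-\lambda_1)\lambda_1\lambda_2(\lambda_2^2-\lambda_1^2)$, but the two computations are equivalent after rescaling. Your separate discussion of $\beta=0$ is in fact more careful than the paper, which folds it into the case $0\le\beta<2$ without comment.

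One notational slip to fix: when you switch to the trial function $v$ of \eqref{E:trial_v} you write $I(v)\sim\lambda_1^{-1}$ and $K(v)\sim\lambda_1^{-(p+2)}$, but those are the asymptotics for $\kb$, not for $v$ (which differs from $\kb$ by a factor $\sim\lambda_1^{-1}$); for $v$ itself one has $I(v)\sim\lambda_1$ and $K(v)\sim\lambda_1^{-1}$. Since the Rayleigh quotient is scale-invariant this does not affect the conclusion, but you should keep the bookkeeping consistent. Similarly, in the $\beta=0$ paragraph note that $\sigma\sim(c_*-c)^{1/4}$ rather than $(c_*-c)^{1/2}$, so the comparison of exponents should be $(3p+5)/4\ge(p+3)/2$, which still holds for $p\ge1$.
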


\proof First consider $0\leq\beta<2$. Then $c_*=\sqrt{1-\frac14\beta^2}$, and it follows that
\begin{align*}
  \sigma&=O(\sqrt{c_*-c})\\
  \omega&=\sqrt{\beta/2}+O(c_*-c)
\end{align*}
as $c\to c_*$. For the trial function $u$ given by \eqref{E:trial_u}, a direct calculation reveals that
$I(u)=4\sigma\omega^2(\sigma^2+\omega^2)$, and by calculations similar to those in \cite{levandosky1} we have
    \[
    K(u)=\int_\rr|u|^{p+1}\dd x\geq\frac1{O(\sigma)}
    \]
for small $\sigma>0$. Thus
    \[
    m(\beta,c)\leq\frac{I(u)}{K(u)^{2/(p+1)}}=O\left(\sigma^\frac{p+3}{p+1}\right)
    =O\left((c_*-c)^\frac{p+3}{2(p+1)}\right)
    \]
as $c\to c_*$.

Next, when $\beta<0$ we have $c_*=1$, and
\begin{align*}
  \lambda_1&=O(\sqrt{c_*-c})\\
  \lambda_2&=\sqrt{-\beta}+O(c_*-c)
\end{align*}
as $c\to c_*$. For the trial function $v$ given by \eqref{E:trial_v}, another direct calculation reveals that $I(v)=2(\lambda_2-\lambda_1)\lambda_1\lambda_2(\lambda_2^2-\lambda_1^2)$, and by calculations similar to those in \cite{levandosky1} we have
    \[
    K(v)\geq\frac1{O(\lambda_1)}
    \]
for small $\lambda_1>0$, and thus
    \[
    m(\beta,c)\leq\frac{I(v)}{K(v)^{2/(p+1)}}=O\left(\lambda_1^\frac{p+3}{p+1}\right)
    =O\left((c_*-c)^\frac{p+3}{2(p+1)}\right)
    \]
as $c\to c_*$.

The result then follows by the relation between $d$ and $m$.
\fim

\begin{corollary}\label{C:stability} Suppose $f(u)=|u|^{p-1}u$ where $1<p<5$. Fix $\beta<2$. Then there exist $c$ arbitrarily close to $c_*$ such that $\g$ is $\x$-stable.
\end{corollary}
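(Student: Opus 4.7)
The plan is to invoke Theorem \ref{T:stability}, which reduces the task to exhibiting $c$ arbitrarily close to $c_*$ with $d''(c)>0$. I would argue by contradiction: suppose that there exists $c_0\in(0,c_*)$ such that $d''(c)\leq 0$ at every point of $(c_0,c_*)$ where it exists. Using the regularity of $d$ furnished by Lemma \ref{L:d_properties} (continuity plus a.e.\ differentiability of $d_c$ on the domain), standard arguments then upgrade this to genuine concavity of $d$ on $(c_0,c_*)$.

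Once concavity is in hand, the chord inequality says that for any $c_0<c'<c<c_*$,
\[
d(c')\geq\frac{c-c'}{c-c_0}\,d(c_0)+\frac{c'-c_0}{c-c_0}\,d(c).
\]
By Theorem \ref{T:d_bound}, we have $d(c)=O((c_*-c)^{(p+3)/(2(p-1))})\to 0$ as $c\to c_*$, since the exponent is positive. Letting $c\to c_*$ in the chord inequality therefore yields the linear lower bound
\[
d(c')\;\geq\;\frac{d(c_0)}{c_*-c_0}\,(c_*-c'),\qquad c'\in(c_0,c_*).
\]

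On the other hand, Theorem \ref{T:d_bound} also supplies the upper bound $d(c')=O((c_*-c')^{(p+3)/(2(p-1))})$. For $1<p<5$ one checks that $(p+3)/(2(p-1))>1$ (this is exactly equivalent to $p<5$), so
\[
\frac{d(c')}{c_*-c'}\;=\;O\!\left((c_*-c')^{\frac{p+3}{2(p-1)}-1}\right)\;\longrightarrow\;0
\]
as $c'\to c_*$. This contradicts the linear lower bound obtained from concavity. Hence no such $c_0$ exists, so there are points $c$ arbitrarily close to $c_*$ with $d''(c)>0$, and the corollary follows from Theorem \ref{T:stability}.

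The main technical obstacle is the step where $d''\leq 0$ a.e.\ is promoted to concavity of $d$ on $(c_0,c_*)$; everything else is a direct comparison of the two bounds on $d$. In practice this can be handled by noting that Lemma \ref{L:d_properties} expresses $d_c=-c\int\ff^2\,dx$ wherever it exists, so monotonicity of $d_c$ in $c$ (the form that $d''\leq 0$ takes) plus the continuity of $d$ yields the chord inequality directly, bypassing any subtle absolute-continuity issue.
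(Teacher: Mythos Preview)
Your argument is correct and follows essentially the same route as the paper's own proof. Both rely on the bound $d(c)=O((c_*-c)^{\alpha})$ with $\alpha=\frac{p+3}{2(p-1)}>1$ from Theorem~\ref{T:d_bound}, together with the positivity of $d$, to rule out concavity near $c_*$; the paper simply states that a positive function vanishing at $c_*$ and bounded above by a convex function must satisfy $d''(c)>0$ arbitrarily close to $c_*$, whereas you spell out the contradiction explicitly via the chord inequality and the comparison $\alpha>1$. The technical point you flag about promoting $d''\le0$ a.e.\ to genuine concavity is not addressed in the paper either; both arguments implicitly treat $d$ as sufficiently regular for this step.
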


\proof  Since $\frac{p+3}{2(p-1)}>1$ when $1<p<5$, the function $(c_*-c)^\frac{p+3}{2(p-1)}$ is convex and vanishes at $c=c_*$. Thus by Theorem \ref{T:d_bound}, $d$ vanishes at $c=c_*$ and is bounded above by a convex function. Since $d$ is positive, this implies that there exist $c$ arbitrarily close to $c_*$ such that
$d''(c)>0$, and the result then follows from Theorem \ref{T:stability}.
\fim

\begin{remark} The results of Theorem \ref{T:d_bound} and Corollary \ref{C:stability} also hold for the even nonlinearity $f(u)=|u|^{p+1}$ in the case that $\beta<0$ since the trial function $v$ is positive for small $\lambda_1$ ($c$ near $1$). However, for $0\leq\beta<2$ the non-positivity of $u$ only allows one to obtain the weaker estimate $d(\beta,c)=O(\sqrt{c_*-c})$ which does not imply convexity of $d$ near $c_*$.
\end{remark}

We next present the main scaling identity satisfied by the function $d$.

\begin{theorem}\label{T:d_scaling} Let $c^2<1$ and $\beta<\beta_*=2\sqrt{1-c^2}$. Then for any $0<r\leq(1-c^2)^{-1/2}$ we have
    \[
    d(r\beta,\sqrt{1-r^2(1-c^2)})=r^\frac{3p+5}{2(p-1)}d(\beta,c).
    \]
\end{theorem}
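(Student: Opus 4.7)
The plan is to exploit the spatial rescaling $u(x) = \tilde u(bx)$ and track how both $I$ and $K$ transform, since the relation $d(\beta,c)=\frac{p-1}{2(p+1)}m(\beta,c)^{(p+1)/(p-1)}$ reduces the identity for $d$ to a corresponding scaling identity for the Rayleigh quotient that defines $m(\beta,c)$.

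First I would compute, for $b>0$ and $u(x)=\tilde u(bx)$, the standard identities
\[
\int u_{xx}^2\dd x = b^{3}\!\int\tilde u''^{\,2}\dd y,\q
\int u_x^2\dd x=b\!\int\tilde u'^{\,2}\dd y,\q
\int u^2\dd x=b^{-1}\!\int\tilde u^{\,2}\dd y,
\]
and, using homogeneity of $F$ of degree $p+1$, $K(u)=b^{-1}K(\tilde u)$. Factoring out $b^{3}$ from the kinetic terms in $I$ gives
\[
I(u)=b^{3}\!\left[\int\tilde u''^{\,2}\dd y-\tfrac{\beta}{b^{2}}\!\int\tilde u'^{\,2}\dd y+\tfrac{1-c^{2}}{b^{4}}\!\int\tilde u^{\,2}\dd y\right],
\]
so if we set $\tilde\beta=\beta/b^{2}$ and $\tilde c$ by $1-\tilde c^{2}=(1-c^{2})/b^{4}$, then $I(u)=b^{3}I_{\tilde\beta,\tilde c}(\tilde u)$. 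Consequently
\[
\frac{I(u)}{K(u)^{2/(p+1)}}=b^{\,3+2/(p+1)}\,\frac{I_{\tilde\beta,\tilde c}(\tilde u)}{K(\tilde u)^{2/(p+1)}}=b^{(3p+5)/(p+1)}\frac{I_{\tilde\beta,\tilde c}(\tilde u)}{K(\tilde u)^{2/(p+1)}}.
\]

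Since $u\mapsto\tilde u$ is a bijection of $H^{2}(\rr)\setminus\{0\}$ to itself, taking the infimum on both sides yields $m(\beta,c)=b^{(3p+5)/(p+1)}m(\tilde\beta,\tilde c)$. Now I would set $r=1/b^{2}$, so that $\tilde\beta=r\beta$ and $\tilde c=\sqrt{1-r^{2}(1-c^{2})}$; the constraint $r\leq(1-c^{2})^{-1/2}$ is exactly what is needed to keep $1-r^{2}(1-c^{2})\geq0$, and the condition $\beta<\beta_\ast$ automatically passes to $r\beta<r\beta_\ast=2\sqrt{1-\tilde c^{2}}$, so $(r\beta,\tilde c)$ lies in the admissible domain and the minimum $m(\tilde\beta,\tilde c)$ is attained. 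Substituting yields
\[
m(r\beta,\sqrt{1-r^{2}(1-c^{2})})=r^{(3p+5)/(2(p+1))}\,m(\beta,c).
\]

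Finally I would raise to the power $(p+1)/(p-1)$ and multiply by $(p-1)/(2(p+1))$, using \eqref{E:dm_relation}, to obtain
\[
d(r\beta,\sqrt{1-r^{2}(1-c^{2})})=r^{(3p+5)/(2(p-1))}\,d(\beta,c),
\]
as required. There is no real obstacle here; the only subtlety is verifying that the transformation keeps us inside the admissible parameter region so that ground states continue to exist and Theorem \ref{T:existence} can be invoked, which the bound $r\leq(1-c^{2})^{-1/2}$ together with $\beta<\beta_\ast$ guarantees.
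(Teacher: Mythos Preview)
Your proof is correct and follows essentially the same approach as the paper: both exploit the spatial rescaling of $H^2(\rr)$ to relate the Rayleigh quotients defining $m$ at the two parameter values, and then convert to $d$ via \eqref{E:dm_relation}. The paper uses the substitution $v(x)=r^{3/4}u(r^{-1/2}x)$ (including an amplitude factor that normalizes the $u_{xx}^2$ term) and establishes equality of the infima by arguing both inequalities through minimizers, whereas you omit the amplitude factor and instead use directly that $u\mapsto\tilde u$ is a bijection of $H^2(\rr)\setminus\{0\}$; since the Rayleigh quotient is homogeneous of degree zero in amplitude, the two bookkeepings are equivalent.
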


\proof Recall that
    \[
    m(\beta,c)=\inf\left\{
    \frac{I(u)}{K(u)^{2/(p+1)}}
    \right\}
    \]
where
    \[
    I(u)\equiv I(u;\beta,c)=\int_\rr u_{xx}^2-\beta u_x^2+(1-c^2)u^2\dd x
    \]
and
    \[
    K(u)=(p+1)\int_\rr F(u)\dd x.
    \]
Given any $u\in H^2(\rr)$, we set $v(x)=r^{3/4}u(r^{-1/2}x)$. Then
    \[
    I(v;\beta,c)=\int_\rr u_{xx}^2-r\beta u_x^2+r^2(1-c^2)\dd x=I(u;r\beta,\sqrt{1-r^2(1-c^2)})
    \]
and $K(v)=r^\frac{3p+5}{4}K(u)$. If we then suppose $v$ achieves the minimum $m(\beta,c)$ it follows that
    \[
    m(\beta,c)=\frac{I(v;\beta,c)}{K(v)^{2/(p+1)}}=r^{-\frac{3p+5}{2(p+1)}}\frac{I(u;r\beta,\sqrt{1-r^2(1-c^2)})}{K(u)^{2/(p+1)}}
    \geq r^{-\frac{3p+5}{2(p+1)}}m(r\beta,\sqrt{1-r^2(1-c^2)}).
    \]
By supposing that $u$ achieves the minimum $m(r\beta,\sqrt{1-r^2(1-c^2)})$ we obtain the reverse inequality, and the result then follows by the relation between $d$ and $m$.
\fim

\begin{remark}
This scaling property implies that all values of $d$ on any semi-ellipse $\beta=k\sqrt{1-c^2}$ with $k<2$ are determined by any single value of $d$ on that semi-ellipse.
\end{remark}

Setting $\beta=0$ and $r^2=(1-c^2)^{-1}$ in Theorem \ref{T:d_scaling} gives the following result.

\begin{corollary} When $\beta=0$, $d(c)=(1-c^2)^\frac{3p+5}{4(p-1)}d(0)$, and it follows that
\begin{enumerate}[(i)]
  \item If $p\geq9$, then $d''(c)<0$ for $c^2<1$,
  \item If $p<9$, then $d''(c)<0$ for $c^2<\frac{2(p-1)}{p+7}$ and $d''(c)>0$ for $c^2>{2(p-1)}{p+7}$.
\end{enumerate}
\end{corollary}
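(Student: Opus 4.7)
The plan is to treat this Corollary as a direct consequence of the scaling identity in Theorem~\ref{T:d_scaling}, reducing everything to a routine calculus exercise. First I would apply the identity with $\beta=0$ and the admissible choice $r=(1-c^2)^{-1/2}$ (admissible because $c^2<1$). This specialization gives $r\beta=0$ and $\sqrt{1-r^2(1-c^2)}=0$, so the identity becomes
\[
d(0,0)=(1-c^2)^{-\frac{3p+5}{4(p-1)}}\,d(0,c),
\]
which rearranges to the stated formula $d(c)=(1-c^2)^{\alpha}d(0)$ with $\alpha=\frac{3p+5}{4(p-1)}$.

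Next I would compute $d''(c)$ directly from this closed form. A straightforward differentiation yields
\[
d''(c)=-2\alpha\,d(0)\,(1-c^2)^{\alpha-2}\,\bigl[\,1+(1-2\alpha)c^2\,\bigr].
\]
Since $d(0)>0$, $\alpha>0$ and $(1-c^2)^{\alpha-2}>0$ throughout $|c|<1$, the sign of $d''(c)$ is opposite to that of the bracketed factor. A short arithmetic check shows $\alpha>\tfrac{1}{2}$ for every $p>1$, so $1-2\alpha<0$ and the bracket is positive precisely when $c^2<\tfrac{1}{2\alpha-1}$. The elementary simplification
\[
\frac{1}{2\alpha-1}=\frac{2(p-1)}{p+7}
\]
then identifies the critical speed.

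To finish, I would compare the threshold $\tfrac{2(p-1)}{p+7}$ with $1$. The inequality $\tfrac{2(p-1)}{p+7}\ge 1$ is equivalent to $p\ge 9$, so in case (i) the inequality $c^2<1$ already forces the bracket to be positive and hence $d''(c)<0$ everywhere. In case (ii), $p<9$, the threshold lies strictly inside $(0,1)$, and the sign of the bracket changes exactly at $c^2=\tfrac{2(p-1)}{p+7}$, yielding the two stated regimes.

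There is really no serious obstacle here; the only place one must be careful is checking that $\alpha>1/2$ for all relevant $p$ (so that the bracket is a genuinely decreasing function of $c^2$) and that the range of $r$ used in Theorem~\ref{T:d_scaling} actually permits the limiting choice $r=(1-c^2)^{-1/2}$ that collapses the ellipse to its endpoint $(0,0)$, which it does by the hypothesis $0<r\le (1-c^2)^{-1/2}$.
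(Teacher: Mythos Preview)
Your proposal is correct and follows exactly the route the paper indicates: the paper simply says ``Setting $\beta=0$ and $r^2=(1-c^2)^{-1}$ in Theorem~\ref{T:d_scaling} gives the following result'' without writing out the differentiation, and you have correctly filled in those calculus details, including the sign analysis via the factor $1+(1-2\alpha)c^2$ and the identification of the threshold $c^2=\tfrac{2(p-1)}{p+7}$.
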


\begin{figure}
  \begin{center}
    \scalebox{0.6}{\includegraphics{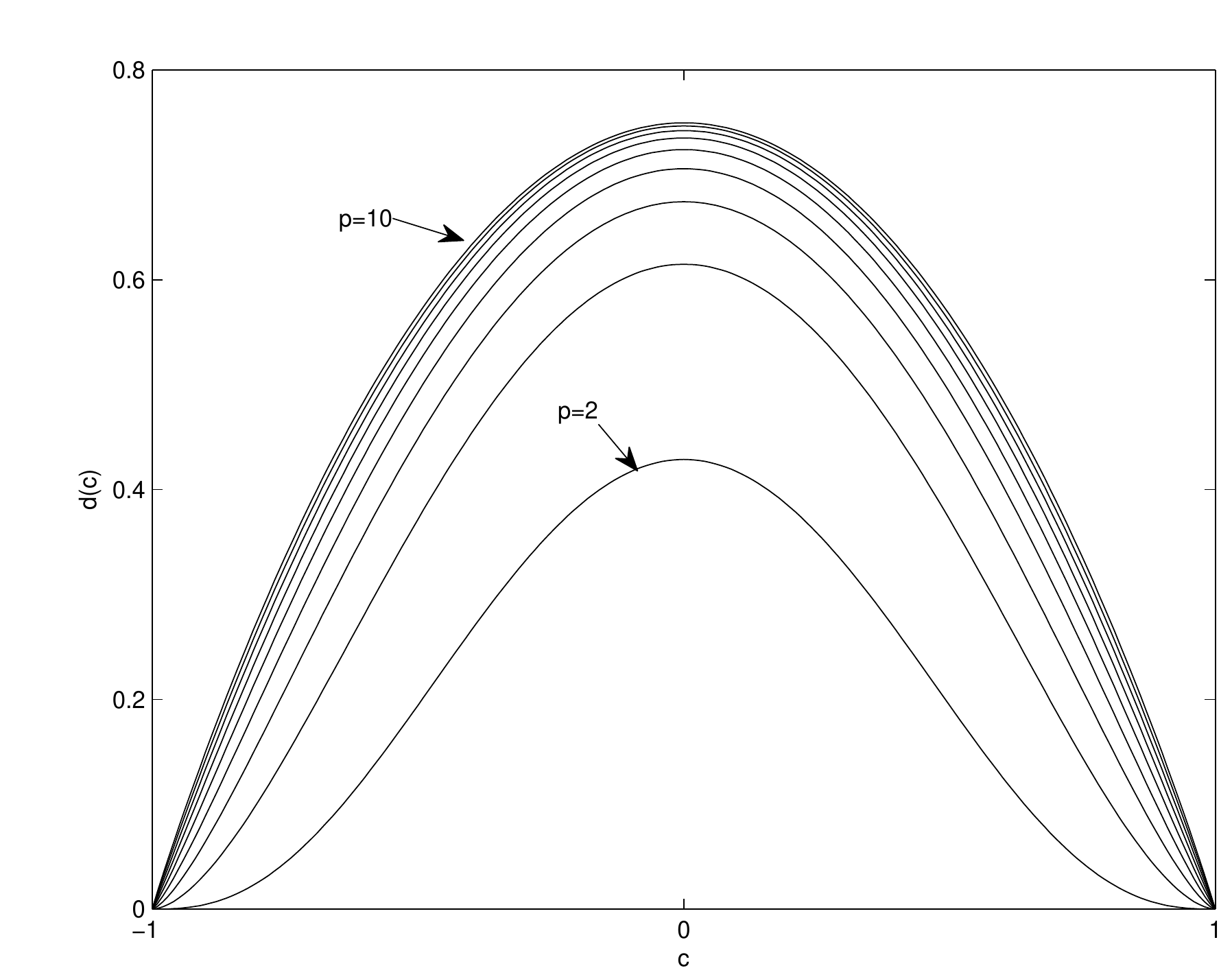}}
  \end{center}
  \caption{Plots of $d(c)$ for $\beta=0$ and $p=2,3,\ldots,10$. The values of $d(0)$ were found via the numerical methods described in Section \ref{S:numerical}.}
\end{figure}

\begin{theorem}\label{T:d_sign_change} Suppose $d$ is twice differentiable on its domain, and consider the curve
$\Gamma_k=\{(\beta,c):0\leq c<1,\beta=k\sqrt{1-c^2}\}$ for some $k<2$. Then $d_{cc}(\beta,c)$ changes sign at most once along $\Gamma_k$.
\end{theorem}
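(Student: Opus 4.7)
\quad
The plan is to exploit the scaling identity of Theorem~\ref{T:d_scaling} to reduce $d$ to essentially a one-variable function on $\Gamma_k$. Applying that identity with the particular choice $r=(1-c^2)^{-1/2}$ sends the point $(\beta,c)$ to $(\beta/\sqrt{1-c^2},0)$, so writing $s=\sqrt{1-c^2}$, $k=\beta/s$, and $\alpha=\tfrac{3p+5}{2(p-1)}$, I obtain the representation
\[
d(\beta,c)=s^{\alpha}\,g(k),\qquad g(k):=d(k,0),
\]
valid throughout the domain (the requirement $k<2$ is precisely $\beta<\beta_*$). The crucial feature is that on $\Gamma_k$ the quantity $k$ is constant, so only the factor $s^{\alpha}$ moves with $c$.

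Next I would compute $d_{cc}$ by the chain rule, using $\partial_c s=-c/s$ and $\partial_c k\big|_\beta=\beta c/s^3=kc/s^2$. A direct calculation produces
\[
d_c=c\,s^{\alpha-2}\,h(k),\qquad h(k):=k g'(k)-\alpha g(k),
\]
and differentiating once more with $\beta$ held fixed and collecting terms yields
\[
d_{cc}=s^{\alpha-2}h(k)+c^{2}s^{\alpha-4}\bigl[k h'(k)-(\alpha-2)h(k)\bigr].
\]
On $\Gamma_k$ the values $a:=h(k)$ and $b:=k h'(k)-(\alpha-2)h(k)$ are frozen constants; using the identity $s^{\alpha-2}=s^{\alpha-4}(1-c^2)$ the restriction of $d_{cc}$ to $\Gamma_k$ simplifies to
\[
F(c):=d_{cc}(k\sqrt{1-c^2},c)=s^{\alpha-4}\bigl[a+(b-a)c^{2}\bigr].
\]

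The conclusion then drops out: since $s^{\alpha-4}>0$ on $[0,1)$, the sign of $F(c)$ coincides with the sign of $a+(b-a)c^{2}$, which is linear in $c^{2}$ and therefore monotone in $c$ for $c\in[0,1)$. A monotone function of a real variable changes sign at most once, which proves the theorem. The computation is purely algebraic once the scaling reduction is in place; the only real conceptual step is recognizing that Theorem~\ref{T:d_scaling} furnishes a change of variables $(\beta,c)\mapsto(k,s)$ in which $d$ factors as $g(k)\,s^{\alpha}$ and the semi-ellipses $\Gamma_k$ become coordinate lines $k=\mathrm{const}$, so that the $c$-dependence of $d_{cc}$ along $\Gamma_k$ enters only through explicit powers of $s=\sqrt{1-c^2}$.
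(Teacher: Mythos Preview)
Your proposal is correct and follows essentially the same route as the paper's (first) proof: apply Theorem~\ref{T:d_scaling} with $r=(1-c^2)^{-1/2}$ to factor $d(\beta,c)=s^{\alpha}g(k)$, differentiate twice in $c$ with $\beta$ fixed, and observe that the restriction to $\Gamma_k$ is a positive power of $s$ times an expression linear in $c^2$. Your bookkeeping via the auxiliary function $h(k)=kg'(k)-\alpha g(k)$ is somewhat cleaner than the paper's, but the argument and the final formula $d_{cc}=s^{\alpha-4}\bigl[a+(b-a)c^2\bigr]$ are equivalent to the paper's expression (with $\alpha=2\gamma$, $k=\beta_0$, $g=d(\cdot,0)$); the paper also records a second proof based at an arbitrary point $(\beta_0,c_0)\in\Gamma_k$ with $c_0\neq0$.
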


\proof We present two proofs of this fact, both of which make use of the scaling property of $d$.
First, setting $r=(1-c^2)^{-1/2}$ in Theorem \ref{T:d_scaling} gives
    \[
    d(\beta,c)=(1-c^2)^\gamma d(\beta/\sqrt{1-c^2},0)
    \]
where $\gamma=\frac{3p+5}{4(p-1)}$. Equivalently, setting $s=\sqrt{1-c^2}$ we have
    \[
    d(\beta,\sqrt{1-s^2})=s^{2\gamma} d(\beta/s,0).
    \]
Differentiating once with respect to $s$ gives
    \[
    d_c(\beta,\sqrt{1-s^2})\cdot\frac{-s}{\sqrt{1-s^2}}=2\gamma s^{2\gamma-1}d(\beta/s,0)-\beta s^{2\gamma-2} d_\beta(\beta/s,0).
    \]
or equivalently
    \[
    d_c(\beta,\sqrt{1-s^2})=\sqrt{1-s^2}\left[-2\gamma s^{2\gamma-2}d(\beta/s,0)+\beta s^{2\gamma-3} d_\beta(\beta/s,0)\right].
    \]
Differentiating again with respect to $s$ then gives
    \[\begin{split}
    d_{cc}(\beta,\sqrt{1-s^2})\cdot\frac{-s}{\sqrt{1-s^2}}&=\frac{-s}{\sqrt{1-s^2}}\left[-2\gamma s^{2\gamma-2}d(\beta/s,0)+\beta s^{2\gamma-3} d_\beta(\beta/s,0)\right]\\
    &\qquad+\sqrt{1-s^2}\left[-2\gamma(2\gamma-2) s^{4\gamma-3}d(\beta/s,0)\right.\\
&    \left.\qquad+\beta(2\gamma-3)s^{\gamma-4} d_\beta(\beta/s,0)-\beta^2s^{2\gamma-5}d_{\beta\beta}(\beta/s,0)\right].
    \end{split}\]
Now denote $\beta_0=\beta/\sqrt{1-c^2}=\beta/s$. Then this becomes
    \[\begin{split}
    d_{cc}(\beta,c)&=-2\gamma s^{2\gamma-2}d(\beta_0,0)+\beta s^{2\gamma-3} d_\beta(\beta_0,0)\\
    &\quad+(1-s^2)\left[2\gamma(2\gamma-2) s^{2\gamma-4}d(\beta_0,0)-\beta(4\gamma-3)s^{2\gamma-5} d_\beta(\beta_0,0)+\beta^2s^{2\gamma-6}d_{\beta\beta}(\beta_0,0)\right]\\
    &=-2\gamma s^{2\gamma-2}d(\beta_0,0)+\beta_0 s^{2\gamma-2} d_\beta(\beta_0,0)\\
    &\quad+c^2\left[2\gamma(2\gamma-2) s^{2\gamma-4}d(\beta_0,0)-\beta_0(4\gamma-3)s^{2\gamma-4} d_\beta(\beta_0,0)+\beta_0^2s^{2\gamma-4}d_{\beta\beta}(\beta_0,0)\right]
    \end{split}\]
Simplification yields
     \[\begin{split}
    d_{cc}(\beta,c)&=s^{2\gamma-4}\left[2\gamma d(\beta_0,0)(c^2(2\gamma-1)-1)+\beta_0d_\beta(\beta_0,0)(1-(4\gamma-2)c^2)+c^2\beta_0^2d_{\beta\beta}(\beta_0,0)\right]\\
    &=s^{2\gamma-4}\left[c^2(2\gamma(2\gamma-1)d(\beta_0,0)-2(2\gamma-1)\beta_0d_\beta(\beta_0,0)+\beta_0^2d_{\beta\beta}(\beta_0,0))\right.\\
    &\left.\qquad
    +(\beta_0d_\beta(\beta_0,0)-2\gamma d(\beta_0,0))\right]
    \end{split}\]
Since the bracketed term is linear in $c^2$, this shows that $d_{cc}$ changes sign at most once on $\Gamma_k$, and the change of sign occurs when $c=\sqrt{P}$, where
    \[
    P=P(\beta_0,\gamma)=\frac{-\beta_0d_\beta(\beta_0,0)+2\gamma d(\beta_0,0)}{2\gamma(2\gamma-1)d(\beta_0,0)-2(2\gamma-1)\beta_0d_\beta(\beta_0,0)+\beta_0^2d_{\beta\beta}(\beta_0,0)}
    \]
provided $0<P<1$.

Alternately choose any point $(\beta_0,c_0)\in\Gamma_k$ with $c_0\neq0$. Then applying Theorem \ref{T:d_scaling} with $r=\beta_0/\beta$ gives
    \[
    d(\beta,c)=\left(\frac{\beta}{\beta_0}\right)^q d(\beta_0,c_0)
    \]
where $q=\frac{3p+5}{2(p-1)}$. Differentiating twice with respect to $c$ and using the relation
$c_0=\sqrt{1-\beta_0^2(1-c^2)/\beta^2}$ we have
    \[
    d_{cc}(\beta,c)=\frac1{c_0^3(1-c_0^2)}\left(\frac{\beta}{\beta_0}\right)^{q-4}
    \left[(1-c_0^2)c_0c^2d_{cc}(\beta_0,c_0)+(c_0^2-c^2)d_c(\beta_0,c_0)\right].
    \]
The term outside the brackets is positive, while the bracketed term is linear in $c^2$ and therefore can change sign at most once for $0<c<1$. The change of sign occurs when $c=\sqrt{P}$, where
    \[
    P=P(c_0)=\frac{c_0^2d_c(\beta_0,c_0)}{(c_0^2-1)c_0d_{cc}(\beta_0,c_0)+d_c(\beta_0,c_0)}
    \]
provided $0<P<1$.
\fim

\begin{remark} Theorem \ref{T:d_sign_change} does {\em not} imply that for $\beta$ fixed $d_{cc}$ has at most one sign change as $c$ varies. Indeed, when $p=4$ there exist $\beta$ for which $d_{cc}$ changes sign three times as $c$ varies from $0$ to $c_\ast$. See Figure \ref{F:d_cc_sign_change}.
\end{remark}

\begin{figure}
  \begin{center}
    \scalebox{0.5}{\includegraphics{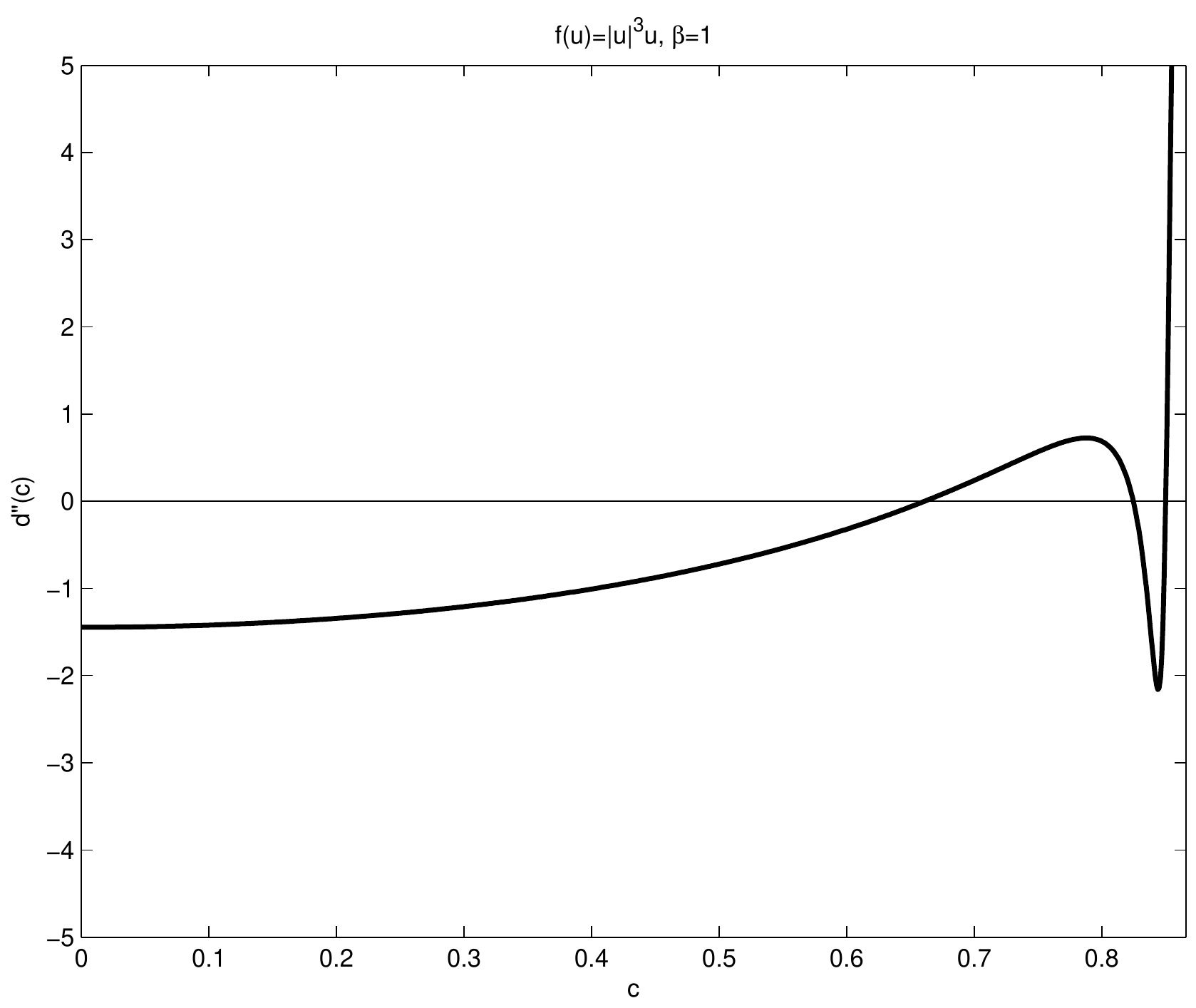}}
  \end{center}
  \caption{When $f(u)=|u|^3u$ and $\beta=1$, the sign of $d''(c)$ changes sign three times.}\label{F:d_cc_sign_change}
\end{figure}

\section{Numerical Results}\label{S:numerical}

In this section we present numerical calculations of $d$ and its derivatives for the nonlinearities $f(u)=|u|^{p}$ and $f(u)=|u|^{p-1}u$ for several values of $p$. These results illustrate precisely the regions in the $(\beta,c)$-plane where $d_{cc}$ is positive and negative, hence where the solitary waves are stable or unstable.

The method consists of numerically computing a solitary wave $\ff$ for given $(\beta,c)$ and using the relations
    \begin{align*}
      d(\beta,c)&=\frac{p-1}{2(p+1)}K(\ff)\\
      d_\beta(\beta,c)&=-\frac12\int_\rr\ff_x^2\dd x\\
      d_c(\beta,c)&=-c\int_\rr\ff^2\dd x
    \end{align*}
to compute $d$ and its first derivatives. By then doing this for several values of $(\beta,c)$ the second derivatives $d_{cc}$ and $d_{\beta\beta}$ may be calculated numerically. By the scaling relation in Theorem \ref{T:d_scaling}, it suffices to perform these calculations over the segments
\begin{align*}
  S_1&=\{(\beta,c):c=0, -1\leq\beta<2\}\\
  S_2&=\{(\beta,c):\beta=-1,0\leq c<1\},
\end{align*}
since for every $k<2$ the semi-ellipse $\Gamma_k=\{(\beta,c):0\leq c<1,\beta=k\sqrt{1-c^2}\}$ passes through either $S_1$ or $S_2$. The calculations in the proof of Theorem \ref{T:d_sign_change} may then be used to determine the locations where $d_{cc}$ changes sign.

\begin{figure}
  \begin{center}
    \scalebox{0.7}{\includegraphics{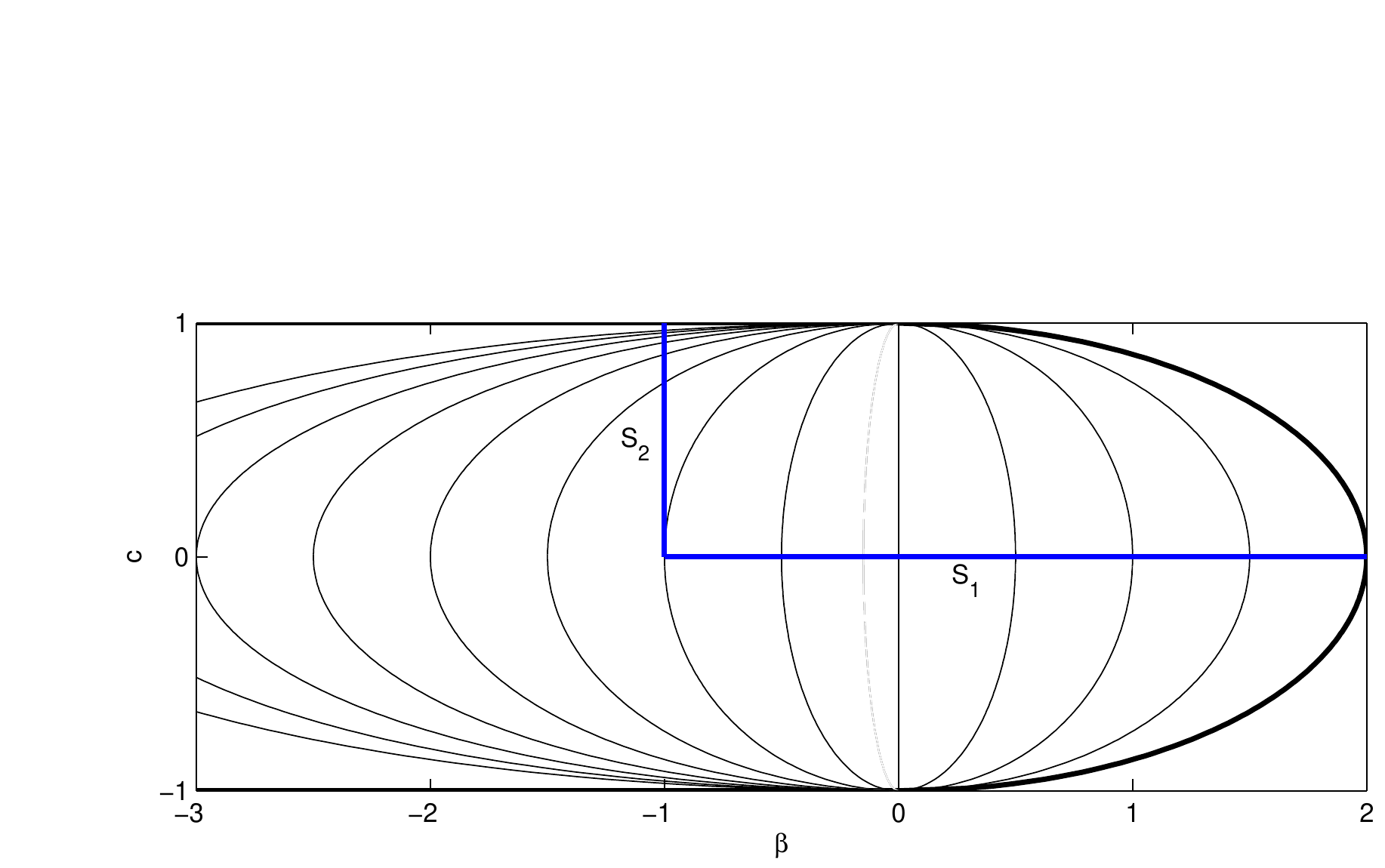}}
  \end{center}
  \caption{The domain of $d$, $\{(\beta,c):c^2<1,\beta<2\sqrt{1-c^2}\}$. Also shown are the semi-ellipses  $\Gamma_k$ along which the scaling relation determines the values of $d$, and the segments $S_1$ and $S_2$ along which the numerical calculations were performed.}\label{F:d_domain}
\end{figure}

To compute the solitary waves, the following spectral method due to Petviashvili. The Fourier transform of the solitary wave equation \eqref{E:B6_solitary} is
    \[
    (\xi^4-\beta\xi^2+(1-c^2))\hat{\ff}=\widehat{f(\ff)}
    \]
so we perform the iteration
    \[
    \hat{\ff}_{k+1}=M^{p/(p-1)}\frac{\widehat{f(\ff_k)}}{\xi^4-\beta\xi^2+(1-c^2)}
    \]
where the stabilizing factor $M$ is given by
    \[
    M=\frac{\int_\rr (\xi^4-\beta\xi^2+(1-c^2))|\hat{\ff}_k|^2\,d\xi}{\int_\rr\widehat{f(\ff_k)}\overline{\hat{\ff}_k}\,d\xi}.
    \]
The convergence properties of this method were studied in \cite{pelinovsky_stepanyants}, where it was shown that the exponent $\frac{p}{p-1}$ of the stabilizing factor $M$ yields the fastest rate of convergence. In the case of the nonlinearity $f(u)=u^p$ for integer $p$, there exist exact solutions of the form
    \[
    \ff(x)=\left(\frac{(p+3)(3p+1)}{8(p+1)}\right)^{\frac1{p-1}}
    \operatorname{sech}^{\frac4{p-1}}\left(\frac{p-1}{4(p+1)}\sqrt{-(p^2+2p+5)/\beta}x\right)
    \]
when $\beta=-\left(\frac{p+1}{2}+\frac2{p+1}\right)\sqrt{1-c^2}$ (\cite{dey_khare_kumar}). On the spatial domain $[-200,200]$ the numerically computed solitary waves very closely approximate the exact solutions,  with an $L^2$ error on the order of $10^{-6}$ after about 100 iterations using Gaussian initial data.

The results of these computations for the odd nonlinearity $f(u)=|u|^{p-1}u$ and even nonlinearity $f(u)=|u|^p$ are shown in Figures \ref{F:dcc_nodal_odd} and \ref{F:dcc_nodal_even}, respectively. Each curve corresponds to a different choice of the power $p$, and separates the domain $D^+$ into two regions
\begin{align*}
  D_u&=\{(\beta,c)\in D^+:d_{cc}(\beta,c)<0\}\\
  D_s&=\{(\beta,c)\in D^+:d_{cc}(\beta,c)>0\}.
\end{align*}
Since $d_{cc}(\beta,0)<0$ for all $\beta$, the region of unstable solitary waves, $D_u$, is the ``lower'' region that contains the $\beta$-axis, while the region of stable solitary waves, $D_s$, is the remaining region. Several observations may be made regarding the stable and unstable regions.
\begin{obs}\label{O:numerical_results}
\begin{enumerate}[(i)]
\item For $p<5$, the stable region $D_s$ is unbounded and for each fixed $\beta$ contains points $(\beta,c)$  near $(\beta,c_*)$, in agreement with the result of Corollary \ref{C:stability}.
\item For $p\geq5$, the stable region $D_s$ is bounded, and when $p>5$ appears to consist of the set of points interior to a smooth closed curve that passes through $(0,1)$.
\item For $p\geq12$, $D_s$ is empty.
\item For sufficiently large $p$, there exist $\beta$ such that $d_{cc}$ changes sign more than once as $c$ varies from $0$ to $c_*$.
\end{enumerate}
\end{obs}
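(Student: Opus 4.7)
The plan is to confirm observations (i)--(iv) by combining the already-established theoretical results with numerical data generated via the Petviashvili iteration of Section~\ref{S:numerical}, propagated across the domain $D^+$ using the scaling identity of Theorem~\ref{T:d_scaling}. Concretely, for each integer $p$ in the range of interest, I would compute a ground state $\ff$ at a dense sample of points along the one-dimensional segments $S_1$ and $S_2$, extract $d$, $d_\beta$, $d_c$ from the identities displayed at the start of Section~\ref{S:numerical}, and obtain $d_{cc}$ and $d_{\beta\beta}$ by centered finite differences. Every point of $D^+$ lies on exactly one semi-ellipse $\Gamma_k$, and each $\Gamma_k$ meets $S_1\cup S_2$ in a single point, so the explicit formula for $d_{cc}$ on $\Gamma_k$ derived in the proof of Theorem~\ref{T:d_sign_change} lifts the sign of $d_{cc}$ everywhere from the data on $S_1\cup S_2$.

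With these samples in hand, observation (i) is a direct consequence of Corollary~\ref{C:stability}, and the numerics serve only to illustrate the size of $D_s$ for $1<p<5$. For (ii) and (iii), for each $p$ I would plot the zero set $\{d_{cc}=0\}\subset D^+$ separating $D_s$ from $D_u$; by Theorem~\ref{T:d_sign_change} this locus meets each $\Gamma_k$ in at most one point, so it is a single curve in $D^+$. The numerical picture should then show that as $p$ increases past $5$ the curve closes off a bounded loop near $(0,1)$ which contracts with $p$ and vacates $D^+$ once $p\geq 12$. For (iv) I would sample $d_{cc}$ along vertical lines $\beta=\text{const}$ in $D^+$ for large $p$; such lines are transverse to the family $\Gamma_k$, so Theorem~\ref{T:d_sign_change} does not prevent multiple sign changes, and one expects to locate $\beta$ exhibiting this behaviour.

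The principal obstacle will be numerical accuracy, since $d_{cc}$ is a second difference of an integral computed from an iteratively solved nonlinear equation, and the ground state's exponentially decaying tails become slow near $c=c_*$ or $\beta=\beta_*$, so that truncation to a finite spatial interval may itself bias $d$. I would control this by benchmarking the iteration against the explicit $\operatorname{sech}^{4/(p-1)}$ solutions available on the curve $\beta=-\bigl(\tfrac{p+1}{2}+\tfrac{2}{p+1}\bigr)\sqrt{1-c^2}$, by performing convergence studies in both the spatial truncation and the $(\beta,c)$ step size, and by spacing the finite-difference nodes widely enough that the curvature signal exceeds the iteration residual. Continuation in $p$ would then be used to trace the topological transitions identified in (ii), (iii), (iv) in a robust way.
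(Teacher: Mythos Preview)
Your proposal is correct and mirrors the paper's own approach: the statement is presented in the paper as an \emph{Observation} drawn from numerical data rather than a proved theorem, and the methodology you outline---Petviashvili iteration on $S_1\cup S_2$, extraction of $d$, $d_c$, $d_\beta$ via the displayed integral identities, finite differencing for second derivatives, and propagation to all of $D^+$ through the scaling formula of Theorem~\ref{T:d_scaling} and the sign-change analysis of Theorem~\ref{T:d_sign_change}---is exactly the procedure described in Section~\ref{S:numerical}. The paper likewise anchors (i) to Corollary~\ref{C:stability} and notes (as you do for (iv)) that vertical lines $\beta=\mathrm{const}$ cut across the semi-ellipses $\Gamma_k$, so multiple sign changes in $c$ are not excluded; your additional remarks on benchmarking against the explicit $\operatorname{sech}^{4/(p-1)}$ solutions and on error control near the boundary are sensible refinements of what the paper sketches.
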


\begin{figure}
  \scalebox{0.9}{\includegraphics{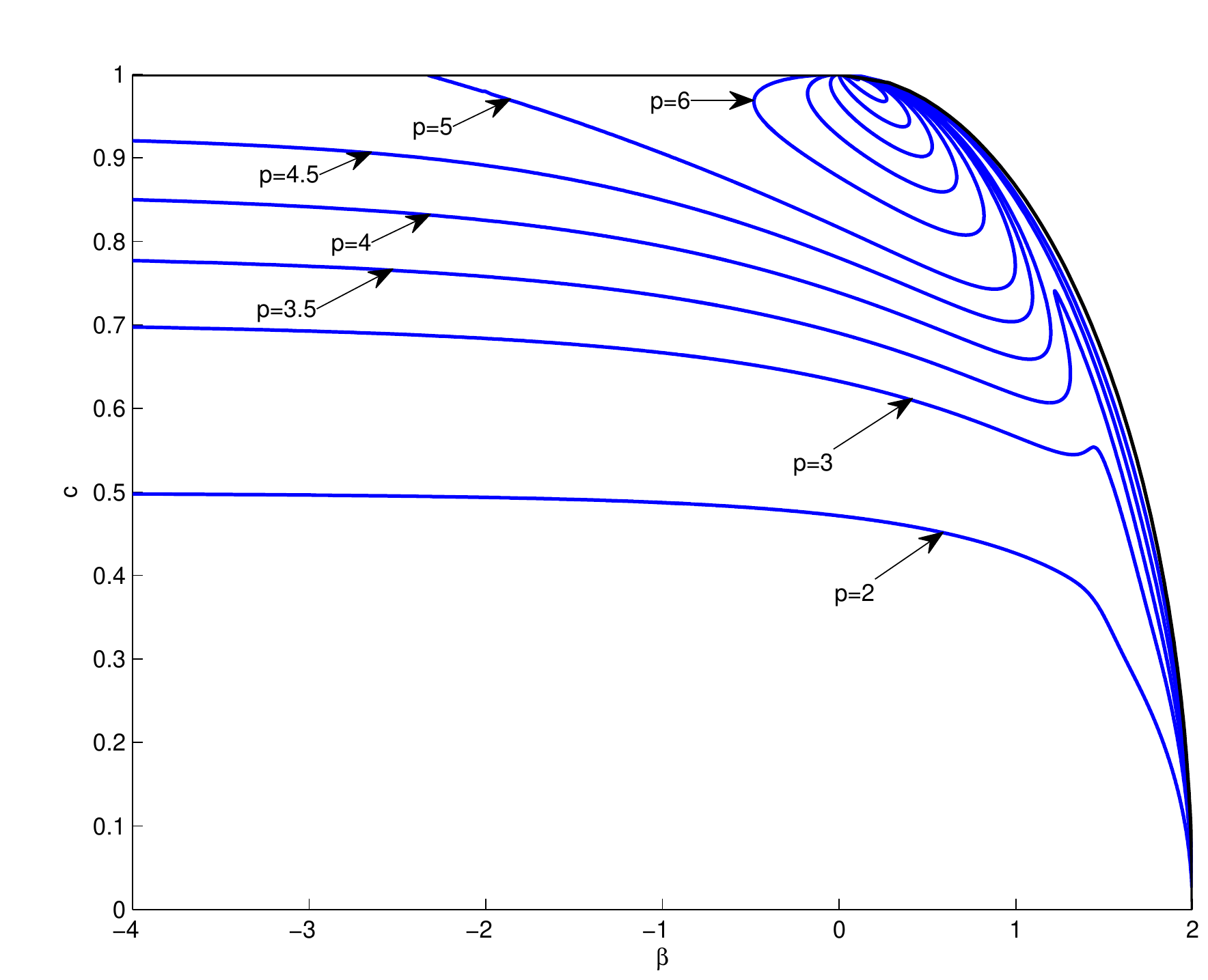}}
  \caption{Nodal sets of $d_{cc}$ for the odd nonlinearity $f(u)=|u|^{p-1}u$, for several values of $p$.}\label{F:dcc_nodal_odd}
\end{figure}

\begin{figure}
  \scalebox{0.9}{\includegraphics{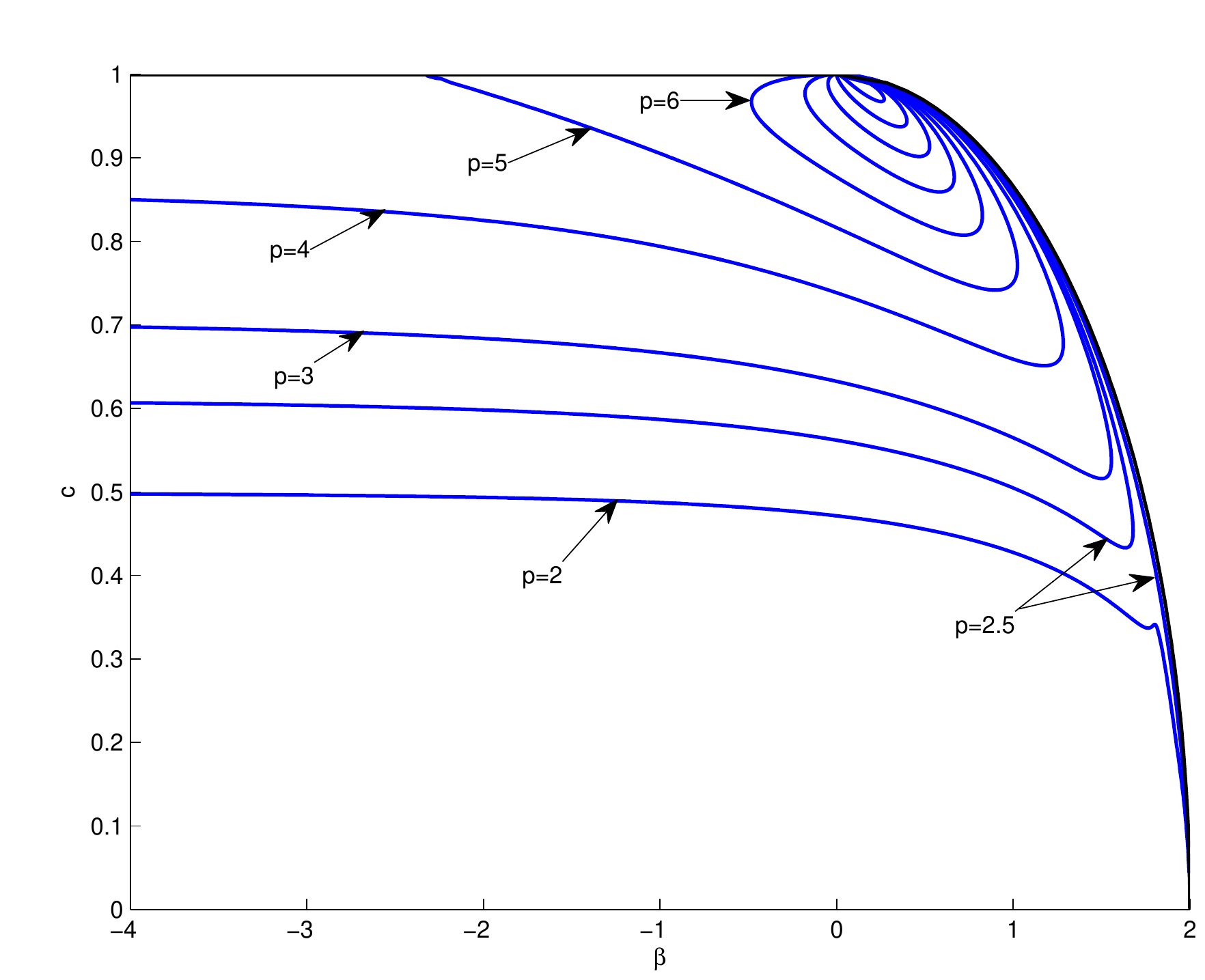}}
  \caption{Nodal sets of $d_{cc}$ for the even nonlinearity $f(u)=|u|^{p}$, for several values of $p$.}\label{F:dcc_nodal_even}
\end{figure}

\section*{}
\pdfbookmark[0]{References}{titr-1}


\begin{thebibliography}{00}
\bibitem{angulo-1} J. Angulo, On the instability of solitary waves solutions of the generalized Benjamin equation,
Adv. Differential Equations  8 (2003) 55–-82.

\bibitem{angulo-2} J. Angulo, On the instability of solitary wave solutions for fifith-order water wave models, Elec. J. Diff. Equations 2003 (2003) 1--18.


\bibitem{bona-sachs} J.L. Bona, R. Sachs, Global existence of smooth solutions and stability of solitary waves for a generalized Boussinesq equation, Comm. Math. Phys. 118 (1988) 15--29.

\bibitem{boussinesq} J. Boussinesq. Th\'{e}orie des ondes et des remous qui se propagent le long d'un canal rectangulaire
horizontal, en communiquant au liquide continu dans 21 ce canal des vitesses sensiblement pareilles de
la surface au fond, J. Math. Pures Appl.  17 (1872) 55-–108.

\bibitem{cmv} C. I. Christov, G. A. Maugin, M. G. Velarde, Well-posed Boussinesq paradigm with purely spatial higher-order derivatives, Phys. Rev. E 54 (1996) 3621–-3638.

\bibitem{dey_khare_kumar}  B. Dey, A. Khare, C. N. Kumar, Stationary solitons of the fifth order KdV-type equations and their stabilization, Phys. Lett. A,  223 (1996), no. 6, 449--452

\bibitem{EMOT} A. Erd\'{e}lyi, W. Magnus, F. Oberhettinger, F. Tricomi, Tables of integral transformd, Vol. v. 2, McGraw-Hill, New York, 1954.
\bibitem{esfahani_farah} A. Esfahani, L. G. Farah, Local well-posedness for the sixth-order Boussinesq equation, to appear in J. Math. Anal. Appl.

\bibitem{esfahani_farah_wang}  A. Esfahani, L. G. Farah, H. Wang, Global existence and blow-up for the generalized sixth-order Boussinesq equation, in prepration.

\bibitem{esfahani_levandosky} A. Esfahani, S. Levandosky, Solitary waves of the rotation-generalized Benjamin-Ono equation, preprint.

\bibitem{fls} F. Falk, E. Laedke,  K. Spatschek, Stability of solitary-wave pulses in shape-memory alloys, Phys. Rev. B 36 (1987) 3031-–3041.

\bibitem{ribeiro} J. Gon\c{c}alves Ribeiro, Instability of symmetric stationary states for some nonlinear Schr\"{o}dinger equations with an external magnetic field, Ann. Inst. H. Poincar\'{e}, Phys. Th\'{e}or. 54 (1991) 403--433.


\bibitem{gss} M. Grillakis, J. Shatah, W. Strauss, Stability theory of solitary waves in the presence of symmetry I and II, J. Funct. Anal. 74 (1987) 160--197; 94 (1990) 308--348.
\bibitem{kara-mc} P. Karageorgis, P. J. McKenna, The existence of ground states for fourth-order wave equations, Nonlinear Analysis 73 (2010) 367--373.



\bibitem{levandosky2} S. Levandosky, A Stability Analysis of Fifth-Order Water Wave Models, Physica D 125 (1999), 222–-240.

\bibitem{levandosky1} S. Levandosky, Stability of solitary waves of a fifth-order water wave model, Physica D 227 (2007) 162–-172.

\bibitem{liu1} Y. Liu, Instability of solitary waves for generalized Boussinesq equations, J. Dynamics Differential Equations 5 (1993) 537-558.

\bibitem{liu2} Y. Liu, Instability and blow-up of solutions to a generalized Boussinesq equation, SIAM J. Math. Anal. 26 (1995) 1527--1545.

\bibitem{liutom} Y. Liu,  M. M. Tom, Blow-up and instability of a regularized long-wave-KP equation Differential Integral Equations 19 (2003)  1131--1152.

\bibitem{pazy} A. Pazy, Semigroups of Linear Operators and Applications to Partial Differential
Equations, Springer-Verlag, 1983.

\bibitem{pelinovsky_stepanyants}  D. E. Pelinovsky, Y. A. Stepanyants, Convergence of Petviashvili's Iteration Method for Numerical Approximation of Stationary Solutions of Nonlinear Wave Equations, SIAM J. Numer. Anal.  42 (2004) 1110--1127.


\bibitem{segal} I. Segal, Non-linear Semi-groups, Ann. of Math. 78 (1963) 339--364.

\bibitem{stein} E. M. Stein, Oscillatory integrals in Fourier analysis, in : Beijing Lectures in Harmonic Analysis, Princeton Press, 1986, pp. 307--355.


\bibitem{zakharov} V. Zakharov, On stochastization of one-dimensional chains of nonlinear oscillators, Sov. Phys. JETP 38 (1974) 108--110.


\end{thebibliography}
\end{document}